\providecommand{\U}[1]{\protect\rule{.1in}{.1in}}
\newtheorem{theorem}{Theorem}
\theoremstyle{plain}
\newtheorem{corollary}{Corollary}
\newtheorem{definition}{Definition}
\newtheorem{example}{Example}
\newtheorem{lemma}{Lemma}
\newtheorem{proposition}{Proposition}
\numberwithin{equation}{section}
\begin{document}
\title[Stability of invariant measures]{Stability of invariant measures }
\author{Sini\v{s}a Slijep\v{c}evi\'{c}}
\address{Department of Mathematics, Bijeni\v{c}ka 30, Zagreb,\ Croata}
\email{slijepce@math.hr}
\urladdr{}
\date{January 4, 2008}
\subjclass[2000]{Primary 37B25; Secondary 37A05, 28C15}
\keywords{Invariant measure, weak* topology, optimal transport problem, exponential
stability, Liapunov stability, attractors}

\begin{abstract}
We generalize various notions of stability of invariant sets of dynamical
systems to invariant measures, by defining a topology on the set of
measures.\ The defined topology is similar, but not topologically equivalent
to weak* topology, and it also differs from topologies induced by the Riesz
Representation\ Theorem. It turns out that the constructed topology is a
solution of a limit case of a $p$-optimal transport problem, for $p=\infty$.

\end{abstract}
\maketitle

\section{Introduction}

The motivation for this paper is two-fold. The first motivation is related to
dynamical systems, and to finding the "right" topology on the set of measures,
namely a topology which replicates certain properties of a dynamical system on
a metric space to the induced dynamical system on the set of measures. The
second motivation is to investigate an alternative formulation of the
well-known Optimal transport problem. \vspace{2ex}

{\ }\textbf{The "right" topology on the set of measures. }Let $f$ be a
continuous function (i.e. a discrete dynamical system) on a metrizable
topological space $X$, and $f_{\sharp}$ be the induced function on the set of
Borel probability measures $P(X)$ on $X$. Then $f_{\sharp}$ is a discrete
dynamical system on $P(X)$. Study of the dynamical system $f_{\sharp}$ can
often give useful information on the system $f$, as was shown for example in
\cite{Gallay:01}, \cite{Slijep:00}. Now, a number of fundamental properties of
a dynamical system $f$ are not analogous to similar properties of the
dynamical system $f_{\sharp}$, in any of the standard topologies, such as the
weak* topology on $P(X)$. We give two examples.

Let $x\in X$ be a sink or a source of $f$, and let $\delta_{x}$ be the
probability measure supported on $\{x\}$. Then, typically, $\delta_{x}$ is
\textit{not} a sink or a source of $f_{\sharp}$ in the weak* topology. A
detailed discussion of this is in Section 5.

Now, let $A\subset X$ be a closed invariant set of $f$, and in addition an
attracting set. Let $A_{\sharp}$ be the set of all measures in $P(X)$ such
that its support is a subset of $A$. One can see (and we show in Section 5)
that $A_{\sharp}$ is an invariant set of $f_{\sharp}$, but is typically not an
attractor with respect to the weak* topology.

The same conclusions as in the last two examples hold for flows and semiflows.
They also hold for other usual topologies on the set of measures, induced by
topologies on the set $C^{\ast}(X)$ of all bounded linear functionals on the
set of continuous real valued functions $C(X)$, and the Riesz Representation Theorem.

Our goal is to find the "right" topology on the set of measures which would
naturally generalize various notions of stability and attraction to dynamical
systems on the set of measures. The topology should be close enough to the
weak* topology, so that one can find nontrivial compact sets of measures, and
use it in various applications.\ Such a topology constructed here is called
\emph{dynamical topology}. We will also show that, by identifying stable or
attracting sets of measures (with respect to the dynamical topology) rather
than sets of points, we get better insight and more information on behavior of
a chosen dynamical system.\vspace{2ex}

{\ \textbf{The $\infty$-optimal transport problem.}} Let $X$ be a metric space
with a metric $d$, and $\mu$, $\nu$ be two Borel probability measures on $X$.
Then the $p$-Wasserstein distance $W_{p}$, where $1\leq p<\infty$, is the
function
\begin{align}
C_{p}^{p}(\gamma)  & =\int_{X\times X}d(x,y)^{p}d\gamma(x,y),\label{e1:wass0}%
\\
W_{p}(\mu,\nu)  & =\inf\{C_{p}(\gamma)\,|\,\gamma\in T(\mu,\nu
)\}.\label{e1:wass}%
\end{align}
The set $T(\mu,\nu)$ is the set of all \textit{transports}; i.e. the set of
all Borel probability measures $\gamma$ on $X\times X$, such that
$\pi_{1\sharp}\gamma=\mu$, $\pi_{2\sharp}\gamma=\nu$, where $\pi_{1}$,
$\pi_{2}$ are projections of $X\times X$ to the first, resp. second variable
(transports are sometimes called "couplings" in the probability and some
ergodic theory literature).

The measure $\gamma$ which minimizes (\ref{e1:wass}) is a solution of the
optimal transport problem in the $p$-norm. Intuitively, the minimizer $\gamma$
is the measure which describes how the points in the support of $\mu$ are
coupled to the points in the support of $\nu$, so that the $p$-norm of the
coupling distances is minimal.

There is rich literature on the optimal transport problem, including proofs of
existence, uniqueness, and properties for various spaces $X$ and norms $1\leq
p<\infty$ (see e.g. \cite{Ambrosio:05}, \cite{Carlen:03}, \cite{Fragala:05},
\cite{Pratelli:05}). (The metric $d(x,y)$ can also be replaced by a more
general cost function $c(x,y)$.)

In this paper we study the case $p=\infty$. This case may have implications to
various optimization problems where the cost of transport does not depend on
mass to be transported, but only on the maximal transport distance.

For all $1\leq p<\infty$ the formula (\ref{e1:wass}) generates a metric on the
space of the probability measures $P(X)$, called p-Wasserstein metric. One can
show that p-Wasserstein metrics are for $1\leq p<+\infty$ uniformly equivalent
to each other and to the Prokhorov metric, and so generate the weak* topology
on $P(X)$ (see e.g. \cite{Gibbs:02}). We will show that they are neither
uniformly nor topologically equivalent to the metric and topology in the case
$p=+\infty$, and that the topology in the case $p=+\infty$ is the dynamical
topology defined in the first part of the paper.\vspace{2ex}

{\ \textbf{The structure of the paper.}} We start with definitions of
\emph{dynamical metric} and \emph{dynamical topology}, which are the main
tools of this paper. Then we show that the dynamical topology indeed differs
from the weak* topology, and from topologies induced by the Riesz
Representation\ Theorem. In the third section, we discuss various properties
of the dynamical topology. We focus on characterization of convergence of the
set of measures with respect to the dynamical topology in the fourth
section.\ The proof of characterization of convergence is combinatorial in
character (as such, it could have implications in the combinatorial ergodic
theory). We continue with proofs that the dynamical topology indeed gives
natural generalizations of various notions of stability to dynamical systems
on sets of measures. Finally we analyze the $\infty$-optimal transport
problem, and we show that the $\infty$-Wasserstein metric generated by the
$\infty$-optimal transport problem generates the dynamical topology on the
$P(X)$. We also prove existence of a solution of the $\infty$-optimal
transport problem.

\section{Definition of the dynamical topology}

In this paper $X$ is always a compact metric space, equipped with a Borel
$\sigma$-algebra into a measurable space, and $d$ is its metric. Let $M(X)$ be
the space of all finite Borel measures on $X$, and $P(X)\subset M(X)$ the
space of all probability measures. Let $C(X)$ be the normed (Banach) space of
all real valued continuous functions $f:X\rightarrow{\mathbb{R}}$. Then $M(X)$
can be naturally embedded in the dual space of all bounded linear functionals
on $C(X)$.

We denote by $\tau_{w}$ the weak* topology $M(X)$, and by $\tau_{u}$ the
uniform topology (i.e. the topology induced by the $\sup$-norm on the dual
$C^{\ast}(X)$ of $C(X)$). The topology $\tau_{u}$ is much finer than the
topology $\tau_{w}$, and as such is seldom used in the dynamical systems. We
will show that both topologies differ from the $\tau_{d}$ topology to be
constructed. We use the notation "w-", "u-", and "d-" ("d"\ for the
\emph{dynamical topology}, yet to be defined) when referring to properties of
a set or a sequence in various topologies. In particular, \emph{w-convergent},
\emph{u-convergent}, and \emph{d-convergent} means that a sequence of measures
in $P(X)$ is convergent with respect to weak*, uniform, or dynamical topology respectively.

We now define dynamical metric and topology on the set $P(X)$. Let $I$ be the
unit interval $[0,1]$, and $\lambda$ the Lebesgue measure defined on the
family of Borel-measurable subsets of $I$. Given two functions
$f,g:I\rightarrow X$, we define their distance as
\[
D(f,g)=\sup_{a\in I}d(f(a),g(a)).
\]
Distance $D$ is well defined because of compactness of $X$. It is
straightforward to check that $D$ is symmetric, and that it satisfies the
triangle inequality $D(f,g)+D(g,h)\geq D(f,h)$.

If $f:I\rightarrow X$ is a (Borel)\ measurable function, then $f_{\sharp
}\lambda=\mu$ denotes the measure $\mu(A)=\lambda(f^{-1}(A))$ for all
measurable $A\subseteq X$.

\begin{definition}
We define the distance between two probability measures $\mu,\nu$ as
$\Delta(\mu,\nu)=\inf D(f,g)$, where infimum goes over all measurable
functions $f,g:I\rightarrow X$, satisfying $\mu=f_{\sharp}\lambda$,
$\nu=g_{\sharp}\lambda$.
\end{definition}

We will now prove in several steps that $P(X)$ equipped with $\Delta$ is
indeed a metric space. We will use the following form (as in e.g.
\cite{Denker:76}, Proposition 2.17) of the well known isomorphism Theorem
(\cite{Halmos:50}, Theorem C of Section 41.):

\begin{theorem}
If $X$ is a compact metric space with a nonatomic Borel probability measure
$\mu$, then it is isomorphic (in the category of measure spaces)\ to the the
Lebesgue measure $\lambda$ on the family of Borel measurable subsets of $I$.
\end{theorem}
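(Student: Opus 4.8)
The plan is to establish the statement in its standard measure-theoretic form: an isomorphism means there are full-measure Borel subsets $X_{0}\subseteq X$ and $I_{0}\subseteq I$, together with a bijection $\phi:X_{0}\rightarrow I_{0}$ such that both $\phi$ and $\phi^{-1}$ are measurable and $\phi_{\sharp}\mu=\lambda$. I would construct $\phi$ explicitly as the limit of a refining dyadic tree of measurable partitions, rather than appeal to abstract classification results.

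First I would record two facts about $X$. Since $X$ is a compact metric space it is second countable, so there is a countable family $\{V_{k}\}$ of open sets that separates points and generates the Borel $\sigma$-algebra $\mathcal{B}(X)$. Since $\mu$ is nonatomic, the intermediate value property (Sierpi\'{n}ski) holds: for every measurable $A$ and every $0\leq t\leq\mu(A)$ there is a measurable $B\subseteq A$ with $\mu(B)=t$; in particular every atom can be split into two halves of equal measure.

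Next I would build partitions $\mathcal{P}_{1}\prec\mathcal{P}_{2}\prec\cdots$ with three properties: $\mathcal{P}_{n}$ consists of $2^{n}$ atoms each of measure $2^{-n}$; each atom of $\mathcal{P}_{n+1}$ is contained in an atom of $\mathcal{P}_{n}$; and the $\sigma$-algebra generated by $\bigcup_{n}\mathcal{P}_{n}$ equals $\mathcal{B}(X)$ modulo null sets. The equalization of masses is where nonatomicity enters, while separation of points is secured by interleaving: at stage $n$ I first cut every current atom along $V_{n}$ and its complement, then use the intermediate value property to subdivide further until each atom has the next dyadic mass. I then assign to the $2^{n}$ atoms of $\mathcal{P}_{n}$ the $2^{n}$ dyadic subintervals of $I$ of length $2^{-n}$, respecting the tree order, so that each $x\in X$ lies in a nested sequence of atoms whose intervals shrink to a single point $\phi(x)\in I$. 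Measurability of $\phi$ follows since preimages of dyadic intervals are atoms; measure preservation follows since $\lambda$ of each dyadic interval equals $2^{-n}$, the $\mu$-measure of the corresponding atom, and such intervals generate $\mathcal{B}(I)$. Injectivity modulo $0$ follows from separation of points, and after discarding the countably many dyadic endpoints and the null set where separation or injectivity fails, I obtain $X_{0}$ and $I_{0}$ of full measure with $\phi$ a measure-preserving bijection; measurability of $\phi^{-1}$ comes either from the tree structure directly or from the Lusin--Souslin theorem for injective Borel maps between Polish spaces.

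The hard part will be Step three: simultaneously demanding that the dyadic partitions have exactly equal masses $2^{-n}$ and that they generate the full Borel $\sigma$-algebra. Reconciling the exact dyadic balancing with the interleaved cutting along the separating family $\{V_{k}\}$, together with the careful modulo-$0$ bookkeeping at the end (controlling the dyadic endpoints and the exceptional null sets on both sides), is the delicate point; once the tree is built, verification of the metric-space isomorphism properties is routine.
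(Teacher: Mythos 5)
First, a point of reference: the paper does not prove this theorem at all. It is stated as a quotation of the classical isomorphism theorem, with explicit citations to Halmos (Theorem C of Section 41) and to Denker--Grillenberger--Sigmund (Proposition 2.17), and only its Corollary (existence of a measurable $f:I\rightarrow X$ with $f_{\sharp}\lambda=\mu$) is used later. So your proposal is not competing with an argument in the paper but with the standard literature proof, and your overall strategy --- a refining tree of measurable partitions mapped onto dyadic subintervals, with nonatomicity supplying the intermediate value property for splitting masses and a countable separating family $\{V_{k}\}$ supplying generation of the Borel $\sigma$-algebra and a.e.\ injectivity --- is exactly that standard proof in outline.

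However, the step you yourself flag as ``the hard part'' is, as literally described, broken, and you do not resolve it. You propose at stage $n$ to ``first cut every current atom along $V_{n}$ and its complement, then use the intermediate value property to subdivide further until each atom has the next dyadic mass.'' If $A$ is an atom of measure $2^{-n}$, the piece $A\cap V_{n}$ has some measure $a\in[0,2^{-n}]$ which is in general not a dyadic rational, and no further subdivision of $A\cap V_{n}$ into atoms of exactly dyadic mass can have union exactly $A\cap V_{n}$; so you cannot simultaneously keep $V_{n}$ as a union of atoms and keep all atoms of exactly dyadic measure. The standard repair is to give up exactness in one of the two places: either (a) keep the exact dyadic masses but only require each $V_{k}$ to be approximated in $\mu$-measure by a union of atoms of $\mathcal{P}_{n}$ to within $\varepsilon_{n}$ with $\sum_{n}\varepsilon_{n}<\infty$, which puts each $V_{k}$ in the $\mu$-completion of the generated $\sigma$-algebra and yields injectivity off a Borel null set; or (b) drop the exact-mass requirement, take $\mathcal{P}_{n}$ generated by $V_{1},\dots,V_{n}$, and assign to each atom an interval of length equal to its $\mu$-measure (nonatomicity plus a K\H{o}nig's-lemma argument then shows the maximal atom measure tends to $0$, so the assigned intervals still generate $\mathcal{B}(I)$). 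With either repair the rest of your outline (measure preservation on a generating family, Lusin--Souslin for $\phi^{-1}$, discarding null sets on both sides) goes through; without one of them, step three does not.
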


\begin{corollary}
\label{t:big}Suppose $\mu$ is a Borel probability measure on a compact space
$X$. Then there exists a Borel measurable function $f:I\rightarrow X$ such
that $\mu=f_{\sharp}\lambda$.
\end{corollary}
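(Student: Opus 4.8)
The plan is to reduce the general case to the isomorphism Theorem by separating the atomic and nonatomic parts of $\mu$. First I would recall that the set of atoms of any Borel probability measure on a metric space is at most countable, so I can write $\mu=\mu_{a}+\mu_{c}$, where $\mu_{a}=\sum_{i}p_{i}\delta_{x_{i}}$ is the purely atomic part (with $x_{i}$ the atoms and $p_{i}=\mu(\{x_{i}\})>0$) and $\mu_{c}$ is nonatomic. Set $p=\sum_{i}p_{i}$ and $q=1-p=\mu_{c}(X)$.

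Next I would partition $I=[0,1]$ into measurable pieces matching these masses: consecutive half-open intervals $J_{i}$ with $\lambda(J_{i})=p_{i}$ tiling $[0,p)$, and a remaining interval $K=[p,1]$ with $\lambda(K)=q$. On each $J_{i}$ I define $f$ to be the constant $x_{i}$, so that $(f|_{J_{i}})_{\sharp}(\lambda|_{J_{i}})=p_{i}\,\delta_{x_{i}}$. For the nonatomic part, assuming $q>0$, the normalized measure $\mu_{c}/q$ is a nonatomic Borel probability measure on the compact space $X$, so the isomorphism Theorem provides a measure isomorphism between $(X,\mu_{c}/q)$ and $(I,\lambda)$; in particular there is a Borel map $g:I\rightarrow X$, defined on a set of full $\lambda$-measure and extended arbitrarily elsewhere, with $g_{\sharp}\lambda=\mu_{c}/q$. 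Composing with the affine bijection $h:K\rightarrow I$ that rescales $K$ onto $[0,1]$, so that $h_{\sharp}(\lambda|_{K})=q\,\lambda$, and setting $f|_{K}=g\circ h$ yields $(f|_{K})_{\sharp}(\lambda|_{K})=q\cdot g_{\sharp}\lambda=\mu_{c}$. If $q=0$ this step is vacuous.

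Finally I would glue these pieces together. Since the $J_{i}$ and $K$ form a measurable partition of $I$ and $f$ is given by a Borel formula on each piece, $f$ is Borel measurable, and the pushforward is additive over the partition, giving $f_{\sharp}\lambda=\sum_{i}p_{i}\delta_{x_{i}}+\mu_{c}=\mu$. The only delicate points are that the isomorphism supplied by the Theorem is defined merely modulo null sets, so I must record that redefining $g$ on a $\lambda$-null set leaves $g_{\sharp}\lambda$ unchanged, and the bookkeeping required when there are infinitely many atoms, so that the interval lengths genuinely sum to $p$. The main obstacle is therefore not any single hard estimate but the correct separation into atomic and nonatomic parts together with the verification that the rescaled pushforward reproduces $\mu_{c}$ exactly; all the substantive content is carried by the isomorphism Theorem.
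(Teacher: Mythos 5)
Your proof is correct. The paper states this corollary without any written proof, and your argument --- splitting $\mu$ into its (at most countable) atomic part, realized by constant maps on intervals of matching length, and its nonatomic part, realized via the isomorphism Theorem after normalization and an affine rescaling of the remaining interval --- is exactly the standard derivation from the isomorphism Theorem that the paper leaves implicit, with the null-set and infinite-atom bookkeeping correctly acknowledged.
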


This implies that the infimum in the definition of $\Delta$ goes over a
nonempty set, hence $\Delta$ is well defined.

In the rest of the paper, "measurable" will always mean "Borel-measurable".

\begin{lemma}
\label{l:topology} Suppose $\mu\in P(X)$, and let $g_{1},g_{2}:I\rightarrow X$
be measurable functions, such that $\mu=g_{1\sharp}\lambda=g_{2\sharp}\lambda
$. Then for each $\varepsilon>0$, there exist measurable, $\lambda$-invariant
functions $h_{1},h_{2}:I\rightarrow I$ such that
\begin{equation}
D(g_{1}\circ h_{1},g_{2}\circ h_{2})\leq\varepsilon\text{.}\label{r:prop3}%
\end{equation}

\end{lemma}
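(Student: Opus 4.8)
The plan is to read ``$\lambda$-invariant'' as the measure-preserving condition $h_{i\sharp}\lambda=\lambda$, and to build $h_1,h_2$ that reparametrize $g_1,g_2$ so that corresponding points are forced into a common small piece of $X$. Since $X$ is compact, I would first fix a finite Borel partition $X=\bigsqcup_{j=1}^{n}B_j$ with $\operatorname{diam}B_j\le\varepsilon$, obtained by disjointifying a finite cover of $X$ by balls of radius $\varepsilon/2$. Writing $A_j^{(i)}=g_i^{-1}(B_j)$, the families $\{A_j^{(i)}\}_j$ are Borel partitions of $I$ for $i=1,2$, and since $g_{1\sharp}\lambda=g_{2\sharp}\lambda=\mu$ we have $\lambda(A_j^{(1)})=\mu(B_j)=\lambda(A_j^{(2)})=:m_j$.

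Next I would split $I$ into consecutive intervals $I_j=[s_{j-1},s_j)$ of length $m_j$, and on each $I_j$ with $m_j>0$ define $h_i|_{I_j}$ by the monotone rearrangement (inverse-distribution map) onto $A_j^{(i)}$: with $G_j^{(i)}(t)=\lambda(A_j^{(i)}\cap[0,t])$, which is continuous because $\lambda$ is nonatomic, set $h_i(a)=\inf\{\,t\in I : G_j^{(i)}(t)>a-s_{j-1}\,\}$ for $a\in I_j$. A routine computation shows this pushes $\lambda|_{I_j}$ forward to $\lambda|_{A_j^{(i)}}$; summing over $j$ gives $h_{i\sharp}\lambda=\sum_j\lambda|_{A_j^{(i)}}=\lambda$, so each $h_i$ is $\lambda$-invariant. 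Because the pushforward is concentrated on $A_j^{(i)}$, we get $h_i(a)\in A_j^{(i)}$ for almost every $a\in I_j$; hence for a.e.\ $a\in I_j$ both $g_1(h_1(a))$ and $g_2(h_2(a))$ lie in $B_j$, so $d(g_1(h_1(a)),g_2(h_2(a)))\le\operatorname{diam}B_j\le\varepsilon$.

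The main obstacle is that $D$ is a genuine supremum, not an essential supremum, so the almost-everywhere bound does not by itself yield (\ref{r:prop3}). The observation that resolves this is that altering $h_1,h_2$ on a $\lambda$-null set leaves their pushforwards, and hence their $\lambda$-invariance, unchanged. So I would let $N\subset I$ be the null set on which the alignment above fails, fix an index $j_0$ with $m_{j_0}>0$ together with points $t_1\in A_{j_0}^{(1)}$ and $t_2\in A_{j_0}^{(2)}$, and redefine $h_1\equiv t_1$, $h_2\equiv t_2$ on $N$. Then $g_1(h_1(a))=g_1(t_1)\in B_{j_0}$ and $g_2(h_2(a))=g_2(t_2)\in B_{j_0}$ for \emph{every} $a\in N$, so the $\varepsilon$-bound holds at every point of $I$ and $D(g_1\circ h_1,g_2\circ h_2)\le\varepsilon$. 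The only points left to check are the pushforward identity for the monotone rearrangement and the continuity of $G_j^{(i)}$, both of which are immediate from nonatomicity of $\lambda$.
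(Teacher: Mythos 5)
Your proof is correct, but it realizes the key matching differently from the paper. Both arguments begin identically: partition $X$ into Borel cells of diameter at most $\varepsilon$ and pair the $g_1$-preimage of each cell with its $g_2$-preimage, which have equal $\lambda$-measure because both functions push $\lambda$ forward to $\mu$. The paper then encodes this pairing as a measure $\nu_{\varepsilon}$ on $I\times I$ (a normalized sum of product measures on the rectangles $g_1^{-1}(C_i)\times g_2^{-1}(C_i)$), checks that both marginals equal $\lambda$, and invokes the measure isomorphism theorem (Corollary \ref{t:big}) to write $\nu_{\varepsilon}=h^{*}_{\sharp}\lambda$ with the range of $h^{*}$ inside the union of those rectangles; $h_1,h_2$ are the coordinates of $h^{*}$. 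You instead construct $h_1,h_2$ explicitly: reorder $I$ into consecutive blocks of lengths $\mu(B_j)$ and send each block onto $g_i^{-1}(B_j)$ by the inverse-distribution (monotone rearrangement) map. Your route is more elementary, using only nonatomicity of $\lambda$ rather than the isomorphism theorem, and it confronts head-on the point that $D$ is a genuine supremum rather than an essential supremum: the rearrangement lands in $A_j^{(i)}$ only almost everywhere, and your redefinition of $h_1,h_2$ on the exceptional null set (which alters neither measurability nor the pushforwards) is exactly the repair required to get (\ref{r:prop3}) pointwise. The paper's proof needs the same repair --- its assertion that $h^{*}$ has range literally contained in $Y_{\varepsilon}$, not merely $\nu_{\varepsilon}$-almost surely, is stated without justification --- so your version is arguably the more careful one; what the paper's formulation buys is brevity and a coupling viewpoint that anticipates the transport plans of its final section.
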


\begin{proof}
Let $C_{1}$,$C_{2},...,C_{k}$ be any pairwise disjoint, measurable cover of
the support of $\mu$, such that each $C_{i}$ has diameter less than
$\varepsilon$ (such a cover exists because of compactness of $X$). Without
loss of generality we also assume that for all $i$, $\mu(C_{i})>0$. We define
a set $Y_{\varepsilon}\subseteq I\times I$, and a Borel probability measure
$\nu_{\varepsilon}$ on $Y_{\varepsilon}$, with%
\begin{align*}
Y_{\varepsilon,i}  & =g_{1}^{-1}(C_{i})\times g_{2}^{-1}(C_{i}),\quad
Y_{\varepsilon}=%
{\textstyle\bigcup_{i=1}^{k}}
Y_{\varepsilon,i},\\
\nu_{\varepsilon}(A)  & =%
{\textstyle\sum_{i=1}^{k}}
\lambda^{2}(A\cap Y_{\varepsilon,i})/\mu(C_{i})\text{,}%
\end{align*}
where $\lambda^{2}$ is the Lebesgue measure on $I\times I$. Since $\lambda
^{2}(Y_{\varepsilon,i})=\lambda(g_{1}^{-1}(C_{i}))\cdot\lambda(g_{2}%
^{-1}(C_{i}))=\mu(C_{i})^{2}$, one can easily check that $\nu_{\varepsilon}$
is a probability measure. By definition, for any $a\in Y_{\varepsilon}$,
\begin{equation}
|g_{1}(\pi_{1}(a))-g_{2}(\pi_{2}(a))|\leq\varepsilon\text{,}\label{r:prop1}%
\end{equation}
where $\pi_{1},\pi_{2}:I^{2}\rightarrow I$ are coordinate projections. For any
measurable $A\subseteq I$, $\nu_{\varepsilon}(A\times I)=%
{\textstyle\sum_{i=1}^{k}}
\lambda(A\cap g_{1}^{-1}(C_{i}))\cdot\lambda(g_{2}^{-1}(C_{i}))/\mu(C_{i})=%
{\textstyle\sum_{i=1}^{k}}
\lambda(A\cap g_{1}^{-1}(C_{i}))=\lambda(A)$, and similarly $\nu_{\varepsilon
}(I\times A)=\lambda(A)$, hence%
\begin{equation}
\pi_{1\sharp}\nu_{\varepsilon}=\pi_{2\sharp}\nu_{\varepsilon}=\lambda
\text{.}\label{r:prop2}%
\end{equation}

By using Corollary \ref{t:big}, we find a measurable function $h^{\ast
}:I\rightarrow I^{2}$ such that $\nu_{\varepsilon}=h_{\sharp}^{\ast}\lambda$,
and $\nu_{\varepsilon}(I)\subseteq Y_{\varepsilon}$. Now (\ref{r:prop1}) and
(\ref{r:prop2}) imply that $h_{1}=\pi_{1}\circ h^{\ast}$, $h_{2}=\pi_{2}\circ
h^{\ast}$ are the required functions.
\end{proof}

\begin{proposition}
\label{p:topology} The function $\Delta$ is a metric on $P(X)$.
\end{proposition}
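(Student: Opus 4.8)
The plan is to verify the four defining properties of a metric in turn---finiteness, symmetry, the identity of indiscernibles, and the triangle inequality---reserving the last as the substantive step. Finiteness and symmetry are immediate. Since $X$ is compact, $\operatorname{diam}(X)<\infty$, so $D(f,g)\le\operatorname{diam}(X)$ for any representing functions, and by Corollary \ref{t:big} such functions exist; hence $0\le\Delta(\mu,\nu)\le\operatorname{diam}(X)$ is a well-defined finite number. Symmetry of $\Delta$ follows at once from the symmetry of $D$. If $\mu=\nu$, choosing $f=g$ with $f_{\sharp}\lambda=\mu$ gives $D(f,g)=0$, so $\Delta(\mu,\mu)=0$.

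For the converse half of the identity of indiscernibles, suppose $\Delta(\mu,\nu)=0$. For each $n$ I would pick measurable $f_{n},g_{n}$ with $\mu=f_{n\sharp}\lambda$, $\nu=g_{n\sharp}\lambda$ and $D(f_{n},g_{n})\le 1/n$. Given a continuous $\varphi:X\to\mathbb{R}$, compactness of $X$ makes $\varphi$ uniformly continuous, so $\sup_{a}|\varphi(f_{n}(a))-\varphi(g_{n}(a))|\to 0$. Since $\int_{X}\varphi\,d\mu=\int_{I}\varphi\circ f_{n}\,d\lambda$ and $\int_{X}\varphi\,d\nu=\int_{I}\varphi\circ g_{n}\,d\lambda$, passing to the limit yields $\int_{X}\varphi\,d\mu=\int_{X}\varphi\,d\nu$ for every continuous $\varphi$, whence $\mu=\nu$.

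The triangle inequality $\Delta(\mu,\rho)\le\Delta(\mu,\nu)+\Delta(\nu,\rho)$ is the main obstacle, and it is precisely here that Lemma \ref{l:topology} is needed. Fix $\varepsilon>0$ and choose $f,g_{1}$ representing $\mu,\nu$ with $D(f,g_{1})\le\Delta(\mu,\nu)+\varepsilon$, and $g_{2},h$ representing $\nu,\rho$ with $D(g_{2},h)\le\Delta(\nu,\rho)+\varepsilon$. The difficulty is that the two representations $g_{1},g_{2}$ of the common measure $\nu$ need not coincide, so the triangle inequality for $D$ cannot be applied directly to $f$ and $h$. Lemma \ref{l:topology} resolves this: it supplies $\lambda$-preserving maps $\phi_{1},\phi_{2}:I\to I$ with $D(g_{1}\circ\phi_{1},g_{2}\circ\phi_{2})\le\varepsilon$. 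Because $\phi_{1},\phi_{2}$ preserve $\lambda$, the functions $f\circ\phi_{1}$ and $h\circ\phi_{2}$ again represent $\mu$ and $\rho$, and precomposition never increases $D$, so $D(f\circ\phi_{1},g_{1}\circ\phi_{1})\le D(f,g_{1})$ and $D(g_{2}\circ\phi_{2},h\circ\phi_{2})\le D(g_{2},h)$.

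Applying the triangle inequality for $D$ along the chain $f\circ\phi_{1}$, $g_{1}\circ\phi_{1}$, $g_{2}\circ\phi_{2}$, $h\circ\phi_{2}$ then gives
\[
\Delta(\mu,\rho)\le D(f\circ\phi_{1},h\circ\phi_{2})\le\Delta(\mu,\nu)+\Delta(\nu,\rho)+3\varepsilon,
\]
and letting $\varepsilon\to 0$ completes the argument. The whole content of the triangle inequality is thus transferred to the reparametrization statement of Lemma \ref{l:topology}, which is what makes the common measure $\nu$ representable by two functions that are simultaneously uniformly close while retaining compatibility with the optimal representations of $\mu$ and $\rho$.
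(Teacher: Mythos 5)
Your proof is correct and follows essentially the same route as the paper's: the same uniform-continuity argument (implicitly via the Riesz Representation Theorem) for the identity of indiscernibles, and the same use of Lemma \ref{l:topology} together with the facts that precomposition does not increase $D$ and that $\lambda$-preserving reparametrizations preserve pushforwards, yielding the identical $3\varepsilon$ bound for the triangle inequality. No further comment is needed.
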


\begin{proof}
Since $D$ is symmetric, so is $\Delta$. The claim $\Delta(\mu,\mu)=0$ is trivial.

Now suppose that $\Delta(\mu,\nu)=0$ for $\mu\neq\nu$. Choose any $h\in C(X)$,
and $\varepsilon>0$. Because of compactness of $X$, $h$ is uniformly
continuous, and there exists $\delta>0$ such that $d(x,y)<\delta$ implies
$|h(x)-h(y)|<\varepsilon$ for all $x,y\in X$. Now we find $f,g:I\rightarrow X$
such that $\mu=f_{\sharp}\lambda$, $\nu=g_{_{\sharp}}\lambda$, and
$D(f,g)<\delta$. Now
\begin{align*}
\left\vert \int_{X}hd\mu-\int_{X}hd\nu\right\vert  & =\left\vert \int
_{I}h(f(t))dt-\int_{I}h(g(t))dt\right\vert \\
& \leq\int_{I}|h(f(t))-h(g(t))|dt\ \leq\varepsilon.
\end{align*}
Since $\varepsilon$ and $h$ were arbitrary, we see that $\mu$, $\nu$ are
identical linear functionals on $C(X)$. Now the Riesz Representation Theorem
implies that $\mu=\nu$.

Finally, we prove the triangle inequality. Let $\eta,\mu,\nu\in P(X)$. Choose
arbitrary $\varepsilon>0$, and assume that $f_{1},g_{1},g_{2},f_{2}%
:I\rightarrow X$ are measurable functions such that $\eta=f_{1\sharp}\lambda$,
$\mu=g_{1\sharp}\lambda=g_{2\sharp}\lambda$, $\nu=f_{2\sharp}\lambda$, and
such that
\begin{equation}
\Delta(\eta,\mu)\geq D(f_{1},g_{1})-\varepsilon,\quad\Delta(\mu,\nu)\geq
D(g_{2},f_{2})-\varepsilon\text{.}\label{r:getit1}%
\end{equation}

Now we find $h_{1},h_{2}$ as in Lemma \ref{l:topology}. Note now that for
arbitrary functions $f^{\ast},g^{\ast}:I\rightarrow X$, $h^{\ast}:I\rightarrow
I$, $D(f^{\ast},g^{\ast})\geq D(f^{\ast}\circ h^{\ast},g^{\ast}\circ h^{\ast
})$, so first applying\ that, then (\ref{r:prop3}), and finally the triangle
inequality for $D$ we get:%
\begin{align}
D(f_{1},g_{1})+D(g_{2},f_{2})  & \geq D(f_{1}\circ h_{1},g_{1}\circ
h_{1})+D(g_{2}\circ h_{2},f_{2}\circ h_{2})\geq\nonumber\\
& \geq D(f_{1}\circ h_{1},g_{1}\circ h_{1})+(D(g_{1}\circ h_{1},g_{2}\circ
h_{2})-\varepsilon)+\nonumber\\
& +D(g_{2}\circ h_{2},f_{2}\circ h_{2})\nonumber\\
& \geq D(f_{1}\circ h_{1},f_{2}\circ h_{2})-\varepsilon\text{.}%
\label{r:getit4}%
\end{align}
Since $h_{1},h_{2}$ are $\lambda$-invariant, $(f_{1}\circ h_{1})_{\sharp
}\lambda=\eta$, $(f_{2}\circ h_{2})_{\sharp}\lambda=\nu$, hence
\begin{equation}
D(f_{1}\circ h_{1},f_{2}\circ h_{2})\geq\Delta(\eta,\nu)\label{r:getit3}%
\end{equation}

Combining (\ref{r:getit1}), (\ref{r:getit4}) and (\ref{r:getit3}) we conclude
that $\Delta(\eta,\mu)+\Delta(\mu,\nu)\geq\Delta(\eta,\nu)-3\varepsilon$.
Since $\varepsilon$ was arbitrary, $\Delta$ satisfies the triangle inequality.
\end{proof}

\begin{definition}
The topology on $P(X)$ induced by the metric $\Delta$ is called dynamical
topology. We denote it by $\tau_{d}$.
\end{definition}

\section{Properties of the dynamical topology}

We now compare different topologies on $P(X)$, and investigate elementary
properties of the dynamical topology.

\begin{proposition}
\label{p:sub} The weak* topology on $P(X)$ is coarser than $\tau_{d}$.
Equivalently, d-convergence implies w-convergence.
\end{proposition}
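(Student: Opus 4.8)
The plan is to show that $\Delta$-convergence is stronger than weak* convergence by a direct estimate, exploiting the already-established comparison between integration against a continuous function in the $\mu$-picture and in the $\lambda$-picture via representing functions. This is essentially the same computation that appeared in the proof of Proposition \ref{p:topology} when verifying that $\Delta(\mu,\nu)=0$ forces $\mu=\nu$; here I would make it quantitative and uniform in the sequence.

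First I would fix $h\in C(X)$ and $\varepsilon>0$, and use the uniform continuity of $h$ on the compact space $X$ to produce $\delta>0$ such that $d(x,y)<\delta$ implies $|h(x)-h(y)|<\varepsilon$. Suppose $\mu_{n}\to\mu$ in $\tau_{d}$, so that $\Delta(\mu_{n},\mu)\to 0$; then for all large $n$ we have $\Delta(\mu_{n},\mu)<\delta$. By the definition of $\Delta$ as an infimum, for each such $n$ I can choose representing functions $f_{n},g_{n}:I\to X$ with $\mu=f_{n\sharp}\lambda$, $\mu_{n}=g_{n\sharp}\lambda$, and $D(f_{n},g_{n})<\delta$, i.e. $d(f_{n}(a),g_{n}(a))<\delta$ for all $a\in I$. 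The key step is then the change-of-variables identity $\int_{X}h\,d\mu=\int_{I}h(f_{n}(a))\,da$ and $\int_{X}h\,d\mu_{n}=\int_{I}h(g_{n}(a))\,da$, which lets me write
\[
\left|\int_{X}h\,d\mu_{n}-\int_{X}h\,d\mu\right|
\leq\int_{I}\bigl|h(g_{n}(a))-h(f_{n}(a))\bigr|\,da\leq\varepsilon,
\]
since the integrand is pointwise below $\varepsilon$. As $h$ and $\varepsilon$ were arbitrary, $\int_{X}h\,d\mu_{n}\to\int_{X}h\,d\mu$ for every $h\in C(X)$, which is precisely weak* convergence.

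To phrase this as the topological statement that $\tau_{w}$ is coarser than $\tau_{d}$, I would argue at the level of subbasic weak* open sets: a subbasic neighborhood of $\mu$ has the form $U=\{\nu:|\int h\,d\nu-\int h\,d\mu|<\varepsilon\}$, and the computation above shows that the $\Delta$-ball of radius $\delta$ around $\mu$ is contained in $U$. Hence every weak* open set is $\tau_{d}$-open, giving the inclusion of topologies. Since the excerpt works with metric spaces and states the equivalent formulation in terms of sequential convergence, the sequential argument suffices, but I would note the subbasic-neighborhood version to make the topological comparison explicit.

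I do not expect a serious obstacle here; the only point requiring care is the selection of representing functions realizing the infimum up to the tolerance $\delta$, which is legitimate because $\Delta$ is defined as an infimum and Corollary \ref{t:big} guarantees the competing set is nonempty. The pushforward change-of-variables formula $\int_{X}h\,d(f_{\sharp}\lambda)=\int_{I}h\circ f\,d\lambda$ is standard and is exactly the identity already used silently in Proposition \ref{p:topology}, so no new machinery is needed. The converse inclusion is expected to fail — indeed the paper's stated purpose is that $\tau_{d}$ is strictly finer than $\tau_{w}$ — so I would not attempt it here.
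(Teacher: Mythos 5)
Your proof is correct and follows essentially the same route as the paper's: fix $h\in C(X)$, use uniform continuity on the compact space $X$, pick representing functions realizing the infimum within $\delta$, and bound $|\int h\,d\mu_n-\int h\,d\mu|$ by $\varepsilon$ via the pushforward change of variables. The additional remark about subbasic weak* neighborhoods is a harmless elaboration; no gap.
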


\begin{proof}
Assume that a sequence $\mu_{n}\in P(X)$ d-converges to a $\mu\in P(X)$.
Choose an arbitrary $f\in C(X)$, and $\varepsilon>0$. Since $X$ is compact,
$f$ is uniformly continuous, and there is $\delta>0$ such that $|x-y|<\delta$
implies $|f(x)-f(y)|<\varepsilon$. Now choose $n_{0}$ large enough such that
$\Delta(\mu_{n},\mu)<\delta/2$ for $n>n_{0}$, and for given $n$ choose
$g_{n},g$ such that $\mu_{n}=g_{n\sharp}\lambda$, $\mu=g_{\sharp}\lambda$, and
$D(g_{n},g)<\delta$. Now
\begin{align*}
|\mu_{n}(f)-\mu(f)|  & =\left\vert \int_{X}fd\mu_{n}-\int_{X}fd\mu\right\vert
=\left\vert \int_{I}(f\circ g_{n}-f\circ g)d\lambda\right\vert \leq\\
& \leq\int_{I}\left\vert f\circ g_{n}-f\circ g\right\vert d\lambda\leq\int
_{I}\varepsilon d\lambda\leq\varepsilon,
\end{align*}
therefore $\mu_{n}$ w-converges to $\mu$.
\end{proof}

The next simple example shows that the dynamical topology differs from both
the weak* and uniform topology on any nontrivial $X$ (i.e. $X$ with more than
one element). We will see that the dynamical topology refines the weak*
topology in a very different way than the uniform topology.

\begin{example}
Suppose that $\mu_{n}$ is a sequence of atomic measures, each supported on $k$
points $x_{i}^{n}\in X$, $i=1,...,k$, $n\in N$. We write%
\[
\mu_{n}=\sum_{i=1,...,k}p_{i}^{n}\delta(x_{i}^{n}),
\]
where $x_{i}^{n}\in X$, $p_{i}^{n}\geq0$, and $\sum_{i=1}^{k}p_{i}^{n}=1$.
Suppose now that $\mu_{n}$ $w$-converges to $\mu=\sum_{i=1,...,k}q_{i}%
\delta(y_{i})$. Without loss of generality we can assume that $p_{i}%
^{n}\rightarrow q_{i}$, and $x_{i}^{n}\rightarrow y_{i}$. It is easy to check
that $\mu_{n}$ is $u$-convergent if and only if for all $i$, $x_{i}^{n}$ is
eventually constant (i.e. there exists $n_{0}$ such that for all $n\geq n_{0}%
$, $x_{i}^{n}=y_{i}$).

On the other hand, one can check that the sequence$\,(\mu_{n})$ is
$d$-convergent, if and only if for all $i$, $p_{i}^{n}$ is eventually constant.

We also deduce that in this example $(\mu_{n})$ is at the same time $u$- and
$d$-convergent, if and only if it is eventually constant.
\end{example}

Proposition \ref{p:sub} and the Example above imply the following conclusion.

\begin{corollary}
\label{c:horprop} (i)\ If $X$ has at least two elements, then $\tau_{w}%
\subset\tau_{d}$, but not equal to it;

(ii) If $X$ is not a finite set, then $\tau_{d}\not \subset \tau_{u}$ and
$\tau_{u}\not \subset \tau_{d}$;
\end{corollary}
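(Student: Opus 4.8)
The strategy is to translate every claim into a statement about convergent sequences, using the elementary fact that if a topology $\sigma$ is coarser than a topology $\tau$ (that is, $\sigma\subseteq\tau$), then the identity map from $\tau$ to $\sigma$ is continuous, and hence every $\tau$-convergent sequence is $\sigma$-convergent. Taking the contrapositive, in order to \emph{disprove} an inclusion $\sigma\subseteq\tau$ it suffices to exhibit a single $\tau$-convergent sequence that fails to be $\sigma$-convergent. Combined with Proposition \ref{p:sub} and the convergence criteria recorded in the Example, this reduces the whole corollary to writing down three suitable sequences; no finer topological machinery (and no appeal to metrizability) is needed.

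For part (i), the inclusion $\tau_{w}\subseteq\tau_{d}$ is exactly the content of Proposition \ref{p:sub}. To see that it is strict it remains to show $\tau_{d}\not\subseteq\tau_{w}$, for which I would produce a w-convergent but not d-convergent sequence. Since $X$ has at least two elements, fix distinct $y_{1},y_{2}\in X$ and set $\mu_{n}=p_{1}^{n}\delta(y_{1})+p_{2}^{n}\delta(y_{2})$ with, say, $p_{1}^{n}=1/2+1/n$ and $p_{2}^{n}=1/2-1/n$, so that the weights converge to $1/2$ without being eventually constant. The atoms are fixed and the weights converge, so by the Example this sequence is w-convergent, yet by the Example it is not d-convergent.

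Part (ii) uses the same dictionary in both directions. For $\tau_{d}\not\subseteq\tau_{u}$ the very same sequence serves: its positions $x_{i}^{n}=y_{i}$ are (trivially) eventually constant, so by the Example it is u-convergent, while its weights are not eventually constant, so it is not d-convergent. For $\tau_{u}\not\subseteq\tau_{d}$ the roles of weights and positions are interchanged, and this is precisely where infiniteness of $X$ enters: an infinite compact metric space is sequentially compact, hence contains a sequence of pairwise distinct points $x^{n}\to y$ (choose distinct points, extract a convergent subsequence, and discard at most one term so that $x^{n}\neq y$). Setting $\mu_{n}=\delta(x^{n})$ and $\mu=\delta(y)$, the single weight is constantly $1$, so $\mu_{n}$ is d-convergent by the Example (indeed one checks directly that $\Delta(\delta(x^{n}),\delta(y))=d(x^{n},y)\to 0$), whereas its positions are not eventually constant, so $\mu_{n}$ is not u-convergent.

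Essentially all the substance here is borrowed from Proposition \ref{p:sub} and from the Example, both of which I may invoke; the corollary itself is bookkeeping built on the "coarser topology preserves convergence'' principle. The only genuine point requiring care is the extraction of a nonconstant convergent sequence of pairwise distinct points in an infinite compact metric space, and this is the sole place at which the hypothesis "$X$ is not a finite set'' is used, as opposed to merely "$X$ has at least two elements'' which suffices for part (i) and for the inclusion $\tau_{d}\not\subseteq\tau_{u}$.
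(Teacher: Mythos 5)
Your argument is essentially the paper's own: the paper proves this corollary by citing Proposition \ref{p:sub} for the inclusion $\tau_{w}\subseteq\tau_{d}$ and the preceding Example for the characterizations of $u$- and $d$-convergence of finitely supported atomic sequences, and your write-up simply makes the three witnessing sequences explicit (including the correct use of Proposition \ref{p:sub} to rule out $d$-convergence to any \emph{other} limit, and the extraction of a nonconstant convergent sequence of distinct points, which is where infiniteness of $X$ enters). The only cosmetic quibble is that with $p_{1}^{n}=1/2+1/n$ the term $n=1$ is not a probability measure, so one should start the sequence at $n=2$ (or use $1/(n+2)$).
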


Now we discuss w-connectedness and w-compactness of $P(X)$.

\begin{proposition}
\label{p:properties} (i) If $X$ is path-connected, then $P(X)$ is $\,$d-path connected.

(ii) If $X$ has at least two elements, than $P(X)$ with the dynamical topology
is not d-sequentially compact, and not d-compact.
\end{proposition}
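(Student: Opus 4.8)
For part (i), the plan is to connect two arbitrary measures by a displacement interpolation of their representations. Given $\mu,\nu\in P(X)$, use Corollary \ref{t:big} to fix measurable $f,g:I\rightarrow X$ with $\mu=f_{\sharp}\lambda$ and $\nu=g_{\sharp}\lambda$. For each $a\in I$ choose a path $\gamma_{a}:[0,1]\rightarrow X$ with $\gamma_{a}(0)=f(a)$, $\gamma_{a}(1)=g(a)$, and set $f_{s}(a)=\gamma_{a}(s)$, $\mu_{s}=(f_{s})_{\sharp}\lambda$. Since $\Delta$ is an infimum over representations, the assignment $h\mapsto h_{\sharp}\lambda$ is distance-nonincreasing from $D$ to $\Delta$, so $\Delta(\mu_{s},\mu_{s'})\leq D(f_{s},f_{s'})=\sup_{a}d(\gamma_{a}(s),\gamma_{a}(s'))$. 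Thus $s\mapsto\mu_{s}$ is a $\tau_{d}$-path from $\mu$ to $\nu$ as soon as the right-hand side tends to $0$ with $|s-s'|$.

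The main obstacle is precisely to make that supremum continuous in $s$: this forces the family $\{\gamma_{a}\}_{a\in I}$ to be equicontinuous (to have a uniform modulus of continuity), and the assignment $a\mapsto\gamma_{a}$ to be measurable, so that each $f_{s}$ is a legitimate measurable representation. I would secure both by routing through a fixed basepoint $x_{0}\in X$: using concatenation of paths together with the symmetry of $\Delta$, it suffices to connect every $\mu$ to $\delta_{x_{0}}$. I would then select the $\gamma_{a}$ with uniformly bounded length, parametrized proportionally to arc length, so that $d(\gamma_{a}(s),\gamma_{a}(s'))\leq L\,|s-s'|$ uniformly in $a$, and invoke a measurable selection theorem to obtain $a\mapsto\gamma_{a}$. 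Controlling the path lengths uniformly over the compact space $X$ is the delicate point of the whole argument.

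Part (ii) is the easy half, and I would prove it by exhibiting an infinite uniformly separated set, which a metric space cannot contain if it is sequentially compact. Fix two distinct points $x,y\in X$, put $\rho=d(x,y)>0$, choose distinct numbers $t_{n}\in[0,1]$ (for instance $t_{n}=\tfrac{1}{2}+\tfrac{1}{n+2}$), and set $\mu_{n}=(1-t_{n})\delta_{x}+t_{n}\delta_{y}$. Any representation of $\mu_{n}$ takes values in $\{x,y\}$ almost everywhere, with the $y$-preimage of measure $t_{n}$; hence for $m\neq n$ any two representations must disagree on a set of measure at least $|t_{m}-t_{n}|>0$, where they are at distance $\rho$, while they can be arranged to agree elsewhere. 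Therefore $\Delta(\mu_{m},\mu_{n})=\rho$ for all $m\neq n$, so $(\mu_{n})$ is $\rho$-separated and has no Cauchy, hence no $d$-convergent, subsequence; thus $P(X)$ is not $d$-sequentially compact. Since $(P(X),\Delta)$ is a metric space, compactness and sequential compactness coincide, so it is not $d$-compact either.
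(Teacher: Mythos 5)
Your part (ii) is correct, and it takes a slightly different (arguably cleaner) route than the paper. The paper uses the single sequence $\mu_{n}=\frac{1}{n}\delta_{x}+\frac{n-1}{n}\delta_{y}$, notes via Proposition \ref{p:sub} that the only possible d-limit of any subsequence is the w-limit $\delta_{y}$, and checks $\Delta(\mu_{n},\delta_{y})=d(x,y)$. You instead show your sequence is uniformly $\rho$-separated, which kills all Cauchy subsequences without any appeal to the comparison with the weak* topology; your computation of $\Delta(\mu_{m},\mu_{n})$ is sound (the disagreement set of any two representations has measure at least $|t_{m}-t_{n}|>0$, hence is nonempty, and $D$ is a pointwise supremum, so $D(f,g)\geq\rho$). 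Either argument finishes with metrizability to pass from sequential compactness to compactness.

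For part (i) your underlying idea --- interpolate the representing maps along a measurable, equicontinuous family of paths --- is exactly the paper's: the paper posits a measurable $g:I\times I\rightarrow X$ with $g(\cdot,0)=f$, $g(\cdot,1)=f'$ and $t\mapsto g(a,t)$ continuous for each fixed $a$, and declares $t\mapsto g(\cdot,t)_{\sharp}\lambda$ d-continuous. You are right to flag that pointwise continuity in $t$ is not enough (one needs $\sup_{a}d(g(a,t),g(a,t'))\rightarrow 0$, i.e.\ a modulus of continuity uniform in $a$) and that the selection $a\mapsto\gamma_{a}$ must be measurable; the paper glosses over both. However, your proposed repair does not close the gap: a compact path-connected metric space need not contain any nonconstant rectifiable path at all (the von Koch arc with its Euclidean metric is compact and homeomorphic to $[0,1]$, yet every nonconstant path in it has infinite length), so ``uniformly bounded length, parametrized proportionally to arc length'' is unavailable in the stated generality, and routing through a basepoint does not restore it. What is actually needed is a Borel-selectable family of connecting paths admitting a single modulus of continuity over all of $X\times X$; producing this for an arbitrary path-connected compactum (or adding a hypothesis such as a convex/length metric under which it holds) is the missing step. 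So as written your part (i) is a program with an acknowledged hole rather than a proof --- though, to be fair, it is essentially the same hole that the paper's own one-line construction leaves open.
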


\begin{proof}
(i) Suppose that $X$ is path-connected. Let $\mu$, $\mu^{\prime}$ be any two
measures in $P(X)$, and choose any measurable $f,f^{\prime}:I\rightarrow X$
such that $\mu=f_{\sharp}I$, $\mu^{\prime}=f_{\sharp}^{\prime}I$. Now, since
$X$ is path connected, there is a measurable function $g:I\times I\rightarrow
X$, such that $g(.,0)=f$, $g(.,1)=f^{\prime}$, and such that $t\mapsto g(a,t)$
is continuous for every $a$. Now the function $t\mapsto g(.,t)_{\sharp}%
\lambda$ is a d-continuous curve in $P(X) $, connecting $\mu$ and $\mu
^{\prime}$.

(ii) We construct the following simple example: choose two points $x\not =y$
in $X$, and the sequence of measures
\begin{equation}
\mu_{n}=\frac{1}{n}\delta_{x}+\frac{n-1}{n}\delta_{y},\label{examp}%
\end{equation}
where $\delta_{x}$, $\delta_{y}$ are atomic measures concentrated in $x$, $y$.
Now $\mu_{n}$ $w$-converges to $\mu=\delta_{y}$. Since $\Delta(\mu_{n}%
,\mu)=d(x,y)$, neither $\mu_{n}$ nor any subsequence of $\mu_{n}$ d-converge
to $\mu$. Proposition \ref{p:sub} implies that $\mu_{n}$ has no convergent
subsequence, hence $P(X)$ is not d-sequentially compact. Since $P(X)$ is
metrizable, $P(X)$ is not d-compact.
\end{proof}

We now develop several simple tools used in proofs later in the paper. For a
given set $J\subseteq I$ and measurable $f,g:I\rightarrow X$, we define
$D_{J}(f,g)=\sup_{a\in J}d(f(a),g(a))$ \ We denote the support of a measure
$\mu$ by supp$(\mu)$ .

\begin{lemma}
\label{l:topA}Suppose $J\subseteq I$ is a measurable set of full measure.

(i)$\ $For any measurable $f,g:I\rightarrow X,$ there exist measurable
$\widetilde{f},\widetilde{g}:I\rightarrow X$, such that $f_{\sharp}%
\lambda=\widetilde{f}_{\sharp}\lambda$, $g_{\sharp}\lambda=\widetilde
{g}_{\sharp}\lambda$, and $D(\widetilde{f},\widetilde{g})\leq D_{J}(f,g)$.

(ii) The distance $\Delta(\mu,\nu)=\inf D_{J}(f,g)$, where infimum goes over
all measurable functions $f,g:I\rightarrow X$, satisfying $\mu=f_{\sharp
}\lambda$, $\nu=g_{\sharp}\lambda$.
\end{lemma}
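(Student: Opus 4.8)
The plan is to deduce (ii) from (i), and to prove (i) via the elementary observation that altering a measurable function on a $\lambda$-null set does not change its pushforward. Write $N=I\setminus J$; since $J$ has full measure, $\lambda(N)=0$. The crucial point is that $f$ and $g$ may be far apart on $N$, inflating $D(f,g)$ even when $D_{J}(f,g)$ is small, so the goal of (i) is to perform surgery on $N$ that kills this contribution while preserving both pushforwards. Fix any point $x_{0}\in X$ and set
\[
\widetilde{f}(a)=\begin{cases} f(a), & a\in J,\\ x_{0}, & a\in N,\end{cases}
\qquad
\widetilde{g}(a)=\begin{cases} g(a), & a\in J,\\ x_{0}, & a\in N.\end{cases}
\]
Both are Borel measurable, being assembled from $f,g$ and the characteristic functions of the Borel sets $J$ and $N$.

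I would then verify the two required properties of (i). First, the pushforward invariance $\widetilde{f}_{\sharp}\lambda=f_{\sharp}\lambda$: for any Borel $A\subseteq X$ the preimages $\widetilde{f}^{-1}(A)$ and $f^{-1}(A)$ differ only within $N$, so their $\lambda$-measures agree, using $\lambda(N)=0$ together with $\lambda(J)=1$; the identical argument gives $\widetilde{g}_{\sharp}\lambda=g_{\sharp}\lambda$. Second, the distance bound: writing $r=D_{J}(f,g)$, for $a\in J$ one has $d(\widetilde{f}(a),\widetilde{g}(a))=d(f(a),g(a))\leq r$ by definition of the supremum over $J$, while for $a\in N$ one has $d(\widetilde{f}(a),\widetilde{g}(a))=d(x_{0},x_{0})=0\leq r$. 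Taking the supremum over $a\in I$ yields $D(\widetilde{f},\widetilde{g})\leq r=D_{J}(f,g)$, completing (i).

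For (ii), observe first that $D_{J}(f,g)\leq D(f,g)$ for every admissible pair $f,g$, since a supremum over $J$ cannot exceed the supremum over all of $I$; passing to infima gives $\inf D_{J}\leq\inf D=\Delta(\mu,\nu)$. For the reverse inequality I apply (i): given any admissible $f,g$, the surgered functions $\widetilde{f},\widetilde{g}$ have the same pushforwards, so $\Delta(\mu,\nu)\leq D(\widetilde{f},\widetilde{g})\leq D_{J}(f,g)$, and taking the infimum over all admissible $f,g$ yields $\Delta(\mu,\nu)\leq\inf D_{J}$. The two bounds combine to the asserted identity. There is no genuine obstacle here; the only step needing care is the pushforward-invariance verification, which is precisely where the full-measure hypothesis on $J$ is consumed, together with the routine (but necessary) check that the modified functions remain Borel measurable.
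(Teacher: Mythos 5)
Your proposal is correct and follows essentially the same route as the paper: redefining both functions to equal a fixed point of $X$ on the null complement of $J$, then deducing (ii) from the chain $D(\widetilde{f},\widetilde{g})\leq D_{J}(f,g)\leq D(f,g)$. The paper's own proof is a two-line version of exactly this argument; your write-up just makes the measurability and pushforward-invariance checks explicit.
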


\begin{proof}
(i)\ Choose any $x\in X$, and define $\widetilde{f}(a)=f(a)$, $\widetilde
{g}(a)=g(a)$ for $a\in J$, $\widetilde{f}(a)=\widetilde{g}(a)=x$ for
$a\not \in J $. (ii) It follows from $D(\widetilde{f},\widetilde{g})\leq
D_{J}(f,g)\leq D(f,g)$.
\end{proof}

\begin{lemma}
\label{l:simple}If $f,g:I\rightarrow X$ are measurable functions such that
$\mu=f_{\sharp}\lambda$, $\nu=g_{\sharp}\lambda$ for given $\mu,\nu\in P(X)$,
then there exist measurable functions $\widetilde{f},\widetilde{g}%
:I\rightarrow X,$ such that $\mu=\widetilde{f}_{\sharp}\lambda$,
$\nu=\widetilde{g}_{\sharp}\lambda$, $\widetilde{f}(I)\subseteq$supp$(\mu)$,
$\widetilde{g}(I)\subseteq$supp$(\nu)$, and such that $D(\widetilde
{f},\widetilde{g})\leq D(f,g)$.
\end{lemma}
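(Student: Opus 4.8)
The plan is to exploit the fact that $f$ and $g$ can take values outside the respective supports only on a $\lambda$-null set, and then to mimic the explicit modification from the proof of Lemma \ref{l:topA}(i), with the set $J$ chosen as the full-measure set on which both functions already behave as required.

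First I would set $N_{f}=f^{-1}(X\setminus\mathrm{supp}(\mu))$ and $N_{g}=g^{-1}(X\setminus\mathrm{supp}(\nu))$. Since $\mu=f_{\sharp}\lambda$, the definition of the pushforward gives $\lambda(N_{f})=\mu(X\setminus\mathrm{supp}(\mu))=0$, because the complement of the support is open of $\mu$-measure zero; likewise $\lambda(N_{g})=0$. Hence $J=I\setminus(N_{f}\cup N_{g})$ is a measurable set of full measure, and by construction $f(a)\in\mathrm{supp}(\mu)$ and $g(a)\in\mathrm{supp}(\nu)$ for every $a\in J$.

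Then I would carry out the modification of Lemma \ref{l:topA}(i), but select the "filler" values so that both support constraints are met at once. Fix any $a^{\ast}\in J$ (which exists since $\lambda(J)=1$), and put $x_{0}=f(a^{\ast})\in\mathrm{supp}(\mu)$, $y_{0}=g(a^{\ast})\in\mathrm{supp}(\nu)$. Define $\widetilde{f}(a)=f(a)$, $\widetilde{g}(a)=g(a)$ for $a\in J$, and $\widetilde{f}(a)=x_{0}$, $\widetilde{g}(a)=y_{0}$ for $a\notin J$. These functions are measurable, and since $\widetilde{f}=f$ and $\widetilde{g}=g$ off a null set, the pushforwards are unchanged: $\widetilde{f}_{\sharp}\lambda=\mu$, $\widetilde{g}_{\sharp}\lambda=\nu$. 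By the choice of $J$ and of $x_{0},y_{0}$ we also get $\widetilde{f}(I)\subseteq\mathrm{supp}(\mu)$ and $\widetilde{g}(I)\subseteq\mathrm{supp}(\nu)$.

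It then remains to bound $D(\widetilde{f},\widetilde{g})$. On $J$ we have $d(\widetilde{f}(a),\widetilde{g}(a))=d(f(a),g(a))\leq D_{J}(f,g)\leq D(f,g)$, while off $J$ the distance equals the constant $d(x_{0},y_{0})=d(f(a^{\ast}),g(a^{\ast}))\leq D_{J}(f,g)\leq D(f,g)$. Taking the supremum over $a\in I$ yields $D(\widetilde{f},\widetilde{g})\leq D(f,g)$, as required. The one delicate point, and the step I expect to be the main obstacle, is this last bound: the naive filler in Lemma \ref{l:topA}(i) uses a single point for both functions and need not respect the distance bound once $\widetilde{f}$ and $\widetilde{g}$ are forced into two distinct supports. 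This is resolved precisely by reading the filler values off a common point $a^{\ast}$ of the full-measure set $J$, which makes $d(x_{0},y_{0})\leq D(f,g)$ hold automatically.
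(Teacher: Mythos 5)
Your argument is correct and is essentially the paper's own proof: the paper also takes $J=f^{-1}(\mathrm{supp}(\mu))\cap g^{-1}(\mathrm{supp}(\nu))$, picks a single $t_{0}\in J$, and redefines both functions off $J$ by the values $f(t_{0})$, $g(t_{0})$, which is exactly your choice of $x_{0},y_{0}$ from a common point $a^{\ast}$. You merely make explicit the details the paper leaves implicit (that $J$ has full measure and that the filler pair respects the bound $D(\widetilde{f},\widetilde{g})\leq D(f,g)$).
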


\begin{proof}
Let $J=f^{-1}($supp$(\mu))\cap g^{-1}($supp$(\nu))$, and choose any $t_{0}\in
J$. Now we define $\widetilde{f}(t)=f(t)$, $\widetilde{g}(t)=g(t)$ for $t\in
J$; $\widetilde{f}(t)=f(t_{0})$, $\widetilde{g}(t)=g(t_{0})$ for $t\not \in J$.
\end{proof}

Among various (uniformly equivalent but not necessarily
equivalent)\ definitions of the Hausdorff metric $d_{H}$ in the literature, we
use the following as the most convenient here:\ if $A$, $B$ are two closed
subsets of $X$, then $d(x,A)=\inf\{d(x,y),y\in A\}$, $d(A,B)=\sup
\{d(x,B)\,|\,x\in A\}$, and $d_{H}(A,B)=\max\{d(A,B),d(B,A)\}$.

\begin{proposition}
\label{p:haus} (i) $d_{H}($supp$(\mu),$supp$(\nu))\leq\Delta(\mu,\nu)$;

(ii)\ If $\mu_{n}$ d-converges to $\mu$, then supp$(\mu_{n})$ converges to
supp$(\mu)$ in the Hausdorff topology.
\end{proposition}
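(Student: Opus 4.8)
The plan is to prove (i) directly from the definition of $\Delta$ together with Lemma \ref{l:simple}, and then read off (ii) as an immediate corollary. Fix $\varepsilon>0$ and choose measurable $f,g:I\rightarrow X$ with $\mu=f_{\sharp}\lambda$, $\nu=g_{\sharp}\lambda$, and $D(f,g)<\Delta(\mu,\nu)+\varepsilon$. By Lemma \ref{l:simple} I may replace $f,g$ by functions whose images lie in $\mathrm{supp}(\mu)$ and $\mathrm{supp}(\nu)$ respectively, without increasing $D(f,g)$; so I assume $f(I)\subseteq\mathrm{supp}(\mu)$ and $g(I)\subseteq\mathrm{supp}(\nu)$. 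The first ingredient is the elementary observation that $\mathrm{supp}(\mu)\subseteq\overline{f(I)}$: if $x\in\mathrm{supp}(\mu)$ and $U$ is any open neighbourhood of $x$, then $\lambda(f^{-1}(U))=\mu(U)>0$, so $f^{-1}(U)$ is nonempty and $U$ meets $f(I)$; thus $x\in\overline{f(I)}$.

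Given this, the core estimate is $d(\mathrm{supp}(\mu),\mathrm{supp}(\nu))\leq D(f,g)$. I would take any $x\in\mathrm{supp}(\mu)$, pick a sequence $a_{n}\in I$ with $f(a_{n})\rightarrow x$ (possible since $x\in\overline{f(I)}$), and note that each $g(a_{n})$ lies in $\mathrm{supp}(\nu)$ with
\[
d(x,\mathrm{supp}(\nu))\leq d(x,g(a_{n}))\leq d(x,f(a_{n}))+d(f(a_{n}),g(a_{n}))\leq d(x,f(a_{n}))+D(f,g).
\]
Letting $n\rightarrow\infty$ gives $d(x,\mathrm{supp}(\nu))\leq D(f,g)$, and taking the supremum over $x\in\mathrm{supp}(\mu)$ yields the claimed bound. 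The same argument with the roles of $\mu$ and $\nu$ exchanged bounds $d(\mathrm{supp}(\nu),\mathrm{supp}(\mu))$, so $d_{H}(\mathrm{supp}(\mu),\mathrm{supp}(\nu))\leq D(f,g)<\Delta(\mu,\nu)+\varepsilon$. Since $\varepsilon$ was arbitrary, (i) follows.

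Part (ii) is then immediate: if $\mu_{n}$ d-converges to $\mu$ then $\Delta(\mu_{n},\mu)\rightarrow0$, and (i) gives $d_{H}(\mathrm{supp}(\mu_{n}),\mathrm{supp}(\mu))\leq\Delta(\mu_{n},\mu)\rightarrow0$, which is exactly convergence of the supports in the Hausdorff topology.

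I expect the only real subtlety to sit in part (i): a point of $\mathrm{supp}(\mu)$ need not equal a value of $f$ but only be a limit of such values, so one must pass from the image $f(I)$ to its closure. The uniform bound $D(f,g)$ survives this limiting argument cleanly, so no genuine difficulty arises; the containment $g(I)\subseteq\mathrm{supp}(\nu)$ supplied by Lemma \ref{l:simple} is what makes $g(a_{n})$ an admissible competitor in $d(x,\mathrm{supp}(\nu))$. The infimum defining $\Delta$ need not be attained, which is why the $\varepsilon$ is carried throughout and only sent to zero at the end.
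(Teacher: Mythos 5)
Your proposal is correct and follows exactly the paper's route: choose $f,g$ with $D(f,g)<\Delta(\mu,\nu)+\varepsilon$, pass to $\widetilde{f},\widetilde{g}$ via Lemma \ref{l:simple}, bound $d_{H}(\mathrm{supp}(\mu),\mathrm{supp}(\nu))$ by $D(\widetilde{f},\widetilde{g})$, and deduce (ii) from (i). The only difference is that you spell out the step the paper dismisses with ``we easily check,'' namely the closure argument showing $\mathrm{supp}(\mu)\subseteq\overline{f(I)}$, which is a worthwhile elaboration but not a different proof.
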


\begin{proof}
(i)\ Suppose $\mu$, $\nu\in P(X)$. For any $\varepsilon>0$ there exist
measurable functions $f,g:I\rightarrow X$, such that $\mu=f_{\sharp}\lambda$,
$\nu=g_{\sharp}\lambda$, and such that $D(f,g)<\Delta(\mu,\nu)+\varepsilon$.
Using Lemma \ref{l:simple}, we construct $\widetilde{f} $, $\widetilde{g}$, as
in the Lemma. We easily check that $d_{H}($supp$(\mu),$supp$(\nu))\leq
D(\widetilde{f},\widetilde{g})\leq D(f,g)<\Delta(\mu,\nu)+\varepsilon$. Since
$\varepsilon$ was arbitrary, (i) is proved. (ii)\ follows directly from (i).
\end{proof}

The claim (ii) of the Proposition \ref{p:haus} is not true in the weak*
topology. The counter-example which is constructed in (\ref{examp}) is a
sequence of w-convergent measures $\mu_{n},$ converging to a measure $\mu$,
but such that supports of measures $\mu_{n}$ do not converge to the support of
the measure $\mu$.

Assume that a sequence of measures $\mu_{n}$ w-converges to a measure $\mu$,
and that the sequence of supports of measures $\mu_{n}$ converges in the
Hausdorff topology to the support of $\mu$. We can not then in general claim
that $\mu_{n}$ d-converges to $\mu$. A counter-example is the sequence
\[
\mu_{n}=\frac{n+1}{2n}\delta_{x}+\frac{n-1}{2n}\delta_{y}\text{,}%
\]
for $\delta_{x}$, $\delta_{y}$ as in (\ref{examp}). The same example shows
that we can find measures $\mu$, $\nu$, such that $d_{H}($supp$(\mu
),$supp$(\nu))=0$, but such that $\Delta(\mu,\nu)$ is arbitrarily large.

We now show that the dynamical topology is \textit{not} much finer than the
weak* topology.

\begin{proposition}
\label{p:dense}The set of all measures in $P(X)$ which are supported on a
finite set is d-dense.
\end{proposition}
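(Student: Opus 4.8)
The plan is to show that for every $\mu\in P(X)$ and every $\varepsilon>0$ there is a finitely supported measure $\nu$ with $\Delta(\mu,\nu)<\varepsilon$; since $\varepsilon$ is arbitrary, this yields density. The idea is to produce $\nu$ by \emph{discretizing} a function that represents $\mu$. First I would invoke compactness of $X$ to fix a finite, pairwise disjoint, measurable cover $C_{1},\dots,C_{k}$ of $X$ with $\operatorname{diam}(C_{i})<\varepsilon$ (the same kind of cover used in Lemma \ref{l:topology}), discarding empty pieces so that I may choose a representative point $x_{i}\in C_{i}$ for each $i$. Since the $C_{i}$ are disjoint, the chosen points $x_{1},\dots,x_{k}$ are distinct.

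Next, by Corollary \ref{t:big} I would pick a measurable $f:I\rightarrow X$ with $\mu=f_{\sharp}\lambda$, and define $g:I\rightarrow X$ by setting $g(a)=x_{i}$ whenever $f(a)\in C_{i}$. Because the $C_{i}$ are disjoint Borel sets covering $X$, the preimages $f^{-1}(C_{i})$ partition $I$ into measurable pieces, so $g$ is measurable (it is constant on each $f^{-1}(C_{i})$) and is well defined everywhere. The key estimate is immediate: for every $a\in I$ the points $f(a)$ and $g(a)=x_{i}$ lie in the same $C_{i}$, whence $d(f(a),g(a))\leq\operatorname{diam}(C_{i})$, and since there are finitely many pieces each of diameter $<\varepsilon$, we obtain $D(f,g)=\sup_{a\in I}d(f(a),g(a))<\varepsilon$.

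Finally, I would identify $\nu:=g_{\sharp}\lambda$. Because $g$ takes only the distinct values $x_{1},\dots,x_{k}$ with $g^{-1}(x_{i})=f^{-1}(C_{i})$, one computes $\nu=\sum_{i=1}^{k}\mu(C_{i})\,\delta_{x_{i}}$, which is supported on the finite set $\{x_{1},\dots,x_{k}\}$. Applying the definition of $\Delta$ to the admissible pair $(f,g)$ then gives $\Delta(\mu,\nu)\leq D(f,g)<\varepsilon$, completing the argument.

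As for difficulties, I do not expect a serious obstacle here: the construction is a direct quantitative use of compactness together with Corollary \ref{t:big}. The only points needing care are the measurability of $g$ (which follows at once since each $f^{-1}(C_{i})$ is Borel) and the bookkeeping identifying $g_{\sharp}\lambda$ as the stated atomic measure. Should one prefer to cover only $\operatorname{supp}(\mu)$ rather than all of $X$, the sets $f^{-1}(C_{i})$ would cover only a full-measure subset $J\subseteq I$, and one would finish instead with Lemma \ref{l:topA}(ii), using $\Delta(\mu,\nu)=\inf D_{J}(f,g)$ to discard the null set on which $f$ escapes the support.
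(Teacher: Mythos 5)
Your proposal is correct and follows essentially the same route as the paper's proof: the same finite disjoint cover of small diameter, the same representative points $x_{i}$, the same discretization $g$ of a function $f$ representing $\mu$, and the same identification $\nu=g_{\sharp}\lambda=\sum_{i}\mu(C_{i})\delta_{x_{i}}$ with $D(f,g)\leq\varepsilon$. No substantive difference.
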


\begin{proof}
Choose $\mu\in P(X)$, and any $\varepsilon>0$. Since $X$ is compact, we can
find a measurable pairwise disjoint cover $C_{1},C_{2},...,C_{k}$ of $X$, such
that the diameter of each $C_{i}$ is at most $\varepsilon$. For each $1\leq
i\leq k$, choose any $x_{i}\in C_{i}$. Let $\nu_{\varepsilon}=%
{\textstyle\sum_{i=1}^{k}}
\mu(C_{i})\delta_{x_{i}}$, and then $\nu_{\varepsilon}$ is supported on the
finite set $\left\{  x_{1},...,x_{k}\right\}  $. Choose any measurable
$f:I\rightarrow X$ such that $\mu=f_{\sharp}\lambda$, and define
$g:I\rightarrow X$ \ with $g(t)=x_{i}$ for all $t\in f^{-1}(C_{i})$. Now,
$\nu_{\varepsilon}=g_{\sharp}\lambda$, and since diameter of each $C_{i}$ is
at most $\varepsilon$, $D(f,g)\leq\varepsilon$, hence $\Delta(\mu
,\nu_{\varepsilon})\leq\varepsilon$.
\end{proof}

With regards to the weak* topology, the set of all measures uniformly
supported on a finite (multi)set is w-dense. No similar claim is true in the
uniform topology.

\section{Characterization of convergence in the dynamical topology}

We first recall some well known properties of convergent sequences of measures.

\begin{proposition}
\label{p:basic} Assume that $\mu_{n}$ $w$-converges, $d$-converges or
$u$-converges to $\mu$. Then

(i) For each open $U\subset X$, $\liminf\mu_{n}(U)\geq\mu(U)$.

(ii) For each closed $V\subset X$, $\limsup\mu_{n}(V)\leq\mu(V)$.

(iii) For each $W\subset X$ such that $\mu(\partial(W))=0$, then $\lim\mu
_{n}(W)=\mu(W)$.
\end{proposition}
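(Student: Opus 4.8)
The plan is to reduce all three cases to weak* convergence, and then prove the classical Portmanteau-type statement for $\tau_w$. Proposition \ref{p:sub} already gives that $d$-convergence implies $w$-convergence; and since the uniform topology $\tau_u$ is finer than $\tau_w$, $u$-convergence also implies $w$-convergence. Hence it suffices to establish (i)--(iii) under the sole assumption that $\mu_n$ $w$-converges to $\mu$, i.e. that $\int_X h\,d\mu_n \to \int_X h\,d\mu$ for every $h \in C(X)$.

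For (i), I would approximate the indicator of an open set from below by continuous functions, exploiting the metric structure of $X$. Given an open $U \subset X$, set
\[
f_k(x) = \min\{1,\, k\,d(x, X \setminus U)\},
\]
so that each $f_k \in C(X)$ satisfies $0 \le f_k \le \mathbf{1}_U$, and $f_k$ increases pointwise to $\mathbf{1}_U$ as $k \to \infty$ (the case $U = X$ being trivial since $\mu_n(X)=\mu(X)=1$). For fixed $k$, monotonicity of the integral gives $\mu_n(U) \ge \int_X f_k\,d\mu_n$, and $w$-convergence gives $\int_X f_k\,d\mu_n \to \int_X f_k\,d\mu$; hence $\liminf_n \mu_n(U) \ge \int_X f_k\,d\mu$. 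Letting $k \to \infty$ and applying the monotone convergence theorem yields $\int_X f_k\,d\mu \uparrow \mu(U)$, so $\liminf_n \mu_n(U) \ge \mu(U)$.

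Claim (ii) then follows by complementation: if $V$ is closed, then $X \setminus V$ is open, and since $\mu_n,\mu$ are probability measures, applying (i) to $U = X \setminus V$ gives $\liminf_n(1 - \mu_n(V)) \ge 1 - \mu(V)$, i.e. $\limsup_n \mu_n(V) \le \mu(V)$. Finally (iii) is obtained by combining (i) and (ii). If $\mu(\partial W)=0$, let $W^{\circ}$ and $\overline{W}$ denote the interior and closure of $W$; then $\mu(W^{\circ}) = \mu(\overline{W}) = \mu(W)$. Applying (i) to the open set $W^{\circ}$ and (ii) to the closed set $\overline{W}$,
\begin{align*}
\mu(W) = \mu(W^{\circ}) &\le \liminf_n \mu_n(W^{\circ}) \le \liminf_n \mu_n(W) \\
&\le \limsup_n \mu_n(W) \le \limsup_n \mu_n(\overline{W}) \le \mu(\overline{W}) = \mu(W),
\end{align*}
which forces $\lim_n \mu_n(W) = \mu(W)$.

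The argument is entirely standard, and no serious obstacle arises: compactness of $X$ guarantees that the distance-to-complement functions $f_k$ are well defined and continuous, and the only delicate point is the interchange of the two limiting parameters, which is handled cleanly by passing to $\liminf_n$ first and only afterwards letting $k \to \infty$.
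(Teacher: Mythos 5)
Your proposal is correct and follows the same route as the paper: reduce the $d$- and $u$-convergence cases to $w$-convergence (via Proposition \ref{p:sub} and the fact that $\tau_u$ is finer than $\tau_w$), and then invoke the standard Portmanteau-type statement for weak* convergence. The only difference is that the paper simply cites \cite{Walters:82} for that classical statement, whereas you supply its (correct) proof via the approximating functions $f_k(x)=\min\{1,k\,d(x,X\setminus U)\}$.
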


\begin{proof}
The proof for $w$-convergence is e.g. in \cite{Walters:82}. The rest follows
from Proposition \ref{p:sub} and the definition of u-convergence.
\end{proof}

The following notion is the main tool for characterization of d-convergence.

\begin{definition}
We say that a measurable set $A\subset X$ is $\mu$-separating, if there exists
an open set $D$, $Cl(A)\subseteq D$, such that $\mu(D\setminus A)=0$.
\end{definition}

Note that a $\mu$-separating $A$ can have measure $0$.

In the following, $\epsilon$-neighborhood of a set $A$ is the open set $\{x\in
X,$ such that $\exists y\in A,d(x,y)<\epsilon\}$. Note also that,\ since $X$
is compact, if $D$ is any open set such that $Cl(A)\subseteq D$, then for
small $\varepsilon$, an $\varepsilon$-neighborhood of $A$ is a subset of $D$.

The proof of the following Lemma is an easy exercise.

\begin{lemma}
\label{p:sep} Assume that $A\subset X$ is a measurable set. $A$ is $\mu
$-separating, if and only if for each small enough $\epsilon>0$, if $D$ is an
$\epsilon$-neighborhood of $A$, then $\mu(D\setminus A)=0$.
\end{lemma}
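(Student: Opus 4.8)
The statement is an equivalence, so the plan is to prove the two implications separately, writing $D_{\varepsilon}=\{x\in X:\exists y\in A,\ d(x,y)<\varepsilon\}$ for the $\varepsilon$-neighborhood of $A$. The only facts I expect to need are that $D_{\varepsilon}$ is open, that $Cl(A)\subseteq D_{\varepsilon}$, and the remark preceding the Lemma that, by compactness of $X$, any open set $D\supseteq Cl(A)$ contains $D_{\varepsilon}$ for all sufficiently small $\varepsilon$.

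For the direction asserting that the neighborhood condition implies $A$ is $\mu$-separating, I would argue as follows. Fix one value of $\varepsilon$ small enough that $\mu(D_{\varepsilon}\setminus A)=0$, as furnished by the hypothesis. The set $D_{\varepsilon}$ is open by construction, and it contains $Cl(A)$: given $x\in Cl(A)$ there is $y\in A$ with $d(x,y)<\varepsilon$, so $x\in D_{\varepsilon}$. Hence $D_{\varepsilon}$ is an open set with $Cl(A)\subseteq D_{\varepsilon}$ and $\mu(D_{\varepsilon}\setminus A)=0$, which is exactly a witness $D$ in the definition of $\mu$-separating.

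For the converse, suppose $A$ is $\mu$-separating, so there is an open $D$ with $Cl(A)\subseteq D$ and $\mu(D\setminus A)=0$. By the compactness remark, there is $\varepsilon_{0}>0$ such that $D_{\varepsilon}\subseteq D$ whenever $0<\varepsilon<\varepsilon_{0}$. For such $\varepsilon$ we then have $D_{\varepsilon}\setminus A\subseteq D\setminus A$, and monotonicity of $\mu$ gives $\mu(D_{\varepsilon}\setminus A)\leq\mu(D\setminus A)=0$. This is precisely the neighborhood condition for all small enough $\varepsilon$.

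The argument is essentially routine; the only nontrivial input is the compactness fact used in the converse, which holds because $Cl(A)$ is compact and disjoint from the closed (hence compact) set $X\setminus D$, so the two have positive separation $\delta$, and any $\varepsilon<\delta$ forces $D_{\varepsilon}\subseteq D$. Since this is recorded in the remark immediately before the Lemma, I expect no genuine obstacle, consistent with the paper's assessment that it is an easy exercise.
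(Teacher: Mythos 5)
Your proof is correct, and since the paper leaves this Lemma as an easy exercise, your argument is exactly the intended one: one fixed $\varepsilon$-neighborhood serves as the witness $D$ in the definition for one direction, and the compactness remark preceding the Lemma (positive separation between $Cl(A)$ and $X\setminus D$) gives the other. No gaps.
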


Now we can characterize d-convergence.

\begin{theorem}
\label{t:char} A sequence of measures $\mu_{n}$ d-converges to a measure $\mu$
if and only if the following two conditions hold:

(i) The sequence $\mu_{n}$ w-converges to the measure $\mu$; and

(ii) For each $\mu$-separating set $A$, there is a neighborhood $B$ of $A$,
such that for any open $C$, $Cl(A)\subseteq C\subseteq B$, there is $n_{0}$
such that for all $n>n_{0}$, $\mu_{n}(C)=\mu(C)$.
\end{theorem}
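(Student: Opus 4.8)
The plan is to prove the two implications separately. A device I will use throughout is the realization of a discrete coupling as a pair of functions, exactly as in the proof of Lemma \ref{l:topology}: given a Borel measure $\gamma$ on $X\times X$ with marginals $\mu$ and $\nu$, Corollary \ref{t:big} produces $h^{\ast}:I\rightarrow X\times X$ with $h^{\ast}_{\sharp}\lambda=\gamma$, and then $f=\pi_{1}\circ h^{\ast}$, $g=\pi_{2}\circ h^{\ast}$ satisfy $f_{\sharp}\lambda=\mu$, $g_{\sharp}\lambda=\nu$ and $D(f,g)\le\sup\{d(x,y):(x,y)\in\mathrm{supp}(\gamma)\}$. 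Thus bounding $\Delta(\mu,\nu)$ will always reduce to producing a coupling measure supported near the diagonal. For a set $U\subseteq X$ and $\rho>0$ I write $N_{\rho}(U)$ for the open $\rho$-neighborhood of $U$.

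For the forward direction (d-convergence implies (i) and (ii)), part (i) is just Proposition \ref{p:sub}. For (ii), let $A$ be $\mu$-separating; by Lemma \ref{p:sep} I fix $\varepsilon^{\ast}$ with $\mu(N_{\varepsilon}(A)\setminus A)=0$ for all $\varepsilon\le\varepsilon^{\ast}$, and set $B=N_{\varepsilon^{\ast}/2}(A)$. Given open $C$ with $Cl(A)\subseteq C\subseteq B$, compactness of $Cl(A)$ gives $\varepsilon_{1}\in(0,\varepsilon^{\ast}/2]$ with $N_{\varepsilon_{1}}(A)\subseteq C$. Taking $n$ large enough that $\Delta(\mu_{n},\mu)<\varepsilon_{1}$ and coupling functions $f,g$ with $D(f,g)<\varepsilon_{1}$, I argue as follows: points of $f^{-1}(A)$ map under $g$ into $N_{\varepsilon_{1}}(A)\subseteq C$, giving $\mu_{n}(C)\ge\mu(A)=\mu(C)$; conversely points of $g^{-1}(C)$ map under $f$ into $N_{\varepsilon^{\ast}/2+\varepsilon_{1}}(A)\subseteq N_{\varepsilon^{\ast}}(A)$, whose excess over $A$ is $\mu$-null, giving $\mu_{n}(C)\le\mu(A)=\mu(C)$. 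Hence $\mu_{n}(C)=\mu(C)$ for large $n$, which is (ii).

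For the converse I will fix $\varepsilon>0$ and a finite measurable partition $C_{1},\dots,C_{k}$ of $X$ into cells of diameter $<\varepsilon$ (which exists by compactness), join $C_{i}\sim C_{j}$ when $d(C_{i},C_{j})<\varepsilon$, and form the transportation network with supplies $\mu(C_{i})$ and demands $\mu_{n}(C_{j})$ routed along edges. By max-flow--min-cut a coupling supported on close cells exists precisely when $\mu(U_{S})\le\mu_{n}(W_{S})$ for every $S\subseteq\{1,\dots,k\}$, where $U_{S}=\bigcup_{i\in S}C_{i}$ and $W_{S}=\bigcup_{j\in N(S)}C_{j}$, and one checks $N_{\varepsilon}(U_{S})\subseteq W_{S}$. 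I will verify this inequality for large $n$ by a dichotomy. If $\mu(N_{\varepsilon}(U_{S}))>\mu(U_{S})$, then Proposition \ref{p:basic}(i) applied to the open set $N_{\varepsilon}(U_{S})$ gives $\liminf_{n}\mu_{n}(W_{S})\ge\mu(N_{\varepsilon}(U_{S}))>\mu(U_{S})$. If instead $\mu(N_{\varepsilon}(U_{S}))=\mu(U_{S})$, then $U_{S}$ is $\mu$-separating, so (ii) furnishes an open $C$ with $Cl(U_{S})\subseteq C\subseteq W_{S}$ and $\mu_{n}(C)=\mu(C)=\mu(U_{S})$ for large $n$, whence $\mu_{n}(W_{S})\ge\mu(U_{S})$. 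Since there are finitely many $S$, a single $n_{0}$ works for all; for $n>n_{0}$ the coupling exists, is supported within distance $3\varepsilon$ of the diagonal, and the realization above yields $\Delta(\mu_{n},\mu)\le 3\varepsilon$. Letting $\varepsilon\to0$ gives d-convergence.

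The main obstacle is the converse, and within it the exactness forced by the supremum nature of $D$: because $D$ is a supremum rather than an average, essentially \emph{all} mass must move a short distance, so the transportation problem must be \emph{exactly} feasible, not merely approximately so. Weak$^{\ast}$ convergence controls open sets only up to $\liminf$, so the delicate point is the tight case $\mu(N_{\varepsilon}(U_{S}))=\mu(U_{S})$, where I must recognize $U_{S}$ as a $\mu$-separating set and convert hypothesis (ii) into the exact mass equality $\mu_{n}(C)=\mu(C)$ demanded by the flow feasibility. Verifying the max-flow--min-cut criterion in this measure-theoretic setting, and checking that $W_{S}$ contains a genuine open neighborhood of $Cl(U_{S})$ so that both the $\liminf$ estimate and the application of (ii) are legitimate, are the steps that will require the most care.
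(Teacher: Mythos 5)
Your proof is correct, and while the forward direction is essentially the paper's argument (Lemma \ref{l:claim5}, done slightly more cleanly since you work with measure inequalities $\mu(A)\leq\mu_{n}(C)\leq\mu(N_{\varepsilon^{\ast}}(A))=\mu(A)$ rather than the exact preimage identity, which lets you avoid the paper's appeal to Lemma \ref{l:topA} at that point), the converse takes a genuinely different route through its combinatorial core. The paper discretizes $X$ into cells $A_{i}$, passes to \emph{overlapping} $\delta$-neighborhoods $B_{i}$, verifies the Hall-type inequalities $\mu_{n}(B_{i_{1}}\cup\dots\cup B_{i_{k}})\geq\mu(A_{i_{1}}\cup\dots\cup A_{i_{k}})$ by exactly your dichotomy (separating unions handled by hypothesis (ii), non-separating ones by strict inequality plus w-convergence --- this is Lemma \ref{l:claim1}), and then needs the bespoke decomposition Lemma \ref{l:claim2}, proved in the Appendix by a double induction on $m$ and the ``arrangement'' $\rho$, to split $\mu_{n}$ into pieces $\nu_{i}$ carried by the $B_{i}$. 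You instead keep the demand cells disjoint and encode the slack in a bipartite adjacency graph ($C_{i}\sim C_{j}$ when $d(C_{i},C_{j})<\varepsilon$), so that feasibility reduces to the classical transportation/max-flow--min-cut criterion for a finite bipartite network; this buys you a standard citable lemma in place of the paper's Appendix, at the cost of a slightly worse constant ($3\varepsilon$ versus the paper's $2\varepsilon$, which is immaterial). Two small points deserve explicit care in a write-up: first, hypothesis (ii) only guarantees $\mu_{n}(C)=\mu(C)$ for open $C$ squeezed between $Cl(U_{S})$ and the particular neighborhood $B$ it furnishes, so you must take $C=N_{\delta'}(U_{S})$ with $\delta'$ small enough that $C\subseteq B\cap W_{S}$ (which exists since $W_{S}\supseteq N_{\varepsilon}(U_{S})$); second, realizing the coupling $\gamma$ as functions via Corollary \ref{t:big} only bounds $D_{J}(f,g)$ on the full-measure set $J=h^{\ast-1}(\mathrm{supp}\,\gamma)$, so Lemma \ref{l:topA} is needed to convert that into a bound on $\Delta$ --- exactly as the paper does in its Section 6 proof that $\Delta\leq\Delta_{\infty}$. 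Neither point is a gap, just a detail to make explicit.
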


We prove Theorem \ref{t:char} in several steps.

\begin{lemma}
\label{l:claim1}\textit{Suppose }$\mu_{n}$, $\mu$ satisfy conditions (i),
(ii)\ of Theorem \ref{t:char}, and\textit{\ choose any }$\epsilon>0$\textit{.
Assume that }$A_{1}$\textit{, }$A_{2}$\textit{, ..., }$A_{m}$\textit{\ is a
cover of }$X$, of \textit{measurable, nonempty, pairwise disjoint sets with
diameter less or equal than }$\epsilon$\textit{. Then we can find }$\delta$,
$0<\delta\leq\epsilon$\textit{, and an integer }$n_{0}$, \textit{such that, if
}$B_{1}$\textit{, ..., }$B_{m}$\textit{\ are }$\delta$\textit{-neighborhoods
of }$A_{1}$\textit{,..., }$A_{m}$\textit{\ respectively, then for all }$n\geq
n_{0}$\textit{, and any }$1\leq i_{1}<i_{2}<...<i_{k}\leq m$\textit{,}%
\begin{equation}
\mu_{n}(B_{i_{1}}\cup B_{i_{2}}\cup...\cup B_{i_{k}})\geq\mu(A_{i_{1}}\cup
A_{i_{2}}\cup...\cup A_{i_{k}}).\label{e:smart}%
\end{equation}

\end{lemma}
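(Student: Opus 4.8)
The goal is a lower-bound estimate: for unions of $\delta$-neighborhoods we want $\mu_n$ to dominate $\mu$ of the corresponding unions of the original sets. The natural strategy is to split the index set $\{1,\dots,m\}$ into two types and treat them separately. First I would decompose each $A_i$ according to whether it is $\mu$-separating or not. The key observation is that an arbitrary set need not be $\mu$-separating, so I cannot directly apply condition (ii) to each $A_i$. However, I can write $X$ as a disjoint union and isolate the ``bulk'' of the measure. Concretely, for each $i$ let $A_i^\circ$ be the interior portion of $A_i$ that stays away from the topological boundaries where mass can leak, and set aside a remainder set whose $\mu$-measure I can make small.

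\emph{Step 1 (choosing good sub-neighborhoods).} For the $\mu$-separating sets among the $A_i$, condition (ii) gives a neighborhood $B$ and an exact-equality statement $\mu_n(C)=\mu(C)$ for open $C$ sandwiched between $Cl(A_i)$ and $B$. I would choose $\delta$ small enough that the $\delta$-neighborhood $B_i$ of each such $A_i$ lies inside the neighborhood $B$ furnished by (ii), using the compactness remark (an $\varepsilon$-neighborhood of $A$ sits inside any open $D\supseteq Cl(A)$). For these indices, passing to unions, $\mu_n(\bigcup B_{i_j})=\mu(\bigcup A_{i_j})$ holds exactly for large $n$, which is stronger than the desired inequality~(\ref{e:smart}).

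\emph{Step 2 (handling non-separating sets via Portmanteau).} For indices $i$ where $A_i$ fails to be $\mu$-separating, I would \emph{not} use exact equality but instead rely on Proposition~\ref{p:basic}(i): since $B_{i_1}\cup\cdots\cup B_{i_k}$ is open and $w$-convergence holds, $\liminf_n \mu_n(\bigcup B_{i_j})\ge \mu(\bigcup B_{i_j})\ge \mu(\bigcup A_{i_j})$, the last step because each $A_i\subseteq B_i$. The issue is that $\liminf$ only gives the inequality in the limit, not uniformly for a single threshold $n_0$; but since there are only finitely many subsets $\{i_1<\dots<i_k\}$ of $\{1,\dots,m\}$ (at most $2^m$ of them), I can take $n_0$ to be the maximum over all these finitely many choices, obtaining a single $n_0$ valid for all of them simultaneously.

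\emph{The main obstacle} is reconciling the two regimes into a single pair $(\delta,n_0)$ that works for \emph{every} subcollection at once. The separating sets demand a small $\delta$ (so that $B_i$ stays inside the (ii)-neighborhood), while the $\liminf$ argument for general sets must be quantified uniformly over all $2^m$ index subsets. Once I fix $\delta$ from Step~1 and take $n_0$ as the finite maximum from Step~2, the inequality~(\ref{e:smart}) follows for each mixed union by monotonicity: every $A_{i_j}\subseteq B_{i_j}$, so $\mu(\bigcup A_{i_j})\le \mu(\bigcup B_{i_j})\le \liminf \mu_n(\bigcup B_{i_j})$, and the finite maximum makes this hold for all $n\ge n_0$. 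I would close by remarking that combining exact equality on the separating part with the Portmanteau lower bound on the rest yields~(\ref{e:smart}) with $0<\delta\le\epsilon$ as required.
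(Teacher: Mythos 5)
Your proposal contains a genuine gap in Step 2 and in the treatment of mixed unions. The chain $\mu(\bigcup A_{i_j})\le\mu(\bigcup B_{i_j})\le\liminf_n\mu_n(\bigcup B_{i_j})$ does \emph{not} yield a single $n_0$ with $\mu_n(\bigcup B_{i_j})\ge\mu(\bigcup A_{i_j})$ for all $n\ge n_0$: from $\liminf_n a_n\ge c$ one can only conclude $a_n\ge c-\eta$ eventually, for each $\eta>0$ (consider $a_n=c-1/n$). Taking a maximum of $n_0$'s over the finitely many index subsets does not repair this, because the failure already occurs for a single subset. The Portmanteau bound can only be converted into an eventual bound at the level $\mu(\bigcup A_{i_j})$ if the middle inequality is \emph{strict}, i.e.\ if there is a positive margin $\mu(\bigcup B_{i_j})-\mu(\bigcup A_{i_j})>0$ to absorb the $\eta$. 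This is precisely the idea you are missing, and it is where the paper's proof does its real work: the dichotomy must be drawn according to whether the \emph{union} $A_{i_1}\cup\dots\cup A_{i_k}$ is $\mu$-separating, not whether the individual $A_i$ are. If the union is separating, condition (ii) plus Lemma \ref{p:sep} give exact equality $\mu_n(\bigcup B_{i_j})=\mu(\bigcup B_{i_j})=\mu(\bigcup A_{i_j})$ for small $\delta$ and large $n$; if the union is \emph{not} separating, the converse of Lemma \ref{p:sep} gives the strict inequality $\mu(\bigcup B_{i_j})>\mu(\bigcup A_{i_j})$, and one sets $\rho>0$ equal to the minimum of these finitely many positive gaps, after which Proposition \ref{p:basic}(i) with tolerance $\rho$ closes the argument.

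Your index-by-index decomposition also cannot be salvaged by restricting Step 2 to non-separating $A_i$: a union containing a non-separating piece may still be separating as a whole (e.g.\ $A_1\cup\dots\cup A_m=X$ always is), in which case there is no positive margin and condition (ii) for that union is genuinely needed; conversely, separating-ness of each $A_{i_j}$ individually is only used in the paper insofar as finite unions inherit it. In short: classify the $2^m-1$ unions, not the $m$ sets, and for the non-separating unions extract a uniform positive gap before invoking $w$-convergence.
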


\begin{proof}
Let $\mathcal{I}$ be the set of all subsets of indices $\{i_{1},i_{2}%
,...,i_{k}\}\subseteq\{1,2,...,m\}$, for which $A_{i_{1}}\cup A_{i_{2}}%
\cup...\cup A_{i_{k}}$ is $\mu$-separating. Now choose $\delta\leq\epsilon$
small enough, such that the condition (ii) of the Theorem is satisfied in the
following sense: for all $(i_{1},i_{2},...,i_{k})\in\mathcal{I}$, if $B_{1}%
$,..., $B_{m}$ are $\delta$-neighborhoods of $A_{1}$,..., $A_{m}$
respectively, then there is $n_{1}$ such that for all $n\geq n_{1}$,
\begin{equation}
\mu(B_{i_{1}}\cup B_{i_{2}}\cup...\cup B_{i_{k}})=\mu_{n}(B_{i_{1}}\cup
B_{i_{2}}\cup...\cup B_{i_{k}}).\label{e:00}%
\end{equation}
For all $(i_{1},i_{2},...,i_{k})\in\mathcal{I}$, since $(A_{i_{1}}\cup
A_{i_{2}}\cup...\cup A_{i_{k}})$ is $\mu$-separating, we can also assume that
$\delta$ is small enough, so that
\begin{equation}
\mu(B_{i_{1}}\cup B_{i_{2}}\cup...\cup B_{i_{k}})=\mu(A_{i_{1}}\cup A_{i_{2}%
}\cup...\cup A_{i_{k}}).\label{e:01}%
\end{equation}

The converse of Lemma \ref{p:sep} implies that if $(i_{1},i_{2},...,i_{k}%
)\notin\mathcal{I}$,
\begin{equation}
\mu(B_{i_{1}}\cup B_{i_{2}}\cup...\cup B_{i_{k}})>\mu(A_{i_{1}}\cup A_{i_{2}%
}\cup...\cup A_{i_{k}}).\label{e:02}%
\end{equation}
Now, we define
\begin{equation}
\rho=\min\{\mu(B_{i_{1}}\cup B_{i_{2}}\cup...\cup B_{i_{k}})-\mu(A_{i_{1}}\cup
A_{i_{2}}\cup...\cup A_{i_{k}}),(i_{1},i_{2},...,i_{k})\notin\mathcal{I}%
\}.\label{e:03}%
\end{equation}
Then because of (\ref{e:02}), $\rho>0$. Now, choose $n_{2}$ such that, for all
$n\geq n_{2}$, and for all $(i_{1},i_{2},...,i_{k})\notin\mathcal{I}$
\begin{equation}
\mu_{n}(B_{i_{1}}\cup B_{i_{2}}\cup...\cup B_{i_{k}})\geq\mu(B_{i_{1}}\cup
B_{i_{2}}\cup...\cup B_{i_{k}})-\rho.\label{e:04}%
\end{equation}
Such $n_{2}$ exists because $B_{i_{1}}\cup B_{i_{2}}\cup...\cup B_{i_{k}}$ is
open, because $\mu_{n}$ w-converges to $\mu$, and because of Proposition
\ref{p:basic}, (i). Let $n_{0}=\max\{n_{1},n_{2}\}$. Now, (\ref{e:00}) and
(\ref{e:01}) imply (\ref{e:smart}) for $(i_{1},i_{2},...,i_{k})\in\mathcal{I}%
$, and (\ref{e:03}) and (\ref{e:04}) imply (\ref{e:smart}) for $(i_{1}%
,i_{2},...,i_{k})\notin\mathcal{I}$.
\end{proof}

\begin{lemma}
\label{l:claim2}Let $\xi$\ be a measure on $X$\ (positive, not necessarily a
normed one), $x_{1},x_{2},...,x_{m}$, nonnegative real numbers, and
$B_{1},B_{2},...,B_{m}$\ measurable subsets of $X$, such that for any $1\leq
i_{1}<i_{2}<...<i_{k}\leq m$,\textit{\ }%
\begin{align}
\xi(B_{i_{1}}\cup B_{i_{2}}\cup...\cup B_{i_{k}})  & \geq x_{i_{1}}+x_{i_{2}%
}+...+x_{i_{k}},\label{com1}\\
\xi(X)  & =x_{1}+x_{2}+...+x_{m}.\label{com2}%
\end{align}
\textit{Then there exist measures }$\nu_{1},\nu_{2},...,\nu_{m}$%
\textit{\ (positive, not necessarily normed) such that for all }%
$i=1,...,m$\textit{,}%
\begin{align}
\nu_{i}(B_{i}^{C})  & =0\text{,}\label{com3.2}\\
\nu_{i}(X)  & =x_{i}\text{,}\label{com3.3}\\
\xi & =\nu_{1}+\nu_{2}+...+\nu_{m}.\label{com3.4}%
\end{align}

\end{lemma}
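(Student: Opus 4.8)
The plan is to reduce this measure-theoretic splitting statement to a finite combinatorial transportation (feasibility-of-flow) problem, solve that problem by a Hall/max-flow type criterion, and then reassemble the measures. The key observation that makes this work is that, once we know \emph{how much} mass each $\nu_i$ should draw from each region of $X$, the measures themselves can be produced by taking scalar multiples of restrictions of $\xi$; so all the real content lies in distributing the \emph{numbers} correctly.

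First I would split $X$ into the atoms of the finite Boolean algebra generated by $B_1,\dots,B_m$. For each $S\subseteq\{1,\dots,m\}$ set $E_S=\bigcap_{i\in S}B_i\cap\bigcap_{i\notin S}B_i^{C}$; these are measurable, pairwise disjoint, cover $X$, and a point of $E_S$ lies in $B_i$ exactly when $i\in S$, so $B_i=\bigcup_{S\ni i}E_S$. Because (\ref{com3.2}) forces $\nu_i$ to vanish off $B_i$, on each atom $E_S$ only the indices $i\in S$ may receive mass. Thus it suffices to choose nonnegative numbers $f_{S,i}$ (for $i\in S$) with $\sum_{i\in S}f_{S,i}=\xi(E_S)$ and $\sum_{S\ni i}f_{S,i}=x_i$, and then define $\nu_i=\sum_{S\ni i}\big(f_{S,i}/\xi(E_S)\big)\,\xi|_{E_S}$, with the convention that an atom having $\xi(E_S)=0$ contributes nothing (there $f_{S,i}=0$ for all $i$ anyway).

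The existence of such $f_{S,i}$ is precisely the feasibility of a bipartite transportation problem: the sources are the atoms $E_S$ with supply $\xi(E_S)$, the sinks are the indices $i$ with demand $x_i$, and an edge $E_S\to i$ is allowed iff $i\in S$; by (\ref{com2}) total supply equals total demand. The feasibility criterion for such problems (equivalently the max-flow--min-cut theorem, or the defect form of Hall's theorem) says a flow saturating all demands exists iff for every set $T$ of sinks the demand $\sum_{i\in T}x_i$ does not exceed the total supply of the sources adjacent to $T$. Here the sources adjacent to $T$ are the atoms $E_S$ with $S\cap T\neq\varnothing$, and their union is exactly $\bigcup_{i\in T}B_i$, so their total supply is $\xi\big(\bigcup_{i\in T}B_i\big)$; hence the criterion reduces verbatim to the hypothesis (\ref{com1}).

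With the $f_{S,i}$ in hand the three conclusions are routine bookkeeping: $\nu_i$ is concentrated on $\bigcup_{S\ni i}E_S=B_i$, giving (\ref{com3.2}); its total mass is $\sum_{S\ni i}f_{S,i}=x_i$, giving (\ref{com3.3}); and summing over $i$ gives $\sum_S\big(\tfrac{1}{\xi(E_S)}\sum_{i\in S}f_{S,i}\big)\xi|_{E_S}=\sum_S\xi|_{E_S}=\xi$, giving (\ref{com3.4}). I expect the only genuine obstacle to be stating the feasibility theorem with the correct quantifiers and verifying that the neighbourhood of a sink-set $T$ in the bipartite graph translates exactly into the union $\bigcup_{i\in T}B_i$; the measurability of the atoms and the treatment of zero-measure atoms are routine. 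One could instead argue by induction on $m$, peeling off $\nu_m$ from within $B_m$ while preserving (\ref{com1}) for the remaining indices, but that merely re-derives the deficiency argument behind the flow criterion and is less transparent.
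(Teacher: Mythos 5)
Your proof is correct, but it takes a genuinely different route from the paper. The paper proves the lemma from scratch by a double induction on $m$ and on the ``arrangement'' $\rho$ (the number of Boolean atoms of $(B_1,\dots,B_m)$ carrying positive $\xi$-mass): in the generic case where all inequalities (\ref{com1}) are strict and all $x_i>0$, it subtracts a small multiple $\varepsilon$ of $\xi$ restricted to one atom, chosen so that after the perturbation either an equality is created (splitting the problem in two), some $x_p$ hits zero, or an atom is annihilated, so that one of $m$ or $\rho$ drops. You instead pass directly to the atoms $E_S$ and recognize the problem as feasibility of a bipartite transportation problem, which you dispatch with the Gale--Hoffman / max-flow--min-cut criterion; the verification that the neighbourhood of a sink-set $T$ has supply $\xi\bigl(\bigcup_{i\in T}B_i\bigr)$ is exactly right, and the case $T=\{1,\dots,m\}$ together with (\ref{com2}) automatically kills the troublesome atom $E_{\varnothing}$, which you should perhaps say explicitly. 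Your reassembly of the $\nu_i$ as $\sum_{S\ni i}\bigl(f_{S,i}/\xi(E_S)\bigr)\,\xi|_{E_S}$ is the same normalization the paper uses implicitly. What your approach buys is brevity and conceptual clarity --- the lemma is revealed as a continuous transportation-feasibility statement --- at the price of importing a standard theorem; the paper's perturbative induction is longer but self-contained, and in effect re-derives that feasibility criterion (your closing remark anticipates exactly this).
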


The proof of Lemma \ref{l:claim2} is essentially combinatorial in character
and is not related to the rest of the paper, so we postpone its proof to the Appendix.

\begin{lemma}
\label{l:claim4}Suppose $\mu_{n}$, $\mu$ satisfy conditions (i), (ii)\ of
Theorem \ref{t:char}, and choose any $\epsilon>0$. Then there is an integer
$n_{0}$ such that for all $n\geq n_{0}$,%
\[
\Delta(\mu_{n},\mu)\leq2\epsilon.
\]

\end{lemma}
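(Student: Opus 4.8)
The plan is to reduce everything to the combinatorial decomposition of Lemma \ref{l:claim2}, using Lemma \ref{l:claim1} to supply its hypotheses and Lemma \ref{l:topA} to convert the resulting decomposition into a bound on $\Delta$. First I would fix $\epsilon>0$ and, using compactness of $X$, choose a finite cover $A_{1},\dots,A_{m}$ of $X$ by nonempty, pairwise disjoint, measurable sets each of diameter at most $\epsilon$. Applying Lemma \ref{l:claim1} to this cover produces a radius $\delta\leq\epsilon$ and an index $n_{0}$ so that, writing $B_{i}$ for the $\delta$-neighborhood of $A_{i}$, the Hall-type inequalities (\ref{e:smart}) hold for every $n\geq n_{0}$ and every choice of indices $1\leq i_{1}<\dots<i_{k}\leq m$.

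Next, fix $n\geq n_{0}$ and apply Lemma \ref{l:claim2} with $\xi=\mu_{n}$, weights $x_{i}=\mu(A_{i})$, and the sets $B_{i}$ just produced. Hypothesis (\ref{com1}) is exactly (\ref{e:smart}), while (\ref{com2}) holds because $A_{1},\dots,A_{m}$ partition $X$, so $\sum_{i}x_{i}=\mu(X)=1=\mu_{n}(X)=\xi(X)$. Lemma \ref{l:claim2} then yields positive measures $\nu_{1},\dots,\nu_{m}$ with $\nu_{i}(B_{i}^{C})=0$, $\nu_{i}(X)=\mu(A_{i})$, and $\mu_{n}=\nu_{1}+\dots+\nu_{m}$. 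The key point is that the mass $\nu_{i}(X)$ which $\mu_{n}$ assigns to the neighborhood $B_{i}$ matches exactly the mass $\mu(A_{i})$, so the two measures can be coupled cell by cell.

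To produce the coupling functions I would represent $\mu$ by a single measurable $f:I\rightarrow X$ via Corollary \ref{t:big} and set $I_{i}=f^{-1}(A_{i})$, so that $\lambda(I_{i})=\mu(A_{i})$ and $f(I_{i})\subseteq A_{i}$. On each $I_{i}$ with $\mu(A_{i})>0$ I would define $g$ so that $g_{\sharp}(\lambda|_{I_{i}})=\nu_{i}$, which is possible since $\lambda(I_{i})=\nu_{i}(X)$ (again using Corollary \ref{t:big} after normalizing); cells with $\mu(A_{i})=0$ are discarded. Because $\nu_{i}$ is concentrated on $B_{i}$, the set $J=\bigcup_{i}\bigl(I_{i}\cap g^{-1}(B_{i})\bigr)$ has full measure, and for $a\in J$ lying in $I_{i}$ we have $f(a)\in A_{i}$ and $g(a)\in B_{i}$. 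Since $A_{i}$ has diameter at most $\epsilon$ and $B_{i}$ is its $\delta$-neighborhood with $\delta\leq\epsilon$, any $g(a)\in B_{i}$ lies within $\delta$ of some point of $A_{i}$, so $d(f(a),g(a))\leq\epsilon+\delta\leq2\epsilon$. Thus $D_{J}(f,g)\leq2\epsilon$, and Lemma \ref{l:topA}(ii) gives $\Delta(\mu_{n},\mu)\leq2\epsilon$ for all $n\geq n_{0}$.

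The conceptual heart of the argument is Lemma \ref{l:claim2}, whose proof is deferred to the Appendix; granting it, the main obstacle in the present lemma is the passage from the abstract decomposition $\mu_{n}=\nu_{1}+\dots+\nu_{m}$ to explicit coupling functions $f,g$ with the prescribed pushforwards. The delicate issue there is that $D(f,g)$ is a supremum, so a pointwise distance bound that failed on a null set would be fatal; this is precisely why I restrict attention to the full-measure set $J$ and invoke Lemma \ref{l:topA}(ii), which guarantees that controlling the distance off a null set suffices to bound $\Delta$.
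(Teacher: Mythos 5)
Your proof is correct and takes essentially the same route as the paper's: Lemma \ref{l:claim1} supplies the Hall-type inequalities, Lemma \ref{l:claim2} decomposes $\mu_{n}$ into pieces $\nu_{i}$ whose masses match $\mu(A_{i})$, and a cell-by-cell coupling yields the $2\epsilon$ bound. Your only deviations are cosmetic --- using $I_{i}=f^{-1}(A_{i})$ in place of the paper's interval partition of $I$, and explicitly handling the null set where $g$ might leave $B_{i}$ via the full-measure set $J$ and Lemma \ref{l:topA}(ii), which is in fact slightly more careful than the paper's assertion that $f_{n}(I_{i})\subseteq B_{i}$.
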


\begin{proof}
We can find a finite cover $A_{1}$, $A_{2}$, ..., $A_{m}$\ of $X$\ of
measurable, nonempty, pairwise disjoint sets with diameter less or equal than
$\varepsilon$ because of compactness of $X$. We apply Lemma \ref{l:claim1} and
find $0<\delta\leq\epsilon$ and an integer $n_{0}$, such that if $B_{1}%
,B_{2},...,B_{m}$ are the $\delta$-neighborhoods of $A_{1}$, $A_{2}$, ...,
$A_{m}$, and $n\geq n_{0}$, then (\ref{e:smart}) holds. Choose $n\geq n_{0}$,
and set $x_{i}=\mu(A_{i})$, $\xi=\mu_{n}$. The relation (\ref{e:smart}) and
the fact that $(A_{i})$ are pairwise disjoint imply (\ref{com1}). As $(A_{i})$
is a measurable partition of $X$, $\mu(A_{1})+...+\mu(A_{m})=\mu(X)=1=\mu
_{n}(X)$, which is by definition (\ref{com2}). Now applying Lemma
\ref{l:claim2} we obtain positive measures $\nu_{1},\nu_{2},...,\nu_{m}$ such
that
\begin{align*}
\mu_{n}  & =\nu_{1}+\nu_{2}+...+\nu_{m},\\
\nu_{i}(B_{i}^{c})  & =0\text{,}\\
\nu_{i}(X)  & =\nu_{i}(B_{i})=\mu(A_{i})\text{.}%
\end{align*}
Let $I_{1}=[a_{0},a_{1})$, $I_{2}=[a_{1},a_{2})$, ... ,$I_{m}=[a_{m-1},a_{m}]$
be a partition of $[0,1]$, $0=a_{0}\leq a_{1}\leq a_{2}\leq...\leq a_{m-1}\leq
a_{m}=1$, such that $a_{i}-a_{i-1}=\mu(A_{i})$ for all $i=1,...,m$ (if
$a_{i-1}=a_{i}$, then $I_{i}=\emptyset$). Then for all $i=1,...,m$,
\[
\nu_{i}(B_{i})=\mu(A_{i})=\lambda(I_{i}).
\]
Now, using Theorem \ref{t:big}, we construct a function $f:I\rightarrow X$
such that $f(I_{i})\subseteq A_{i}$, $i=1,...,m$, and such that $\mu
=f_{\sharp}\lambda$. The construction relies on the fact that $\{A_{i}%
,i=1,...,m\}$ is a measurable partition of $X$, and $\mu(A_{i})=\lambda
(I_{i})$.

Similarly, we can construct a function $f_{n}:I\rightarrow X$ such that
$f_{n}(I_{i})\subseteq B_{i}$, and such that $f_{n\sharp}\lambda=\mu_{n}$. We
do it in the following way: let $\lambda_{i}=\lambda\,|\,I_{i}$. We construct
$f_{n}|_{I_{i}}$ to be any measurable function so that
\[
(f_{n}|_{I_{i}})_{\sharp}\lambda_{i}=\nu_{i}.
\]

The construction implies that $f(t)\in A_{i}$ if and only if $f_{n}(t)\in
B_{i}$. Since $A_{i}\subseteq B_{i}$, and the diameter of $B_{i}$ is at most
$2\epsilon$, then for any $x\in A_{i}$, $y\in B_{i}$ we get $d(x,y)\leq
2\varepsilon$. We conclude that $D(f,f_{n})\leq2\epsilon$, and by definition
$\Delta(\mu,\mu_{n})\leq2\epsilon$.
\end{proof}

\begin{lemma}
\label{l:claim5}If a sequence of measures $\mu_{n}$ d-converges to a measure
$\mu$, then it satisfies (ii)\ from Theorem \ref{t:char}.
\end{lemma}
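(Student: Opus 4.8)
The plan is to exploit the uniform (sup-norm) control that $d$-convergence gives over transport functions, together with the measure-zero ``buffer'' around $A$ furnished by $\mu$-separation, to sandwich the preimage of $C$ under a transport map for $\mu_n$ exactly between two sets of equal $\lambda$-measure. First I would fix a $\mu$-separating set $A$ and invoke Lemma \ref{p:sep} to choose $\epsilon_0>0$ small enough that the $\epsilon_0$-neighborhood, which I denote $N_{\epsilon_0}(A)$, satisfies $\mu(N_{\epsilon_0}(A)\setminus A)=0$. I then declare the neighborhood required by the theorem to be $B=N_{\epsilon_0/2}(A)$. Observe that for any open $C$ with $Cl(A)\subseteq C\subseteq B$ one already has $\mu(C)=\mu(A)$, since $C\setminus A\subseteq N_{\epsilon_0}(A)\setminus A$ is $\mu$-null.

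Now I fix such a $C$. The crucial quantity is $\rho=\mathrm{dist}(Cl(A),X\setminus C)$, which is strictly positive because $Cl(A)$ is compact and disjoint from the closed set $X\setminus C$. I would set $\delta<\min(\rho,\epsilon_0/2)$ and use $d$-convergence to find $n_0$ so that for every $n>n_0$ there exist transport functions $f_n,h_n:I\rightarrow X$ with $f_{n\sharp}\lambda=\mu_n$, $h_{n\sharp}\lambda=\mu$, and $D(f_n,h_n)<\delta$, so that $d(f_n(t),h_n(t))<\delta$ for every $t$. Writing $P_n=h_n^{-1}(A)$ and $Q_n=h_n^{-1}(N_{\epsilon_0}(A))$, one has $\lambda(P_n)=\mu(A)$ and $\lambda(Q_n\setminus P_n)=\mu(N_{\epsilon_0}(A)\setminus A)=0$.

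The heart of the argument is the sandwich $P_n\subseteq f_n^{-1}(C)\subseteq Q_n$, obtained from two elementary distance estimates. For the upper inclusion: if $t\notin Q_n$ then $d(h_n(t),A)\geq\epsilon_0$, so $d(f_n(t),A)>\epsilon_0-\delta>\epsilon_0/2$ by the triangle inequality, whence $f_n(t)\notin N_{\epsilon_0/2}(A)$ and a fortiori $f_n(t)\notin C$. For the lower inclusion: if $t\in P_n$ then $h_n(t)\in Cl(A)$, so $d(f_n(t),Cl(A))<\delta<\rho$, which by the definition of $\rho$ forces $f_n(t)\in C$. Since $\lambda(Q_n\setminus P_n)=0$, the sandwich yields $\mu_n(C)=\lambda(f_n^{-1}(C))=\lambda(P_n)=\mu(A)=\mu(C)$, which is exactly condition (ii).

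The main obstacle is that (ii) demands \emph{exact} equality $\mu_n(C)=\mu(C)$, not mere convergence: approximate estimates (for instance from weak* convergence and Proposition \ref{p:basic} alone) would only give $\mu_n(C)\to\mu(C)$. Exactness is what forces the two-sided inclusion and the essential use of the $\mu$-null buffer $N_{\epsilon_0}(A)\setminus A$. A secondary point to handle carefully is that $\rho$, and hence the threshold $\delta$ and the index $n_0$, depend on the particular $C$; this is permitted because the theorem fixes $B$ before $C$ but allows $n_0$ to depend on $C$. One should also note that the near-minimizing pair $(f_n,h_n)$ may use a different representative $h_n$ of $\mu$ for each $n$, which is harmless since all estimates are carried out for one fixed $n$ at a time.
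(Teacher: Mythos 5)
Your proof is correct and follows essentially the same route as the paper: both exploit the $\mu$-null buffer around a $\mu$-separating set together with the uniform closeness of transport maps supplied by $d$-convergence to force the exact equality $\mu_n(C)=\mu(A)=\mu(C)$. The only (harmless) difference is bookkeeping: the paper modifies the representative of $\mu$ on a null set via Lemma \ref{l:topA} so as to obtain the exact preimage identity $f_n^{-1}(C)=f^{-1}(A)$, whereas you keep the null set and sandwich $f_n^{-1}(C)$ between two sets of equal $\lambda$-measure.
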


\begin{proof}
Assume that $A$ is $\mu$-separating, and let $D$ be an open set, such that
$Cl(A)\subseteq D$, and such that $\mu(D\setminus A)=0$. Let $\varepsilon>0$
such that $2\varepsilon$-neighborhood of $A$ is a subset of $D$. Let $B$ be
the $\varepsilon$-neighborhood of $A$, and choose an arbitrary open set $C$
such that $Cl(A)\subseteq C\subseteq B$. Then the construction implies that
\begin{equation}
x\in C,\,y\in D^{c}\Longrightarrow d(x,y)>\varepsilon.\label{e:50}%
\end{equation}

Choose $\delta<\varepsilon$, small enough such that the $\delta$ neighborhood
$A$ is a subset of $C$. Now find $n_{0}$ large enough such that for all $n\geq
n_{0}$, $\Delta(\mu,\mu_{n})<\delta/2$. Now we can find functions $f$, $f_{n}$
such that
\begin{equation}
D(f,f_{n})<\delta,\label{e:51}%
\end{equation}
and such that $\mu=f_{\sharp}\lambda,\mu_{n}=f_{n\sharp}\lambda$. Definitions
of $A$ and $D$ imply that $\mu(D\setminus A)=0$. Now Lemma \ref{l:topA}, (i)
implies that without loss of generality we can assume that for all
\begin{equation}
x\in I,\,f(x)\notin D\setminus A.\label{e:52}%
\end{equation}

Suppose $f(x)\in A$. Then (\ref{e:51}) implies that $d(f(x),f_{n}(x))<\delta$,
so $f_{n}(x)$ is in $\delta$ neighborhood of $A$, which is a subset of $C$.
Now, suppose $f_{n}(x)\in C$. Now, because of (\ref{e:50}) and (\ref{e:51}),
\thinspace$f(x)\not \in D^{c}$, and because of (\ref{e:52}), $f(x)\not \in
D\setminus A$, so it must be $f(x)\in A$. We deduce that $f_{n}(x)\in C$\ if
and only if $f(x)\in A$, hence $\mu_{n}(C)=\lambda(f_{n}^{-1}(C))=\lambda
(f^{-1}(A))=\mu(A)$. Since $A$ is $\mu$-separating, $\mu(A)=\mu(C)$, so we
deduce that for all $n\geq n_{0}$, $\mu_{n}(C)=\mu(C)$.
\end{proof}

We now prove Theorem \ref{t:char}.

\begin{proof}
One implication of Theorem \ref{t:char} follows from Lemma \ref{l:claim4}; the
other from Proposition \ref{p:sub} and Lemma \ref{l:claim5}.
\end{proof}

Theorem \ref{t:char} could be further modified: to check whether a
w-convergent sequence is d-convergent,\ it is sufficient to prove (ii)\ only
for closed $\mu$-separating sets.

The following important Corollary shows that the dynamical topology is in some
sense sufficiently close and similar to the weak*\ topology, and as such is
expected to have various applications.

\begin{corollary}
Assume that $X$ is connected. If supp $\mu=X$, then $\mu_{n}$ d-converges to
$\mu$ if and only if it w-converges to $\mu$.
\end{corollary}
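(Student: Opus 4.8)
The plan is to reduce everything to the characterization of d-convergence in Theorem \ref{t:char}. The forward implication (d-convergence implies w-convergence) is immediate from Proposition \ref{p:sub} and uses no hypothesis on $X$ or $\mu$. For the converse, I would suppose that $\mu_n$ w-converges to $\mu$, so that condition (i) of Theorem \ref{t:char} already holds, and then show that the hypotheses supp$(\mu)=X$ and $X$ connected make condition (ii) automatic. The heart of the argument is that these hypotheses force the collection of $\mu$-separating sets to be essentially trivial.

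The key step is an observation about $\mu$-separating sets. Let $A$ be $\mu$-separating, witnessed by an open $D$ with $Cl(A)\subseteq D$ and $\mu(D\setminus A)=0$. Since $A\subseteq Cl(A)$, the set $D\setminus Cl(A)=D\cap Cl(A)^{c}$ is open and contained in $D\setminus A$, so it has $\mu$-measure zero. Because supp$(\mu)=X$, every nonempty open set has positive $\mu$-measure, forcing $D\setminus Cl(A)=\emptyset$, i.e. $D\subseteq Cl(A)$. Combined with $Cl(A)\subseteq D$ this gives $D=Cl(A)$, so $Cl(A)$ is at once open and closed.

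Connectedness of $X$ then yields the dichotomy $Cl(A)=\emptyset$ or $Cl(A)=X$, and I would verify condition (ii) in each case. If $A=\emptyset$, I take $B=\emptyset$ (the $\varepsilon$-neighborhood of the empty set), so the only admissible open $C$ is $\emptyset$ and $\mu_n(\emptyset)=\mu(\emptyset)=0$. If $Cl(A)=X$, then $A$ is dense, so the only neighborhood $B\supseteq Cl(A)=X$ is $B=X$, whence the only open $C$ with $X\subseteq C\subseteq X$ is $C=X$; since the measures are probability measures, $\mu_n(X)=1=\mu(X)$ for every $n$. In both cases (ii) holds, so Theorem \ref{t:char} delivers d-convergence.

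I do not anticipate a serious obstacle: once Theorem \ref{t:char} is available, the entire content is the clopen-ness argument of the second paragraph. The only point deserving care is the degenerate case $A=\emptyset$ together with the precise reading of ``neighborhood of $A$'' in condition (ii); I would check explicitly that the empty set is $\mu$-separating (with witness $D=\emptyset$) and that the empty neighborhood is admissible, so that (ii) is genuinely vacuous there rather than an unmet requirement.
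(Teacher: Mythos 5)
Your proof is correct and takes essentially the same approach as the paper, whose entire argument is the one-line assertion that under these hypotheses there are no $\mu$-separating sets, so that condition (ii) of Theorem \ref{t:char} is vacuous. Your version is in fact slightly more careful than the paper's: trivial $\mu$-separating sets do exist (the empty set, and dense full-measure sets with $Cl(A)=X$), and your clopen-ness argument correctly reduces to exactly these two cases and verifies condition (ii) for each rather than claiming they cannot occur.
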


\begin{proof}
Indeed, if $X$ is connected and supp $\mu=X$, then there are no $\mu
$-separating sets.
\end{proof}

Now we give yet another characterization of d-convergence. By definition, a
sequence $\mu_{n}\in P(X)$ d-converges to a measure $\mu\in P(X)$, if there
exists a sequence of measurable functions $f_{n},g_{n}:I\rightarrow X$, such
that $\mu_{n}=(f_{n})_{\sharp}\lambda$, $\mu=(g_{n})_{\sharp}\lambda$, and
$D(f_{n},g_{n})\rightarrow0$. Now we show that $g_{n}$ can be independent of
$n$, and put it in the context of the well-known Skorokhod Theorem (see e.g.
\cite{Jacobs:78}).

\begin{theorem}
(Skorokhod) Assume that $X$ is metrizable, separable, and complete. A sequence
of measures $\mu_{n}\in P(X)$ is $w$-convergent, if and only if there exists a
sequence of measurable functions $f_{n}:I\rightarrow X$, and a function
$f:I\rightarrow X$, such that for all $a\in I$, $f_{n}(a)\rightarrow f(a)$,
and such that $\mu_{n}={f_{n}}_{\sharp}\lambda$, and $\mu=f_{\sharp}\lambda$.
\end{theorem}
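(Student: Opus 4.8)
The plan is to establish the two implications separately, treating the reverse one (representation $\Rightarrow$ $w$-convergence) as routine and concentrating on the forward implication, which is the Skorokhod representation proper. For the reverse direction, suppose $f_{n}\to f$ pointwise on $I$ with $(f_{n})_{\sharp}\lambda=\mu_{n}$ and $f_{\sharp}\lambda=\mu$. For any bounded continuous $h$ the functions $h\circ f_{n}$ are uniformly bounded and converge pointwise to $h\circ f$, so dominated convergence gives $\int_{X}h\,d\mu_{n}=\int_{I}h\circ f_{n}\,d\lambda\to\int_{I}h\circ f\,d\lambda=\int_{X}h\,d\mu$, and hence $\mu_{n}$ $w$-converges to $\mu$.

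The substance lies in the forward implication, which I would prove by a nested-partition coupling construction. For each level $m$ I would use separability to choose a countable Borel partition $\{A_{i}^{m}\}_{i}$ of $X$ into sets of diameter at most $2^{-m}$ whose boundaries are $\mu$-null, arranged so that level $m+1$ refines level $m$; the null boundaries guarantee, via Proposition \ref{p:basic}(iii), that $\mu_{n}(A_{i}^{m})\to\mu(A_{i}^{m})$ as $n\to\infty$. Choosing a point $x_{i}^{m}\in A_{i}^{m}$ and cutting $I$ into consecutive intervals $J_{i}^{m}$ of length $\mu(A_{i}^{m})$, nested consistently with the sets, I define $f^{(m)}(a)=x_{i}^{m}$ for $a\in J_{i}^{m}$. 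The nesting forces $D(f^{(m)},f^{(m+1)})\le 2^{-m}$, so $(f^{(m)})$ is uniformly Cauchy pointwise and, by completeness of $X$, converges uniformly to a function $f$; since $D(f^{(m)},f)\to 0$ and the atomic measures $f^{(m)}_{\sharp}\lambda$ $w$-converge to $\mu$, Proposition \ref{p:sub} gives $f_{\sharp}\lambda=\mu$. Repeating the construction with the interval partition $J_{i}^{m,n}$ of lengths $\mu_{n}(A_{i}^{m})$ (same index ordering and nesting) yields $f_{n}^{(m)}$ and, in the limit over $m$, a function $f_{n}$ with $(f_{n})_{\sharp}\lambda=\mu_{n}$ and $d(f_{n}^{(m)}(a),f_{n}(a))\le 2^{-m+1}$ uniformly in $n$.

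The pointwise convergence $f_{n}\to f$ then comes from aligning the two interval families. Because $\mu_{n}(A_{i}^{m})\to\mu(A_{i}^{m})$, the endpoints of $J_{i}^{m,n}$ converge to those of $J_{i}^{m}$, so for every $a$ that is not an endpoint at any level (a countable, hence $\lambda$-null, set of exceptions) and every fixed $m$ we have $f_{n}^{(m)}(a)=f^{(m)}(a)$ for all large $n$. The triangle inequality together with the uniform estimates $d(f_{n}(a),f_{n}^{(m)}(a))\le 2^{-m+1}$ and $d(f(a),f^{(m)}(a))\le 2^{-m+1}$ then yields $d(f_{n}(a),f(a))\le 2^{-m+2}$ for all large $n$, that is $f_{n}(a)\to f(a)$ off the null exceptional set. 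Finally I would redefine every $f_{n}$ and $f$ to equal a fixed point $x_{0}$ on that null set; this alters none of the pushforward measures and upgrades convergence from almost-everywhere to everywhere, as required.

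The main obstacle is the geometric bookkeeping of the forward direction: producing, in a general separable complete metric space, the nested family of small-diameter partitions with $\mu$-null boundaries (using that for each chosen ball only countably many radii give a non-null sphere, and that a measure on a Polish space is tight), and, when the partitions are genuinely countably infinite, controlling the tails so that the interval endpoints still converge uniformly enough to force the pointwise identity $f_{n}^{(m)}(a)=f^{(m)}(a)$ for large $n$. Once that alignment is secured, the convergence itself is just the elementary triangle-inequality estimate above.
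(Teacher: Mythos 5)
The paper does not prove this theorem at all: it is quoted as a known result (Skorokhod's representation theorem, with a pointer to \cite{Jacobs:78}) and used only to motivate the corollary that follows it. So there is no in-paper argument to compare yours against; what you have written is essentially the standard textbook proof (Billingsley-style nested-partition construction), and it is correct in outline. The reverse implication via dominated convergence is fine. In the forward direction the key points all check out: the refining partitions $\{A_i^m\}$ with $\mu$-null boundaries give $\mu_n(A_i^m)\to\mu(A_i^m)$; the nesting of the intervals forces $d(f^{(m)},f^{(m+1)})\le 2^{-m}$ uniformly, so completeness yields the uniform limits $f$ and $f_n$ with the right pushforwards (the identification $f_\sharp\lambda=\mu$ follows from uniform convergence of representatives plus weak convergence of the atomic approximations and uniqueness of weak limits); and for fixed $a$ in the interior of its level-$m$ interval the relevant endpoints are finite partial sums $\sum_{i'<i_0}\mu_n(A_{i'}^m)$, so their convergence needs no uniform tail control --- your worry on that score is overcautious. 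The remaining gaps are the ones you already name and are routine: producing the null-boundary partitions (spheres of all but countably many radii around a countable dense set are $\mu$-null, then disjointify and intersect with the previous level), discarding the countable set of interval endpoints and points falling in degenerate intervals, and redefining everything to a fixed $x_0$ on that $\lambda$-null set to upgrade almost-everywhere to everywhere convergence. One cosmetic remark: you invoke Propositions \ref{p:basic} and \ref{p:sub}, which the paper states for compact $X$, whereas this theorem is set on a Polish space; both facts you need (the Portmanteau consequence for null-boundary sets, and that uniform convergence of representatives implies weak convergence of pushforwards) hold in the Polish setting by elementary direct arguments, so it is cleaner to prove them in place than to cite the compact-case propositions.
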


\begin{corollary}
Assume that $X$ is metrizable. A sequence of measures $\mu_{n}\in P(X)$ is
d-convergent, if and only if there exists a sequence of measurable functions
$f_{n}:I\rightarrow X$, and a function $f:I\rightarrow X$, such that
$f_{n}\rightarrow f$ as $n\rightarrow\infty$, \textbf{uniformly} on $I$, and
such that $\mu_{n}={f_{n\sharp}}\lambda$, and $\mu=f_{\sharp}\lambda$.
\end{corollary}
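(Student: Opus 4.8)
The plan is to prove both implications directly from the metric $\Delta$, the real work being in the forward direction, where the representation of the limit must be made independent of $n$. The ``if'' direction is immediate: if $f_{n}\rightarrow f$ uniformly on $I$ with $\mu_{n}=f_{n\sharp}\lambda$ and $\mu=f_{\sharp}\lambda$, then using the constant sequence $f$ as a representation of $\mu$ gives $\Delta(\mu_{n},\mu)\leq D(f_{n},f)=\sup_{a\in I}d(f_{n}(a),f(a))\rightarrow0$, so $\mu_{n}$ d-converges to $\mu$.

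For the ``only if'' direction, fix once and for all a representation $f:I\rightarrow X$ of $\mu$, which exists by Corollary \ref{t:big}. The entire content is the following \emph{synchronization} claim: for every $\nu\in P(X)$ and every $\varepsilon>0$ there is a measurable $g:I\rightarrow X$ with $\nu=g_{\sharp}\lambda$ and $D(f,g)\leq\Delta(\mu,\nu)+\varepsilon$; that is, the infimum defining $\Delta(\mu,\cdot)$ can be realized, up to $\varepsilon$, with the \emph{prescribed} representation $f$. Granting this, I would apply it with $\nu=\mu_{n}$ and $\varepsilon=1/n$ to obtain, for each $n$, a representation $f_{n}$ of $\mu_{n}$ with $D(f_{n},f)\leq\Delta(\mu_{n},\mu)+1/n$. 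As $\mu_{n}$ d-converges to $\mu$, the right-hand side tends to $0$, hence $f_{n}\rightarrow f$ uniformly on $I$, which is exactly the assertion.

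To prove the synchronization claim I would \emph{not} try to reconcile $f$ with a near-optimal representation of $\mu$ via Lemma \ref{l:topology}: that lemma reparametrizes \emph{both} functions by measure-preserving maps $h_{1},h_{2}:I\rightarrow I$ that are typically non-injective (they arise as coordinate projections of an isomorphism $I\rightarrow I^{2}$), so there is no way to ``undo'' the reparametrization on the fixed side $f$. Instead I would argue combinatorially, leaving $f$ untouched and re-splitting its fibres. Choose a near-optimal coupling, i.e. measurable $u,v:I\rightarrow X$ with $\mu=u_{\sharp}\lambda$, $\nu=v_{\sharp}\lambda$ and $D(u,v)<\Delta(\mu,\nu)+\varepsilon/2$, and set $\gamma=(u,v)_{\sharp}\lambda$, a transport of $\mu$ to $\nu$ concentrated on $\{(x,y):d(x,y)<\Delta(\mu,\nu)+\varepsilon/2\}$. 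Using compactness of $X$, fix a measurable partition $C_{1},\dots,C_{k}$ of $X$ with $\operatorname{diam}C_{i}<\varepsilon/2$, and put $p_{ij}=\gamma(C_{i}\times C_{j})$, so that $\sum_{j}p_{ij}=\mu(C_{i})=\lambda(f^{-1}(C_{i}))$ and $\sum_{i}p_{ij}=\nu(C_{j})$. On each fibre block $f^{-1}(C_{i})$ choose pairwise disjoint measurable $E_{ij}$ with $\lambda(E_{ij})=p_{ij}$ (possible since $\lambda$ restricted to $f^{-1}(C_{i})$ is nonatomic of total mass $\mu(C_{i})=\sum_{j}p_{ij}$); then on $F_{j}=\bigcup_{i}E_{ij}$, which satisfies $\lambda(F_{j})=\nu(C_{j})$, define $g$ so that $(g|_{F_{j}})_{\sharp}(\lambda|_{F_{j}})=\nu|_{C_{j}}$, again by Corollary \ref{t:big}. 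Since the $F_{j}$ partition $I$ up to a null set, $g_{\sharp}\lambda=\sum_{j}\nu|_{C_{j}}=\nu$. If $p_{ij}>0$ there is a point $(x,y)\in C_{i}\times C_{j}$ with $d(x,y)<\Delta(\mu,\nu)+\varepsilon/2$, so for a.e. $a\in E_{ij}$, where $f(a)\in C_{i}$ and $g(a)\in\overline{C_{j}}$, we get $d(f(a),g(a))\leq\operatorname{diam}C_{i}+d(x,y)+\operatorname{diam}C_{j}<\Delta(\mu,\nu)+3\varepsilon/2$; blocks with $p_{ij}=0$ are null. Hence $D(f,g)\leq\Delta(\mu,\nu)+2\varepsilon$, which proves the claim after relabelling $\varepsilon$.

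The main obstacle is exactly this synchronization step: the definition of $\Delta$ quantifies over representations of \emph{both} measures at once, whereas the statement demands a \emph{single} representation $f$ of the limit $\mu$ valid for all $n$ with uniform, not merely pointwise, convergence. The resolution is the fibre re-splitting above, which realizes an arbitrary near-optimal transport plan \emph{through} the fixed map $f$ without reparametrizing it, thereby bypassing the non-invertibility of the measure-preserving maps produced by Lemma \ref{l:topology}. This construction also makes precise in what sense the corollary is a uniform-convergence strengthening of the Skorokhod representation theorem quoted above.
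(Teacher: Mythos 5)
Your proof is correct, and it takes a genuinely different route from the paper's. The paper obtains the forward direction by revisiting the proof of Theorem \ref{t:char}: it re-runs the construction of Lemma \ref{l:claim4} with $\varepsilon=1/2^{n}$ and nested refinements of the covers $A_{1},\dots,A_{m}$, so that the representation $f$ of the limit is consistent across scales; this leans on the whole $\mu$-separating-set machinery and on the combinatorial Lemma \ref{l:claim2}. You instead isolate a clean synchronization lemma --- for a \emph{fixed} representation $f$ of $\mu$, the infimum defining $\Delta(\mu,\nu)$ is attained up to $\varepsilon$ by pairs of the form $(f,g)$ --- and prove it directly from the definition of $\Delta$ by discretizing a near-optimal coupling $\gamma$ and re-splitting the fibres $f^{-1}(C_{i})$ into pieces $E_{ij}$ of mass $p_{ij}=\gamma(C_{i}\times C_{j})$. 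This is more self-contained (no appeal to the characterization theorem), and the synchronization statement is of independent interest: it shows that $\Delta(\mu,\nu)=\inf_{g}D(f,g)$ over $g$ with $g_{\sharp}\lambda=\nu$, for \emph{any} fixed $f$ with $f_{\sharp}\lambda=\mu$, something the paper never states explicitly; your remark on why Lemma \ref{l:topology} cannot be used here (the reparametrizations $h_{1},h_{2}$ are non-invertible, so they cannot be undone on the fixed side) is also accurate. One small point to tighten: since $D$ is a genuine supremum rather than an essential supremum, your estimate $d(f(a),g(a))<\Delta(\mu,\nu)+3\varepsilon/2$ is only established for $a$ outside a $\lambda$-null set (the points where $g(a)\notin\overline{C_{j}}$, the blocks with $p_{ij}=0$, and the complement of $\bigcup_{j}F_{j}$); redefining $g:=f$ on that null set leaves $g_{\sharp}\lambda$ unchanged and makes the bound hold for every $a\in I$, after which the conclusion $D(f,g)\leq\Delta(\mu,\nu)+2\varepsilon$ and the final step with $\varepsilon=1/n$ go through exactly as you describe.
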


\begin{proof}
$\Longleftarrow:$ It follows directly from the definition of $\Delta$.
\medskip

$\Longrightarrow:$ Assume that $\mu_{n}$ d-converges to $\mu$. We can adjust
the construction of the function $f:I\rightarrow X$ in the proof of the
Theorem \ref{t:char} so that $\mu=f_{\sharp}\lambda$, and so that $f$ is
independent of $\varepsilon$. We do it by choosing $\varepsilon=1/2^{n}$, and
constructing the cover $A_{1},...,A_{m}$ for a chosen $\varepsilon$ to be a
refinement of the cover $A_{1}^{\prime},...,A_{m^{\prime}}^{\prime}$ for
another $\varepsilon^{\prime}>\varepsilon$. We can also, using the same
construction, find a sequence $f_{n}:I\rightarrow X$ so that $D(f_{n}%
,f)\rightarrow0$, and that $\mu_{n}=f_{n\sharp}\lambda$, which proves the claim.
\end{proof}

\section{Stability of invariant measures}

We now show that various notions of stability, including Lyapunov stability,
attracting sets, asymptotic stability, and Nekhoroshev stability, generalize
well to invariant measures. As in the previous sections, $X$ is a compact
metric space. In this section, $f$ is a continuous function on $X$, and
$f_{\sharp}$ is then a d-continuous function on $P(X)$, $f_{\sharp}\mu
(A)=\mu(f^{-1}(A))$ for all\ Borel measurable $A$.

\begin{definition}
For all Borel measurable $A\subseteq X$, we call the set $A_{\sharp}$ the
\textbf{lift} of the set $A$, defined as the set of all measures $\mu\in
P(X)$, such that supp$(\mu)\subseteq A$.
\end{definition}

\begin{lemma}
\label{l:elementary}Let $A,B,A_{i}$, $i=1,...,\infty$, be measurable subsets
of $X$. Then (i)\ $A=B$ if and only if $A_{\sharp}=B_{\sharp}$%
;\ (ii)\ $A\subseteq B$ if and only if $A_{\sharp}\subseteq B_{\sharp}$; (iii)
$(A\cap B)_{\sharp}=A_{\sharp}\cap B_{\sharp}$; (iv) $(%
{\textstyle\bigcap_{i=1}^{\infty}}
A_{i})_{\sharp}=$ $%
{\textstyle\bigcap_{i=1}^{\infty}}
(A_{i})_{\sharp}$; (v) $(A^{c})_{\sharp}\subseteq(A_{\sharp})^{c}$; (vi)
$A_{\sharp}\cup B_{\sharp}\subseteq(A\cup B)_{\sharp}$.
\end{lemma}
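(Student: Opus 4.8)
The plan is to reduce all six claims to two elementary facts about supports of probability measures: for each point $x\in X$ the Dirac measure satisfies $\mathrm{supp}(\delta_x)=\{x\}$ (since $X$ is a metric space, $\{x\}$ is closed and every other point has a neighborhood of zero $\delta_x$-measure), and every $\mu\in P(X)$ has \emph{nonempty} support (otherwise $X$ would be the largest open null set, forcing $\mu(X)=0$, contradicting $\mu(X)=1$; here separability of the compact metric space $X$ guarantees the support is well defined and of full measure). Throughout I use the defining equivalence $\mu\in A_\sharp\iff\mathrm{supp}(\mu)\subseteq A$, valid for every measurable $A$ because $\mathrm{supp}(\mu)$ is closed.

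I would prove (ii) first and deduce (i) from it. For (ii) the forward direction is immediate: if $A\subseteq B$ and $\mathrm{supp}(\mu)\subseteq A$, then $\mathrm{supp}(\mu)\subseteq B$, so $A_\sharp\subseteq B_\sharp$. For the converse I use Dirac measures as test objects: if $A_\sharp\subseteq B_\sharp$ and $x\in A$, then $\delta_x\in A_\sharp\subseteq B_\sharp$, so $\{x\}=\mathrm{supp}(\delta_x)\subseteq B$, i.e.\ $x\in B$; hence $A\subseteq B$. Claim (i) then follows by applying (ii) in both directions, since $A=B$ is equivalent to the pair $A\subseteq B$ and $B\subseteq A$, which by (ii) is equivalent to $A_\sharp\subseteq B_\sharp$ and $B_\sharp\subseteq A_\sharp$, i.e.\ to $A_\sharp=B_\sharp$.

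Claims (iii), (iv) and (vi) are then pure set logic. For (iii) and (iv) I would observe that $\mathrm{supp}(\mu)\subseteq\bigcap_i A_i$ holds exactly when $\mathrm{supp}(\mu)\subseteq A_i$ for every $i$, which unwinds to $\mu\in\bigcap_i (A_i)_\sharp$; the finite intersection (iii) is the special case of two sets. For (vi), the monotonicity established as the forward direction of (ii) gives $A_\sharp\subseteq(A\cup B)_\sharp$ and $B_\sharp\subseteq(A\cup B)_\sharp$, whence $A_\sharp\cup B_\sharp\subseteq(A\cup B)_\sharp$.

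The only place that genuinely needs the nonemptiness of the support is (v), and this is the main (if mild) obstacle. If $\mu\in(A^c)_\sharp$ then $\mathrm{supp}(\mu)\subseteq A^c$; were $\mu$ also in $A_\sharp$ we would get $\mathrm{supp}(\mu)\subseteq A\cap A^c=\emptyset$, contradicting $\mathrm{supp}(\mu)\neq\emptyset$. Hence $\mu\notin A_\sharp$, i.e.\ $\mu\in(A_\sharp)^c$, proving $(A^c)_\sharp\subseteq(A_\sharp)^c$. The same structural point explains why (v) and (vi) are only inclusions rather than equalities: the support map is monotone but does not distribute over unions or complements, since a measure spread across $A\cup B$ may have support meeting both sets and hence lie in neither $A_\sharp$ nor $B_\sharp$. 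That asymmetry is exactly what forces (v) and (vi) to be one-directional, in contrast to the exact equalities in (iii) and (iv).
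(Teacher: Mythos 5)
Your proof is correct and takes the same direct route the paper intends: the paper simply asserts that (i)--(vi) ``follow directly from the definition of the lift,'' and your argument supplies exactly the two details that genuinely need justification, namely testing with Dirac measures $\delta_x$ for the converse directions of (i) and (ii), and the nonemptiness of $\mathrm{supp}(\mu)$ for (v). Nothing further is needed.
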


All properties (i)-(vi)\ follow directly from the definition of the lift. Note
that in general in (v) and (vi)\ equality does not hold, so lift $\sharp$ is
not a morphism of set algebras.

\begin{lemma}
\label{l:seemstrivial}If $f:X\rightarrow X$ is continuous, then $f_{\sharp
}(A_{\sharp})=f(A)_{\sharp}$.
\end{lemma}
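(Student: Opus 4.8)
The plan is to prove the two set inclusions $f_{\sharp}(A_{\sharp})\subseteq f(A)_{\sharp}$ and $f(A)_{\sharp}\subseteq f_{\sharp}(A_{\sharp})$ separately, using throughout that the support of a measure is compact (being closed in the compact space $X$) and that $f$, being continuous on a compact space, carries compact sets to compact sets.

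For the first inclusion, take $\mu\in A_{\sharp}$, so that $K:=\operatorname{supp}(\mu)\subseteq A$ is compact. I would first show $\operatorname{supp}(f_{\sharp}\mu)\subseteq f(K)$. Indeed, $f(K)$ is compact, hence closed, and since $K\subseteq f^{-1}(f(K))$ we get $f_{\sharp}\mu\bigl(f(K)^{c}\bigr)=\mu\bigl(f^{-1}(f(K))^{c}\bigr)\leq\mu(K^{c})=0$; thus $f(K)$ is a closed set of full $f_{\sharp}\mu$-measure and so contains $\operatorname{supp}(f_{\sharp}\mu)$. As $f(K)\subseteq f(A)$, this yields $f_{\sharp}\mu\in f(A)_{\sharp}$. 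This step is routine.

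The reverse inclusion is the main obstacle. Given $\nu\in f(A)_{\sharp}$, I must produce $\mu\in A_{\sharp}$ with $f_{\sharp}\mu=\nu$, i.e.\ I must \emph{lift} $\nu$ through $f$ while keeping its support inside $A$. Set $K:=\operatorname{supp}(\nu)$, a compact subset of $f(A)$, and $L:=A\cap f^{-1}(K)$. Every $y\in K\subseteq f(A)$ has a preimage lying in $A$, which then lies in $L$, so $f$ restricts to a surjection $f|_{L}\colon L\to K$. The plan is to choose a measurable right inverse $s\colon K\to L$ with $f\circ s=\mathrm{id}_{K}$, and set $\mu:=s_{\sharp}\nu$. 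Then $f_{\sharp}\mu=(f\circ s)_{\sharp}\nu=\nu$, and $\operatorname{supp}(\mu)\subseteq\overline{s(K)}\subseteq L\subseteq A$ (using that, for closed $A$, the set $L$ is compact), so $\mu\in A_{\sharp}$ and the inclusion follows.

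The one nontrivial ingredient is the existence of the section $s$: this is a measurable selection of the fiber multifunction $y\mapsto f^{-1}(y)\cap A$, which has nonempty compact values on $K$. When $A$ is closed (so $L$ is compact), $f|_{L}$ is a continuous surjection between compact metric spaces and a Borel section exists by the classical measurable selection theorem; since we only push $\nu$ forward, a $\nu$-measurable section suffices, and for merely measurable $A$ one may instead invoke a universally measurable selection applied to the analytic graph. Equivalently, and more in the spirit of the earlier sections, one can represent $\nu=g_{\sharp}\lambda$ by Corollary \ref{t:big} with $g(t)\in K$ for a.e.\ $t$, select measurably a function $h\colon I\to A$ with $f\circ h=g$ a.e., and put $\mu=h_{\sharp}\lambda$. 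I expect verifying measurability of the selection to be the only genuine work; everything else is bookkeeping with supports and pushforwards.
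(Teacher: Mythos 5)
Your argument is correct (for closed $A$) but takes a genuinely different route from the paper in the nontrivial direction $f(A)_{\sharp}\subseteq f_{\sharp}(A_{\sharp})$. You lift $\nu$ all at once by constructing a measurable section $s$ of $f$ restricted to $L=A\cap f^{-1}(\operatorname{supp}\nu)$ and setting $\mu=s_{\sharp}\nu$; this makes $f_{\sharp}\mu=\nu$ and $\operatorname{supp}\mu\subseteq L\subseteq A$ immediate, but it imports a measurable selection theorem (Federer--Morse or Kuratowski--Ryll-Nardzewski) as the engine of the proof. The paper avoids selection theorems entirely: it approximates $\nu$ in the weak* topology by finitely supported measures $\nu^{n}=\sum_{k}\lambda_{k}^{n}\delta(y_{k}^{n})$ with atoms in $\operatorname{supp}\nu$, lifts each atom by an arbitrary choice of preimage $x_{k}^{n}\in A$, and takes a weak* limit point of the lifted measures, using weak* compactness of $P(X)$ and weak* continuity of $f_{\sharp}$. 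So the paper's proof is more elementary (pointwise choice plus compactness) at the price of an indirect limit argument, while yours is more direct but leans on descriptive-set-theoretic machinery. One caveat, which you correctly flag and which the paper shares: both arguments really use that $A$ is closed. In your version $\overline{s(K)}\subseteq L$ needs $L$ closed, and your fallback for merely measurable $A$ via universally measurable selections only yields $\mu(A)=1$, which for non-closed $A$ is strictly weaker than $\operatorname{supp}\mu\subseteq A$ and hence does not place $\mu$ in $A_{\sharp}$; in the paper's version the weak* limit of measures supported on finite subsets of $A$ is only guaranteed to be supported on $Cl(A)$ (and its intermediate assertion $f^{-1}(\operatorname{supp}\mu)\subseteq A$ is not literally correct). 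Since the lemma is applied in the sequel to closed invariant sets and to $\varepsilon$-neighborhoods, this is a shared limitation rather than a defect specific to your approach.
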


\begin{proof}
$\subseteq$: Let $\mu\in f_{\sharp}(A_{\sharp})$. Then for some $\nu\in P(X)$,
$\mu=f_{\sharp}\nu$, and supp$(\nu)\subseteq A$. Since $f$ is continuous and
$X$ compact, $f($supp $(\nu))=$supp$(f_{\sharp}\nu)=$supp$(\mu)$, hence
supp$(\mu)\subseteq f(A)$.

$\supseteq:$ Let $\mu\in f(A)_{\sharp}$, i.e. supp$(\mu)\subseteq f(A)$. Let
$\mu^{n}=\sum_{k=1}^{m_{n}}\lambda_{k}^{n}\delta(y_{k}^{n})$ be any sequence
of finitely supported measures which w-converges to $\mu$, and such that for
all $n,k$, $y_{k}^{n}\in$supp$(\mu)$; let $x_{k}^{n}\in A$ be any sequence
such that $f(x_{k}^{n})=y_{k}^{n}$, and let $\nu$ be the limit point of a
w-convergent subsequence of $\nu^{n}=\sum_{k=1}^{m_{n}}\lambda_{k}^{n}%
\delta(x_{k}^{n})$. Then supp$(\nu)\subseteq f^{-1}($supp$(\mu))\subseteq A$,
and because of continuity of $f$, $\mu=f(\nu)$, hence $\mu\in f_{\sharp
}(A_{\sharp})$.
\end{proof}

In the following, $\varepsilon$-neighborhoods of sets of measures and other
properties in $P(X)$ are always with respect to the dynamical topology, unless
specified otherwise.

The key property of the dynamical topology is the following lemma:

\begin{lemma}
\label{l:help}Suppose $A\subseteq X$ is a closed set, and let $U\subseteq X$
be an open set. Then $U$ is the $\varepsilon$-neighborhood of $A$ if and only
if $U_{\sharp}$ is the $\varepsilon$-neighborhood of $A_{\sharp}$.
\end{lemma}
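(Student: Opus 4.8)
The plan is to reduce the stated biconditional to the single set identity
\[
\mathcal{N}_{\varepsilon}(A_{\sharp}) = (N_{\varepsilon}(A))_{\sharp},
\]
where $N_{\varepsilon}(A)\subseteq X$ denotes the $\varepsilon$-neighborhood of $A$ (an open set, by the definition recalled before the Lemma) and $\mathcal{N}_{\varepsilon}(A_{\sharp})\subseteq P(X)$ denotes the $\varepsilon$-neighborhood of $A_{\sharp}$ in the dynamical topology, i.e. the set of $\mu$ with $\Delta(\mu,\nu)<\varepsilon$ for some $\nu\in A_{\sharp}$. Once this identity is established, the Lemma follows immediately: if $U$ is the $\varepsilon$-neighborhood of $A$, then $U=N_{\varepsilon}(A)$, hence $U_{\sharp}=(N_{\varepsilon}(A))_{\sharp}=\mathcal{N}_{\varepsilon}(A_{\sharp})$; and conversely, if $U_{\sharp}=\mathcal{N}_{\varepsilon}(A_{\sharp})=(N_{\varepsilon}(A))_{\sharp}$, then $U=N_{\varepsilon}(A)$ by injectivity of the lift, Lemma \ref{l:elementary}(i). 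The degenerate case $A=\emptyset$ is trivial, so I would assume $A\neq\emptyset$.

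For the inclusion $\mathcal{N}_{\varepsilon}(A_{\sharp})\subseteq(N_{\varepsilon}(A))_{\sharp}$ I would take $\mu$ with $\Delta(\mu,\nu)<\varepsilon$ for some $\nu$ with supp$(\nu)\subseteq A$, and invoke Proposition \ref{p:haus}(i): $d_{H}(\mathrm{supp}(\mu),\mathrm{supp}(\nu))\leq\Delta(\mu,\nu)<\varepsilon$. In particular each point of supp$(\mu)$ lies within distance less than $\varepsilon$ of supp$(\nu)\subseteq A$, so supp$(\mu)\subseteq N_{\varepsilon}(A)$, i.e. $\mu\in(N_{\varepsilon}(A))_{\sharp}$. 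This direction is essentially a direct reading of the Hausdorff estimate already proved.

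The reverse inclusion $(N_{\varepsilon}(A))_{\sharp}\subseteq\mathcal{N}_{\varepsilon}(A_{\sharp})$ is the substantive part. Given $\mu$ with supp$(\mu)\subseteq N_{\varepsilon}(A)$, I would first exploit compactness: since supp$(\mu)$ is compact and $x\mapsto d(x,A)$ is continuous, the quantity $\varepsilon'=\max_{x\in\mathrm{supp}(\mu)}d(x,A)$ is attained and satisfies $\varepsilon'<\varepsilon$. Fix $\eta>0$ with $\varepsilon'+\eta<\varepsilon$. Then I would construct an approximant $\nu\in A_{\sharp}$ by a discretization modeled on the proof of Proposition \ref{p:dense}: choose a representative $f:I\rightarrow X$ with $\mu=f_{\sharp}\lambda$ and $f(I)\subseteq\mathrm{supp}(\mu)$ (Corollary \ref{t:big} together with Lemma \ref{l:simple}); cover supp$(\mu)$ by finitely many pairwise disjoint measurable sets $C_{i}$ of diameter less than $\eta$; in each $C_{i}$ meeting supp$(\mu)$ pick a point $x_{i}$ and a nearest point $y_{i}\in A$, so $d(x_{i},y_{i})=d(x_{i},A)\leq\varepsilon'$; and define $g(t)=y_{i}$ for $t\in f^{-1}(C_{i})$. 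Then $\nu=g_{\sharp}\lambda$ is supported on $\{y_{i}\}\subseteq A$, and for $t\in f^{-1}(C_{i})$ the triangle inequality gives $d(f(t),g(t))\leq\mathrm{diam}(C_{i})+d(x_{i},y_{i})<\eta+\varepsilon'<\varepsilon$, whence $D(f,g)<\varepsilon$ and $\Delta(\mu,\nu)\leq D(f,g)<\varepsilon$, so $\mu\in\mathcal{N}_{\varepsilon}(A_{\sharp})$.

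The main obstacle lies in this last inclusion, and specifically in securing a \emph{strict} bound $\Delta(\mu,\nu)<\varepsilon$: the hypothesis supp$(\mu)\subseteq N_{\varepsilon}(A)$ only gives $d(x,A)<\varepsilon$ pointwise, and it is the compactness of supp$(\mu)$ that upgrades this to the uniform bound $\varepsilon'<\varepsilon$, leaving room for the discretization error $\eta$. I would use the discretization precisely to avoid a measurable selection of nearest points $x\mapsto y\in A$, keeping the argument within the elementary techniques already developed in Section 2 and in Proposition \ref{p:dense}.
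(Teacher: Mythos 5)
Your proof is correct and follows essentially the same route as the paper's: the inclusion $\mathcal{N}_{\varepsilon}(A_{\sharp})\subseteq (N_{\varepsilon}(A))_{\sharp}$ is exactly the support estimate of Proposition \ref{p:haus}(i) (which the paper re-derives in place via Lemma \ref{l:simple}), and the reverse inclusion is the same finite discretization plus nearest-point projection into $A$ that the paper carries out through Proposition \ref{p:dense} followed by a triangle inequality. The only detail to tidy is that you should choose $x_{i}\in C_{i}\cap\mathrm{supp}(\mu)$ so that $d(x_{i},A)\leq\varepsilon'$ actually holds; picking an arbitrary $x_{i}\in C_{i}$ only gives $d(x_{i},A)\leq\varepsilon'+\eta$, which still closes the argument but requires $\varepsilon'+2\eta<\varepsilon$.
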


\begin{proof}
$\Rightarrow:$ Denote by $\mathcal{V}$ the $\varepsilon$-neighborhood of
$A_{\sharp}$ in $P(X)$.

(i)\ \textit{Claim:\ }$U_{\sharp}\subseteq\mathcal{V}$\textit{.} Choose any
$\mu\in U_{\sharp}$, and then by definition supp$(\mu)\subseteq U$. Since $X$
is compact and supp$(\mu)$ closed, there exists $\delta>0$ such that
supp$(\mu)$ is a subset of a ($\varepsilon-2\delta$)-neighborhood of $A$. Now,
Proposition \ref{p:dense} implies that there is a measure $\nu_{\delta}%
\,$\ supported on a finite set $\left\{  x_{1},...,x_{k}\right\}  $ such that
$\Delta(\mu,\nu_{\delta})<\delta$. Without loss of generality we also assume
that $\nu_{\delta}(\left\{  x_{i}\right\}  )>0$ for all $i$, and then it is
easy to see that for all $1\leq i\leq k$, $x_{i}\,\ $is in the ($\varepsilon
-\delta$)-neighborhood of $A$. We now choose $y_{1},...,y_{k}\in A$, such that
$d(x_{i},y_{i})<\varepsilon-\delta$, and define $\nu=%
{\textstyle\sum_{i=1}^{k}}
\nu_{\delta}(\left\{  x_{i}\right\}  )\delta_{y_{i}}$. Now, $\nu\in A_{\sharp
}$, and because of the choice of $x_{i}$ and $y_{i}$ it is $\Delta(\nu
_{\delta},\nu)\leq\varepsilon-\delta$, hence by triangle inequality
$\Delta(\mu,\nu)<\varepsilon$.

(ii) \textit{Claim:\ }$\mathcal{V\subseteq}U_{\sharp}$. Choose any $\mu
\in\mathcal{V}$, and by definition of $\mathcal{V}$ we can find $\nu\in$
$A_{\sharp}$ such that $\Delta(\mu,\nu)<\varepsilon$. We choose $\delta>0$
such that $\Delta(\mu,\nu)\leq\varepsilon-2\delta$. We find $f,g:I\rightarrow
X$, such that $\mu=f_{\sharp}\lambda$, $\nu=g_{\sharp}\lambda$, and such that
$D(f,g)\leq\Delta(\mu,\nu)+\delta\leq\varepsilon-\delta$. Applying Lemma
\ref{l:simple}, we find $\widetilde{f}$, $\widetilde{g}$, as in the Lemma, and
then $D(\widetilde{f},\widetilde{g})\leq\varepsilon-\delta$. For any $x\in
$supp$(\mu)$, we can find $t\in I\,\ $such that $d(x,\widetilde{f}(t))<\delta
$, and since by definition $d(\widetilde{f}(t),\widetilde{g}(t))\leq
D(\widetilde{f},\widetilde{g})\leq\varepsilon-\delta$, we get
that\ $d(x,\widetilde{g}(t))<\varepsilon$. By construction of $\widetilde{g}$,
$\widetilde{g}(t)\in$supp$(\nu)\subseteq A $, therefore $x\in U$.

$\Leftarrow:$ It now follows from uniqueness of $\varepsilon$-neighborhood.
\end{proof}

Lemma \ref{l:help} is not true for weak* topology, or uniform topology.

\begin{corollary}
A set $A\subseteq X$ is open (respectively closed), if and only if $A_{\sharp
}$ is open (respectively closed).
\end{corollary}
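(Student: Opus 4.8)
The plan is to reduce both equivalences to three ingredients: the isometric embedding $x\mapsto\delta_x$, Lemma \ref{l:help}, and Proposition \ref{p:haus}(ii). First I would record that $x\mapsto\delta_x$ is an isometry of $X$ into $(P(X),\Delta)$. Taking the genuinely constant functions $f\equiv x$ and $g\equiv y$ on $I$ gives $\Delta(\delta_x,\delta_y)\le D(f,g)=d(x,y)$, while Proposition \ref{p:haus}(i) applied to $\operatorname{supp}(\delta_x)=\{x\}$ and $\operatorname{supp}(\delta_y)=\{y\}$ gives the reverse inequality, so $\Delta(\delta_x,\delta_y)=d(x,y)$; in particular this map is d-continuous. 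Since $\delta_x\in A_\sharp$ exactly when $\{x\}\subseteq A$, we have $A=\{x\in X:\delta_x\in A_\sharp\}$, i.e. $A$ is the preimage of $A_\sharp$ under a continuous map. Hence if $A_\sharp$ is open (respectively closed), then so is $A$, which disposes of the ``if'' direction of both equivalences at once.

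For the ``only if'' direction in the open case I would use Lemma \ref{l:help}. Fix $\mu\in A_\sharp$ and set $K=\operatorname{supp}(\mu)$, a closed (hence compact) subset of the open set $A$; choose $\varepsilon>0$ so that the $\varepsilon$-neighborhood $U$ of $K$ satisfies $U\subseteq A$. By Lemma \ref{l:help}, $U_\sharp$ is the $\varepsilon$-neighborhood of $K_\sharp$ in $P(X)$. Since $\mu\in K_\sharp$, the open d-ball of radius $\varepsilon$ about $\mu$ is contained in that $\varepsilon$-neighborhood, hence in $U_\sharp$; and $U\subseteq A$ gives $U_\sharp\subseteq A_\sharp$ by Lemma \ref{l:elementary}(ii). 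Thus every point of $A_\sharp$ is interior, so $A_\sharp$ is open.

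For the ``only if'' direction in the closed case I would argue by sequential limits using Proposition \ref{p:haus}(ii). If $\mu_n\in A_\sharp$ d-converges to $\mu$, then $\operatorname{supp}(\mu_n)\to\operatorname{supp}(\mu)$ in the Hausdorff metric; since each $\operatorname{supp}(\mu_n)\subseteq A$ and $A$ is closed, every point of the Hausdorff limit lies in $A$, so $\operatorname{supp}(\mu)\subseteq A$ and $\mu\in A_\sharp$. Because $\tau_d$ is metrizable (Proposition \ref{p:topology}), checking closedness on sequences suffices, so $A_\sharp$ is closed.

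The routine parts are the two ``if'' directions and the Hausdorff-limit argument. The step that needs the most care is the open ``only if'' direction: everything there hinges on Lemma \ref{l:help} converting the $\varepsilon$-neighborhood of the compact set $K=\operatorname{supp}(\mu)$ into the $\varepsilon$-neighborhood of $K_\sharp$, and on the observation that $\mu\in K_\sharp$, so that the d-ball about $\mu$ sits inside that neighborhood. I expect this to be the only genuine obstacle; the remaining bookkeeping is immediate from Lemma \ref{l:elementary}.
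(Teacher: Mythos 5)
Your proof is correct, and it leans on the same toolkit as the paper (Lemma \ref{l:help}, the embedding $x\mapsto\delta_x$, and earlier convergence results), but the bookkeeping is arranged differently and in one place you are actually more careful than the source. The paper dispatches the open case with the single sentence ``Lemma \ref{l:help} implies that $A$ is open if and only if $A_\sharp$ is open,'' which is slightly glib since that lemma is stated for \emph{closed} sets; your application of it to the compact set $K=\operatorname{supp}(\mu)$ sitting inside the open set $A$ is exactly the missing elaboration, and your explicit verification that $x\mapsto\delta_x$ is a $\Delta$-isometry lets you handle both ``if'' directions at once as preimages under a continuous map, where the paper only runs the $\delta_{x_n}\to\delta_x$ argument for the closed case and leaves the open ``if'' direction implicit in its citation of Lemma \ref{l:help}. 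For closedness of $A_\sharp$ you invoke Proposition \ref{p:haus}(ii) (Hausdorff convergence of supports) where the paper uses Proposition \ref{p:basic}(ii) (the portmanteau inequality $\limsup\mu_n(A)\le\mu(A)$ for closed $A$); both are one-line deductions from results already established, so nothing is gained or lost there, though your route has the small aesthetic advantage of staying entirely inside the ``supports'' language in which $A_\sharp$ is defined. In short: same essential strategy, a cleaner decomposition, and a more honest treatment of the open case.
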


\begin{proof}
Lemma \ref{l:help} implies that $A$ is open if and only if $A_{\sharp}$ is open.

Assume that $A$ is closed, and choose any convergent sequence of measures
$\mu_{n}\in A_{\sharp}$, converging to $\mu$. Proposition \ref{p:basic}, (ii)
now implies that supp$(\mu)\subseteq A$, hence $\mu\in A_{\sharp}$ and
$A_{\sharp}$ is closed. Now, assume that $A_{\sharp}$ is closed, and let
$x_{n}$ be any convergent sequence in $A$, converging to $x$. Now by
definition of d-convergence, $\delta_{x_{n}}$ converges to $\delta_{x}$, and
since $A_{\sharp}$ is closed, $\delta_{x}\in A_{\sharp}$. By definition $x\in
A$, so $A$ is closed.
\end{proof}

\begin{corollary}
Let $f:X\rightarrow X$ be continuous. Then a set $A\subseteq X$ is closed and
$f$-invariant if and only if $A_{\sharp}$ is closed and $f_{\sharp}$-invariant.
\end{corollary}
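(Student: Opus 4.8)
The plan is to split the biconditional into its two independent halves---the closedness condition and the invariance condition---verify each separately, and then recombine them. I read ``$f$-invariant'' as $f(A)=A$; for the forward-invariant reading $f(A)\subseteq A$ the argument is identical, with Lemma \ref{l:elementary}(ii) used in place of Lemma \ref{l:elementary}(i).

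The closedness half requires nothing new: the Corollary immediately preceding this one already states that $A$ is closed if and only if $A_{\sharp}$ is closed. For the invariance half I would invoke Lemma \ref{l:seemstrivial}, which records $f_{\sharp}(A_{\sharp})=f(A)_{\sharp}$ for continuous $f$. Combining this with the injectivity of the lift (Lemma \ref{l:elementary}(i)) yields the chain
\[
f_{\sharp}(A_{\sharp})=A_{\sharp}\ \Longleftrightarrow\ f(A)_{\sharp}=A_{\sharp}\ \Longleftrightarrow\ f(A)=A,
\]
so that $A$ is $f$-invariant exactly when $A_{\sharp}$ is $f_{\sharp}$-invariant, and this equivalence needs no assumption on $A$ beyond measurability.

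Since both the closedness equivalence and the invariance equivalence hold unconditionally, their conjunction gives the stated result: $A$ is closed and $f$-invariant if and only if $A_{\sharp}$ is closed and $f_{\sharp}$-invariant. I do not anticipate any real obstacle here. All of the substantive work---translating statements about supports and pushforward measures back into statements about images under $f$, using continuity of $f$ and compactness of $X$---is already discharged in Lemma \ref{l:seemstrivial}; the present corollary is a short formal step assembling that lemma with the elementary functorial properties of the lift and the closedness correspondence.
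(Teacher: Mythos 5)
Your proof is correct and follows exactly the route the paper intends (the paper states this corollary without proof, as an immediate consequence of the preceding closedness corollary together with Lemma \ref{l:seemstrivial} and Lemma \ref{l:elementary}(i)). Your explicit handling of both readings of ``invariant'' is a reasonable extra precaution but does not change the argument.
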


\begin{definition}
\textbf{Lyapunov stability. }Given a continuous function $f$ on $X$, we say
that a closed invariant set $A$ of the dynamical system $f$ is Lyapunov
stable, if for each $\varepsilon>0$, there exists $\delta>0$, such that if
$U$, $V$ are $\varepsilon$, $\delta$ neighborhoods of $A$ respectively, then
for all $n\geq0$, $n\in%
\mathbb{N}
$, $f^{n}(V)\subseteq U$.
\end{definition}

\begin{proposition}
Suppose $f$ is a continuous function on $X$. A closed invariant set $A$ is
Lyapunov stable with respect to $f$, if and only if $A_{\sharp}$ is Lyapunov
stable with respect to $f_{\sharp}$.
\end{proposition}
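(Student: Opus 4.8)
The plan is to exploit the correspondence between $\varepsilon$-neighborhoods established in Lemma~\ref{l:help}, which translates the geometric condition defining Lyapunov stability on $X$ directly into the analogous condition on $P(X)$. The key observation is that Lyapunov stability is stated purely in terms of neighborhoods and images under iterates, and Lemma~\ref{l:help} gives an exact dictionary: if $U$ and $V$ are the $\varepsilon$- and $\delta$-neighborhoods of the closed set $A$, then $U_{\sharp}$ and $V_{\sharp}$ are precisely the $\varepsilon$- and $\delta$-neighborhoods of $A_{\sharp}$ in the dynamical topology. So I would work to show that the inclusion $f^{n}(V)\subseteq U$ is equivalent to the inclusion $f_{\sharp}^{n}(V_{\sharp})\subseteq U_{\sharp}$, and then both stability statements reduce to the same quantifier condition over $\varepsilon$ and $\delta$.

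First I would fix $\varepsilon>0$ and let $U$ be the $\varepsilon$-neighborhood of $A$ in $X$; by Lemma~\ref{l:help}, $U_{\sharp}$ is the $\varepsilon$-neighborhood of $A_{\sharp}$. For the forward direction, assuming $A$ is Lyapunov stable, choose the corresponding $\delta>0$ with $f^{n}(V)\subseteq U$ for all $n$, where $V$ is the $\delta$-neighborhood of $A$; again $V_{\sharp}$ is the $\delta$-neighborhood of $A_{\sharp}$. The crucial step is then to push the set-level inclusion through the lift. Using Lemma~\ref{l:seemstrivial}, $f_{\sharp}(W_{\sharp})=f(W)_{\sharp}$ for any measurable $W$, and iterating gives $f_{\sharp}^{n}(V_{\sharp})=f^{n}(V)_{\sharp}$. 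By Lemma~\ref{l:elementary}(ii), the inclusion $f^{n}(V)\subseteq U$ lifts to $f^{n}(V)_{\sharp}\subseteq U_{\sharp}$, i.e. $f_{\sharp}^{n}(V_{\sharp})\subseteq U_{\sharp}$, which is exactly the neighborhood containment required for Lyapunov stability of $A_{\sharp}$. The reverse direction is symmetric: given stability of $A_{\sharp}$ and an arbitrary $\varepsilon>0$, one obtains a $\delta$ with $f_{\sharp}^{n}(V_{\sharp})\subseteq U_{\sharp}$; applying $f_{\sharp}^{n}(V_{\sharp})=f^{n}(V)_{\sharp}$ and the equivalence $A_{\sharp}\subseteq B_{\sharp}\iff A\subseteq B$ from Lemma~\ref{l:elementary}(i)--(ii), this descends to $f^{n}(V)\subseteq U$.

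The main subtlety I expect to watch for is ensuring that the neighborhood structures match up cleanly under all the relevant quantifiers, and in particular that $f^{n}(V)$ remains within the scope where Lemma~\ref{l:seemstrivial} applies. Lemma~\ref{l:seemstrivial} is stated for the image $f(A)$ of a set, and iterating it requires that $f^{k}(V)$ be handled correctly at each stage; since $f$ is continuous and $X$ compact, images of closed sets are closed, but $V$ is open, so I would either work with the closure or check that the argument of Lemma~\ref{l:seemstrivial} does not actually require closedness of the set being lifted. A careful reading shows the lift identity $f_{\sharp}(W_{\sharp})=f(W)_{\sharp}$ only uses continuity of $f$ and compactness of $X$ through the support identity $f(\mathrm{supp}(\nu))=\mathrm{supp}(f_{\sharp}\nu)$, so it holds for arbitrary measurable $W$, and the iteration is legitimate. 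The only genuinely delicate point is bookkeeping the equivalences so that the $\delta$ produced for one system serves verbatim as the $\delta$ for the other, which the exact neighborhood dictionary of Lemma~\ref{l:help} guarantees.
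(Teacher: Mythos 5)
Your proposal is correct and follows essentially the same route as the paper: both rest on the neighborhood dictionary of Lemma~\ref{l:help}, the identity $f_{\sharp}^{n}(V_{\sharp})=f^{n}(V)_{\sharp}$ from Lemma~\ref{l:seemstrivial}, and the equivalence $f^{n}(V)\subseteq U \iff f^{n}(V)_{\sharp}\subseteq U_{\sharp}$ from Lemma~\ref{l:elementary}. Your extra remark that Lemma~\ref{l:seemstrivial} does not require closedness of the set being lifted is a sensible check that the paper leaves implicit.
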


\begin{proof}
Suppose $A$ is Lyapunov stable, i.e. for a given $\varepsilon>0$,
$f^{n}(V)\subseteq U$ for some $\delta>0$ and all $n\geq0$, with $U$, $V$
respectively $\varepsilon$, $\delta$ neighborhoods of $A$. By definition,
$f^{n}(V)\subseteq U$ is equivalent to $f^{n}(V)_{\sharp}\subseteq U_{\sharp}%
$, and because of Lemma \ref{l:seemstrivial} and $f^{n}(V)_{\sharp}=f_{\sharp
}^{n}(V_{\sharp})$, this is equivalent to\ $f_{\sharp}^{n}(V_{\sharp
})\subseteq U_{\sharp}$. Lemma \ref{l:help} states that $U$, $V$ are
respectively $\varepsilon$, $\delta$ neighborhoods of $A$ if and only if
$U_{\sharp}$, $V_{\sharp}$ are respectively $\varepsilon$, $\delta$
neighborhoods of $A_{\sharp}$, which completes the proof.
\end{proof}

\begin{definition}
\textbf{Asymptotic stability. }Given a continuous function $f$ on $X$, we say
that a closed invariant set $A$ of a dynamical system $f$ is asymptotically
stable, if there exists $\varepsilon>0$, such that for each $x\in U$, where
$U$ is the $\varepsilon$-neighborhood of $A$, $\lim_{n\rightarrow\infty
}d(f^{n}(x),A)=0.$
\end{definition}

\begin{proposition}
Suppose $f$ is a continuous function on $X$. A closed invariant set $A$ is
asymptotically stable with respect to $f$, if and only if $A_{\sharp}$ is
asymptotically stable with respect to $f_{\sharp}$.
\end{proposition}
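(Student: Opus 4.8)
The plan is to prove both implications by transferring between a point $x\in X$ and its Dirac measure $\delta_{x}$, the key tool being the distance-to-the-lift identity
\[
\inf_{\nu\in A_{\sharp}}\Delta(\delta_{y},\nu)=d(y,A)\quad\text{for every }y\in X.
\]
The upper bound comes from $\Delta(\delta_{y},\delta_{z})=d(y,z)$ applied with $\nu=\delta_{z}$, $z\in A$, and the lower bound from Proposition \ref{p:haus}(i), since $\mathrm{supp}(\nu)\subseteq A$ forces $d_{H}(\{y\},\mathrm{supp}(\nu))\geq d(y,A)$. Throughout I would use Lemma \ref{l:help} in the forms $x\in U\Leftrightarrow\delta_{x}\in U_{\sharp}$ and $\mathrm{supp}(\mu)\subseteq U\Leftrightarrow\mu\in U_{\sharp}$, where $U_{\sharp}$ is the $\varepsilon$-neighborhood $\mathcal{U}$ of $A_{\sharp}$ whenever $U$ is the $\varepsilon$-neighborhood of $A$, together with $f_{\sharp}^{n}\delta_{x}=\delta_{f^{n}(x)}$.

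For the implication ``$A_{\sharp}$ asymptotically stable $\Rightarrow A$ asymptotically stable'' I would keep the same $\varepsilon$. Given $x\in U$, Lemma \ref{l:help} gives $\delta_{x}\in\mathcal{U}$, so by hypothesis $\inf_{\nu\in A_{\sharp}}\Delta(f_{\sharp}^{n}\delta_{x},\nu)\to0$. Since $f_{\sharp}^{n}\delta_{x}=\delta_{f^{n}(x)}$, the displayed identity rewrites this as $d(f^{n}(x),A)\to0$, which is exactly asymptotic stability of $A$.

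For the converse I would again keep the same $\varepsilon$ and take any $\mu\in\mathcal{U}=U_{\sharp}$, so that $K:=\mathrm{supp}(\mu)$ is a compact subset of $U$. Choosing $\phi:I\to X$ with $\phi_{\sharp}\lambda=\mu$ and, by Lemma \ref{l:simple}, $\phi(I)\subseteq K$, I get $(f^{n}\circ\phi)_{\sharp}\lambda=f_{\sharp}^{n}\mu$. Picking for each $n$ a measurable selection $\psi_{n}:I\to A$ with $d(f^{n}(\phi(a)),\psi_{n}(a))\leq d(f^{n}(\phi(a)),A)+\tfrac1n$ and setting $\nu_{n}=\psi_{n\sharp}\lambda\in A_{\sharp}$ yields
\[
\Delta(f_{\sharp}^{n}\mu,A_{\sharp})\leq D(f^{n}\circ\phi,\psi_{n})=\sup_{a\in I}d(f^{n}(\phi(a)),\psi_{n}(a))\leq\sup_{x\in K}d(f^{n}(x),A)+\tfrac1n.
\]
Hence it suffices to prove the attraction is \emph{uniform} on the compact set $K$, i.e. $\sup_{x\in K}d(f^{n}(x),A)\to0$.

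This uniform attraction is the main obstacle: pointwise convergence $d(f^{n}(x),A)\to0$ for each $x\in K$ does not by itself supply a uniform rate (a single globally attracting but non-Lyapunov-stable fixed point on a circle already fails the uniform estimate, and there $f^{n}(K)$ stays spread out, so $d_{H}(\mathrm{supp}(f_{\sharp}^{n}\mu),A)\not\to0$). I would resolve it using Lyapunov stability of $A$, which accompanies asymptotic stability in the standard sense and whose lift-equivalence was just established: given $\eta>0$, Lyapunov stability yields $\delta>0$ with $f^{m}(V_{\delta})\subseteq W_{\eta}$ for all $m\geq0$ (here $V_{\delta},W_{\eta}$ are the $\delta$- and $\eta$-neighborhoods of $A$); attraction gives for each $x\in K$ an index $N_{x}$ with $f^{N_{x}}(x)\in V_{\delta}$, and continuity a neighborhood $O_{x}\ni x$ with $f^{N_{x}}(O_{x})\subseteq V_{\delta}$, whence $f^{m}(O_{x})\subseteq W_{\eta}$ for all $m\geq N_{x}$; a finite subcover of $K$ and $N=\max_{i}N_{x_{i}}$ then give $\sup_{x\in K}d(f^{n}(x),A)<\eta$ for $n\geq N$. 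Substituting this into the displayed inequality forces $\Delta(f_{\sharp}^{n}\mu,A_{\sharp})\to0$, which establishes asymptotic stability of $A_{\sharp}$.
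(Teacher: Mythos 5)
Your proof is correct, and in outline it matches the paper's: the direction ``$A_{\sharp}$ stable $\Rightarrow$ $A$ stable'' is handled exactly as in the paper, via $\delta_{x}$, the identity $f_{\sharp}^{n}\delta_{x}=\delta_{f^{n}(x)}$, and $\Delta(\delta_{y},A_{\sharp})=d(y,A)$ (which the paper records inside Lemma \ref{l:haushelp}). The substantive difference is in the converse. The paper disposes of the uniformity issue in one clause --- ``because of compactness of $X$, there is $n_{0}$ such that for all $n\geq n_{0}$, $f^{n}(U)\subseteq V$'' --- i.e.\ it asserts that pointwise attraction on a compact neighborhood is automatically uniform. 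You are right to isolate this as the crux and right that it does not follow from pointwise attraction alone: for a circle map with a single fixed point $p$ attracting every orbit but not Lyapunov stable, $f^{n}(U)$ contains the antipode of $p$ for all large $n$. Your repair (attraction into $V_{\delta}$, Lyapunov stability to remain in $W_{\eta}$, continuity plus a finite subcover of $K$) is the standard and correct argument, and it is genuinely needed; in this respect your proof fills a real gap in the paper's.

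Two caveats. First, the paper's stated definition of asymptotic stability is pure attraction and does not include Lyapunov stability, so your argument silently strengthens the hypothesis to the standard definition. Under the literal definition the forward implication is in fact false: in the circle example above, $A=\{p\}$ is ``asymptotically stable'' while normalized Lebesgue measure $\mu$ near $p$ has $\Delta(f_{\sharp}^{n}\mu,A_{\sharp})\geq d(\mathrm{supp}(f_{\sharp}^{n}\mu),A)$ bounded away from $0$ by Proposition \ref{p:haus}(i). So your reading is the only one under which the proposition holds, but it should be stated explicitly. Second, the existence of the measurable selections $\psi_{n}$ deserves a word (a measurable selection theorem, or a partition of $A$ into small Borel pieces); you can avoid selections entirely by quoting the inequality $\Delta(\mu,V_{\sharp})\leq\sup_{x\in\mathrm{supp}(\mu)}d(x,V)$ from the proof of Lemma \ref{l:haushelp}, or by noting that $\sup_{x\in K}d(f^{n}(x),A)<\eta$ gives $f^{n}(\mathrm{supp}(\mu))\subseteq W_{\eta}$ and then invoking Lemma \ref{l:help} as the paper does.
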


\begin{proof}
$\Longrightarrow:$ Suppose $A$ is asymptotically stable. Let $\varepsilon>0 $,
$U$,\ be as in the definition of the asymptotic stability, and choose any
$\delta>0$. Now, because of compactness of $X$, there is $n_{0}$ such that for
all $n\geq n_{0}$, $f^{n}(U)\subseteq V$, where $V$ is the $\delta
$-neighborhood of $A$. This, and Lemma \ref{l:help}, imply that for $n\geq
n_{0}$, and any measure $\mu\in$ $U_{\sharp}$, $f_{\sharp}^{n}(\mu)$ is in the
$\delta$-neighborhood of $A_{\sharp}$. Since $\delta$ was arbitrary,
$A_{\sharp}$ is indeed asymptotically stable$.$

$\Leftarrow:$ Assume that $A_{\sharp}$ is asymptotically stable, and choose
$\varepsilon>0$, $U_{\sharp}$, as in the definition of asymptotic stability,
where because of Lemma \ref{l:help}, $U$ is the $\varepsilon$-neighborhood of
$A$. By definition, if $x\in U$, then $\delta_{x}\in U_{\sharp}$, and then
$\Delta(f_{\sharp}^{n}(\delta_{x}),A_{\sharp})\rightarrow0\,$\ as
$n\rightarrow\infty$. That, and the relation $f_{\sharp}^{n}(\delta
_{x})=\delta_{f^{n}(x)}$ now imply that $d(f^{n}(x),A)\rightarrow0$ as
$n\rightarrow\infty$, which proves that $A$ is asymptotically stable.
\end{proof}

Somewhat stronger property than asymptotic stability is that of an attractor.

\begin{definition}
\textbf{Attractor. }Given a continuous function $f$ on $X$, a closed invariant
set $A$ of the dynamical system $f$ is an attractor, if there exist
$\varepsilon>0$, and $N>0$, such that if $U$ is $\varepsilon$-neighborhood of
$A$, then $f^{N}(U)\subseteq U$, and $A=\cap_{n=1}^{\infty}f^{n}(U)$.
\end{definition}

(There are various definitions of attractor in the literature;\ we have chosen
the definition from \cite{Katok:95}.)

\begin{proposition}
Suppose $f$ is a continuous function on $X$. A closed invariant set
$A\subseteq X$ is an attractor with respect to $f$, if and only if $A_{\sharp
}$ is an attractor with respect to $f_{\sharp}$.
\end{proposition}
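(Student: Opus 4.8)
The plan is to mimic the proofs of the preceding propositions on Lyapunov and asymptotic stability: fix the \emph{same} constants $\varepsilon>0$ and $N>0$ on both sides and translate, one by one, the two defining clauses of an attractor through the lift. Since every ingredient translates by an \emph{equivalence} rather than a one-sided implication, a single chain of bidirectional steps yields both directions of the statement simultaneously, so I would not split the proof into $\Rightarrow$ and $\Leftarrow$.

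First I would set up the dictionary between the two settings. If $U$ is the $\varepsilon$-neighborhood of the closed set $A$, then Lemma \ref{l:help} says precisely that $U_{\sharp}$ is the $\varepsilon$-neighborhood of $A_{\sharp}$, and conversely; this is the only place where openness of $U$ and closedness of $A$ are used. Next, iterating Lemma \ref{l:seemstrivial} gives $f_{\sharp}^{\,n}(U_{\sharp})=(f^{n}(U))_{\sharp}$ for every $n\geq 1$: by induction $f_{\sharp}^{\,n}(U_{\sharp})=f_{\sharp}(f_{\sharp}^{\,n-1}(U_{\sharp}))=f_{\sharp}((f^{n-1}(U))_{\sharp})=(f(f^{n-1}(U)))_{\sharp}=(f^{n}(U))_{\sharp}$, and this requires no regularity of the (possibly non-open) images $f^{n}(U)$.

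With this dictionary the two clauses translate directly. The invariance clause $f^{N}(U)\subseteq U$ is equivalent, by Lemma \ref{l:elementary}(ii), to $(f^{N}(U))_{\sharp}\subseteq U_{\sharp}$, that is, to $f_{\sharp}^{\,N}(U_{\sharp})\subseteq U_{\sharp}$. For the attraction clause, Lemma \ref{l:elementary}(i) reduces the equality $A=\bigcap_{n=1}^{\infty}f^{n}(U)$ to $A_{\sharp}=(\bigcap_{n=1}^{\infty}f^{n}(U))_{\sharp}$, and then Lemma \ref{l:elementary}(iv) together with the dictionary gives $(\bigcap_{n=1}^{\infty}f^{n}(U))_{\sharp}=\bigcap_{n=1}^{\infty}(f^{n}(U))_{\sharp}=\bigcap_{n=1}^{\infty}f_{\sharp}^{\,n}(U_{\sharp})$. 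Hence $A=\bigcap_{n}f^{n}(U)$ if and only if $A_{\sharp}=\bigcap_{n}f_{\sharp}^{\,n}(U_{\sharp})$. Combining the three equivalences shows that $(\varepsilon,N)$ witnesses that $A$ is an attractor for $f$ exactly when the same $(\varepsilon,N)$ witnesses that $A_{\sharp}$ is an attractor for $f_{\sharp}$, with the required neighborhood being $U_{\sharp}$ by Lemma \ref{l:help}.

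The main point to watch, rather than a genuine obstacle, is keeping careful track of which objects must be open or closed. Lemma \ref{l:help} is invoked only for the open neighborhood $U$ and the closed set $A$, whereas the forward images $f^{n}(U)$ are handled purely through Lemmas \ref{l:seemstrivial} and \ref{l:elementary}, which hold for arbitrary measurable sets and so tolerate $f^{n}(U)$ failing to be open. The only mild subtlety is that the \emph{countable} intersection in the attraction clause forces the use of Lemma \ref{l:elementary}(iv) rather than the finite version (iii); once that is in hand, the argument is a pure translation and introduces no new estimates.
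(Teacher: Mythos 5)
Your proposal is correct and follows essentially the same route as the paper's proof: both translate the two defining clauses of an attractor through the lift using Lemma \ref{l:help} for the $\varepsilon$-neighborhood, Lemma \ref{l:seemstrivial} together with Lemma \ref{l:elementary}(ii) for the clause $f^{N}(U)\subseteq U$, and Lemma \ref{l:elementary}(i),(iv) for the countable intersection. Your added remarks (the explicit induction giving $f_{\sharp}^{n}(U_{\sharp})=(f^{n}(U))_{\sharp}$ and the note on which sets must be open or closed) only make explicit what the paper leaves implicit.
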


\begin{proof}
The Proposition follows from the following equivalences: (I) Lemma
\ref{l:help} implies that $U_{\sharp}$ is the $\varepsilon$-neighborhood of
$A_{\sharp}$, if and only if $U$ is the $\varepsilon$-neighborhood of $A$;
(II)\ Lemma \ref{l:elementary}, (ii)\ and Lemma \ref{l:seemstrivial} imply
that $f^{N}(U)\subseteq U$ if and only if $f_{\sharp}^{N}(U_{\sharp})\subseteq
U_{\sharp}$; and (III) Lemma \ref{l:elementary}, (iv)\ and Lemma
\ref{l:seemstrivial} imply that $\cap_{k=1}^{\infty}f_{\sharp}^{n}(U_{\sharp
})=(\cap_{n=1}^{\infty}f^{n}(U))_{\sharp}$. That, and Lemma \ref{l:elementary}%
, (i), now imply that $A=\cap_{n=1}^{\infty}f^{n}(U)$ if and only if
$A_{\sharp}=\cap_{n=1}^{\infty}f_{\sharp}^{n}(U_{\sharp})$.
\end{proof}

An analogous claim holds for repellers, sinks and for sources.

\begin{definition}
\textbf{Exponential stability. }Given a continuous function $f$ on $X$, a
closed invariant set $A$ of the dynamical system $f$ is exponentially stable,
if there exist constants $C$, $\lambda>0$, and $\varepsilon>0$, such that for
each $0<\delta<\varepsilon$, if $U$ is a closed $\delta$ neighborhood of $A$,
then, $d_{H}(A,f^{n}(U))\leq Ce^{-\lambda n}d_{H}(A,U)$.
\end{definition}

We now introduce notation related to the Hausdorff metric induced by $\Delta$
on closed subsets of $P(X).$ Given $\mu\in P(X)$, we set $\Delta
(\mu,\mathcal{A})=\inf\{\Delta(\mu,\nu),\nu\in\mathcal{A}\}$, $\Delta
(\mathcal{A},\mathcal{B})=\sup\{\Delta(\mu,\mathcal{B})\,|\,\mu\in
\mathcal{A}\}$, and then $\Delta_{H}(\mathcal{A},\mathcal{B})=\max
\{\Delta(\mathcal{A},\mathcal{B}),\Delta(\mathcal{B},\mathcal{A})\}$ is the
Hausdorff metric.

\begin{lemma}
\label{l:haushelp}Let $U,V\subseteq X$ be closed sets. Then $d_{H}%
(U,V)=\Delta_{H}(U_{\sharp},V_{\sharp})$.
\end{lemma}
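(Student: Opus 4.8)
The plan is to reduce everything to the one-sided identity $\Delta(U_{\sharp},V_{\sharp})=d(U,V)$; the full statement then follows by symmetry, since both $\Delta_{H}$ and $d_{H}$ are defined as the maximum of the two one-sided distances, and the roles of $U$ and $V$ are interchangeable.

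For the bound $\Delta(U_{\sharp},V_{\sharp})\leq d(U,V)$, write $r=d(U,V)$, so that every $x\in U$ satisfies $d(x,V)\leq r$. Fix any $\mu\in U_{\sharp}$ and any $\varepsilon>0$. Following the finite-partition device used in Proposition \ref{p:dense}, I would cover $X$ by pairwise disjoint measurable sets $C_{1},\dots,C_{k}$ of diameter at most $\varepsilon$, choose in each nonempty $C_{i}\cap\mathrm{supp}(\mu)$ a point $x_{i}$, and then pick $y_{i}\in V$ with $d(x_{i},y_{i})\leq r$ (possible since $x_{i}\in U$ gives $d(x_{i},V)\leq r$, and $V$ is closed in the compact space $X$, so the distance is attained). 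Taking $f\colon I\to X$ with $\mu=f_{\sharp}\lambda$ and $f(I)\subseteq\mathrm{supp}(\mu)$ (via Lemma \ref{l:simple}), and defining $g(t)=y_{i}$ whenever $f(t)\in C_{i}$, gives $g_{\sharp}\lambda=\sum_{i}\mu(C_{i})\delta_{y_{i}}=:\nu\in V_{\sharp}$; the triangle inequality on each $C_{i}$ yields $D(f,g)\leq r+\varepsilon$, so $\Delta(\mu,\nu)\leq r+\varepsilon$. Taking the supremum over $\mu\in U_{\sharp}$ and letting $\varepsilon\to 0$ gives $\Delta(U_{\sharp},V_{\sharp})\leq r$.

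For the reverse inequality $\Delta(U_{\sharp},V_{\sharp})\geq d(U,V)$, I would exhibit a single test measure. Since $U$ is compact and $x\mapsto d(x,V)$ is continuous, choose $x^{\ast}\in U$ with $d(x^{\ast},V)=r$ and set $\mu=\delta_{x^{\ast}}\in U_{\sharp}$. For any $\nu\in V_{\sharp}$ we have $\mathrm{supp}(\nu)\subseteq V$, so Proposition \ref{p:haus}(i) gives $\Delta(\delta_{x^{\ast}},\nu)\geq d_{H}(\{x^{\ast}\},\mathrm{supp}(\nu))\geq d(x^{\ast},\mathrm{supp}(\nu))\geq d(x^{\ast},V)=r$. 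Taking the infimum over $\nu\in V_{\sharp}$ shows $\Delta(\delta_{x^{\ast}},V_{\sharp})\geq r$, hence $\Delta(U_{\sharp},V_{\sharp})\geq r$.

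Combining the two bounds gives $\Delta(U_{\sharp},V_{\sharp})=d(U,V)$, and interchanging $U$ and $V$ gives $\Delta(V_{\sharp},U_{\sharp})=d(V,U)$; taking the maximum of these two identities yields $\Delta_{H}(U_{\sharp},V_{\sharp})=d_{H}(U,V)$. I expect the upper bound to be the only real obstacle: the point is to produce, for an arbitrary $\mu\in U_{\sharp}$, an approximating measure supported in $V$ while keeping the sup-distance $D$ under control. The finite-partition construction handles this cleanly and avoids invoking a measurable-selection theorem for the nearest-point map into $V$, which would otherwise be needed to define the pushforward directly.
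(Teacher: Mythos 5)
Your proof is correct and follows essentially the same route as the paper: establish the one-sided identity $\Delta(U_{\sharp},V_{\sharp})=d(U,V)$, with the upper bound obtained by transporting an arbitrary $\mu\in U_{\sharp}$ onto a nearby measure supported in $V$, and the lower bound obtained by testing against Dirac measures $\delta_{x}$ for $x\in U$, then symmetrize. If anything, your finite-partition construction for the upper bound is more careful than the paper's one-line appeal to Lemma \ref{l:simple} (whose quoted inequality $\Delta(\mu,\nu)\leq\sup_{x\in\mathrm{supp}(\mu)}d(x,\mathrm{supp}(\nu))$ is false for a general pair $\mu,\nu$ with equal supports but different weights; what is needed, and what your construction actually delivers, is the corresponding bound on $\Delta(\mu,V_{\sharp})$, the infimum over $\nu\in V_{\sharp}$).
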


\begin{proof}
Lemma \ref{l:simple} implies that $\Delta(\mu,\nu)\leq\sup_{x\in
\text{supp}(\mu)}d(x,$supp$(\nu))$, hence $\Delta(\mu,V_{\sharp})\leq
\sup_{x\in\text{supp}(\mu)}d(x,V)$.\ We now deduce that for $\mu\in U_{\sharp
}$, $\Delta(\mu,V_{\sharp})\leq\sup_{x\in U}d(x,V)=d(U,V)$, and by taking
supremum over $\mu\in U_{\sharp}$, we get $\Delta(U_{\sharp},V_{\sharp})\leq
d(U,V)$.

The definition of $\Delta$ and Lemma \ref{l:simple} easily imply that
$\Delta(\delta_{x},V_{\sharp})=d(x,V)$, hence $\Delta(U_{\sharp},V_{\sharp
})\geq\sup_{x\in U}\Delta(\delta_{x},V_{\sharp})=\sup_{x\in U}\Delta
(x,V)=d(U,V)$. We conclude that $\Delta(U_{\sharp},V_{\sharp})=d(U,V)$, and
similarly $\Delta(V_{\sharp},U_{\sharp})=d(V,U)$, which completes the proof.
\end{proof}

\begin{proposition}
Suppose $f$ is a continuous function on $X$. A closed invariant set $A$ is
exponentially stable with respect to $f$, if and only if $A_{\sharp}$ is
exponentially stable with respect to $f_{\sharp}$.
\end{proposition}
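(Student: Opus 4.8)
The plan is to reduce the proposition to a transparent dictionary between objects in $X$ and their lifts in $P(X)$, using the translation results already established: Lemma \ref{l:seemstrivial}, Lemma \ref{l:haushelp}, and the closed-neighborhood correspondence underlying Lemma \ref{l:help}. The definition of exponential stability refers only to closed $\delta$-neighborhoods, to iterates $f^{n}(U)$, and to the Hausdorff distances $d_{H}(A,U)$ and $d_{H}(A,f^{n}(U))$; so I would show that each of these transforms cleanly under the lift $\sharp$, with the upshot that the defining inequality $d_{H}(A,f^{n}(U))\leq Ce^{-\lambda n}d_{H}(A,U)$ holds for $f$ on $X$ if and only if the corresponding inequality holds for $f_{\sharp}$ on $P(X)$, with the \emph{same} constants $C,\lambda,\varepsilon$. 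Because the translation is reversible, both implications are obtained simultaneously.

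First I would record the closed-neighborhood dictionary. Using Lemma \ref{l:simple} for the upper bound (as in the proof of Lemma \ref{l:haushelp}) and Proposition \ref{p:haus}(i) for the lower bound, one gets $\Delta(\mu,A_{\sharp})=\sup_{x\in\text{supp}(\mu)}d(x,A)$ for every $\mu\in P(X)$. Consequently, if $U=\{x:d(x,A)\leq\delta\}$ is the closed $\delta$-neighborhood of $A$, then $\mu\in U_{\sharp}$ iff $\text{supp}(\mu)\subseteq U$ iff $\Delta(\mu,A_{\sharp})\leq\delta$; that is, $U_{\sharp}$ is exactly the closed $\delta$-neighborhood of $A_{\sharp}$ in $(P(X),\Delta)$. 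This is the closed analogue of Lemma \ref{l:help}.

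Next I would handle the iterates. Since $X$ is compact and $f$ continuous, $f^{n}(U)$ is compact, hence closed, whenever $U$ is closed; and iterating Lemma \ref{l:seemstrivial} gives $(f^{n}(U))_{\sharp}=f_{\sharp}^{n}(U_{\sharp})$. Lemma \ref{l:haushelp}, applied to the closed pairs $A,U$ and $A,f^{n}(U)$, then yields $d_{H}(A,U)=\Delta_{H}(A_{\sharp},U_{\sharp})$ and $d_{H}(A,f^{n}(U))=\Delta_{H}(A_{\sharp},f_{\sharp}^{n}(U_{\sharp}))$. Substituting these identities, the scalar inequality in the definition of exponential stability for $A$ becomes literally the inequality in the definition for $A_{\sharp}$, and combining this with the neighborhood dictionary of the previous paragraph closes both directions.

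I do not expect a serious obstacle: the content lies entirely in the three lemmas already proved, the only new ingredient being the elementary formula $\Delta(\mu,A_{\sharp})=\sup_{x\in\text{supp}(\mu)}d(x,A)$, which upgrades Lemma \ref{l:help} from open to closed neighborhoods. The one point warranting a line of care is verifying that the closed $\delta$-neighborhood of $A_{\sharp}$ in the \emph{dynamical} metric coincides with $U_{\sharp}$ and not some strictly larger or smaller set; this is precisely what the supp-distance formula guarantees, and it is where the dynamical topology (rather than the weak* topology, for which the analogue fails) is essential.
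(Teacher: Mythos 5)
Your proposal is correct and follows essentially the same route as the paper, whose proof consists of the single sentence that the claim follows from the definition together with Lemma \ref{l:help} and Lemma \ref{l:haushelp}. Your only genuine addition is the formula $\Delta(\mu,A_{\sharp})=\sup_{x\in\mathrm{supp}(\mu)}d(x,A)$ and the resulting identification of $U_{\sharp}$ with the \emph{closed} $\delta$-neighborhood of $A_{\sharp}$, which explicitly patches the open-versus-closed mismatch between Lemma \ref{l:help} and the definition of exponential stability that the paper leaves implicit.
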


\begin{proof}
It follows directly from the definition of exponential stability, Lemma
\ref{l:help} and Lemma \ref{l:haushelp}.
\end{proof}

A similar claim can be also proven for Nekhoroshev stability (see e.g.
\cite{Nekhoroshev:77}).

We now give an example which shows that the claims above do not hold for
uniform or weak* topology. In other words, we show that uniform or weak*
topology on $P(X)$ are not the right topologies for generalizing notions of
stability to spaces of measures.

\begin{example}
Assume that $f$ is a dynamical system with one attracting sink $x$, and one
source $y$. Such a dynamical system can be constructed on a, say, 2-sphere,
with the sink and the source being the poles. We denote by $\delta_{x}$ and
$\delta_{y}$ the probability measures concentrated on $x$, $y$ respectively,
and we define $\mu_{\varepsilon}=(1-\varepsilon)\delta_{x}+\varepsilon
\delta_{y}$, for a given $\varepsilon>0\,$. Now, for small $\varepsilon>0$,
$\mu_{\varepsilon}$ is arbitrarily u-close and w-close to $\delta_{x}$ (but
not d-close!). However, $\mu_{\varepsilon}$ is a $f_{\sharp}$-fixed point.\ We
conclude that $\{\delta_{x}\}=\{x\}_{\sharp}$ is neither an attractor, nor
asymptotically stable set for $f_{\sharp}$ in weak* or uniform topology on
$P(X)$.
\end{example}

Now, we naturally generalize the notions of stability to $f$-invariant
measures, or more generally to $f_{\sharp}$-invariant sets of measures. (Note
that if $\mu$ is an $f$-invariant measure, then $\mu$ is $f_{\sharp} $-fixed
point, and $\{\mu\}$ is a $f_{\sharp}$-invariant closed set.)

\begin{definition}
Suppose $f$ is a continuous function on $X$.

We say that a $f$-invariant measure $\mu$ is Lyapunov stable, asymptotically
stable, an attractor, or exponentially stable, if $\{\mu\}$ is Lyapunov
stable, asymptotically stable, an attractor, or exponentially stable
respectively, with respect to $f_{\sharp}$ and the dynamical topology on
$P(X)$.

More generally, if $\mathcal{A}\mathbb{\ }$is a $f_{\sharp}$-invariant,
d-closed set of measures, we say that it is Lyapunov stable, asymptotically
stable, an attractor, or exponentially stable, if it is so with respect to
$f_{\sharp}$ and the dynamical topology on $P(X)$.
\end{definition}

An information on stability of invariant measures gives much more information
on dynamics in a neighborhood of a set (which can be the support of an
invariant measure), then the information on stability of invariant sets. The
following example illustrates that claim.

\begin{example}
Let $X=%
\mathbb{R}
^{2}/%
\mathbb{Z}
^{2}$ be a 2-torus, and we define a function $f(x,y)=(x^{\prime},y^{\prime})$
with
\begin{align*}
x^{\prime}  & =x+y,\\
y^{\prime}  & =y,
\end{align*}
(a standard map with $k=0$). Let $A$ be the circle $y=0$. Then $A$ is a closed
invariant set (all points on $A$ are fixed), and also Lyapunov stable. Let
$\lambda$ be the Lebesgue measure on $A$, and $\nu$ any probability measure
supported on $A$, different from $\lambda$. Both $\lambda$, $\nu$ are
$f$-invariant, and supported on a Lyapunov stable set $A$. Now, one can check
that $\lambda$ is Lyapunov stable, and $\nu$ is not. This reflects the fact
that the rotation in the vicinity of $A$ is "with uniform speed".\ Similar
examples could be constructed for attractors and exponentially stable sets.
\end{example}

\section{The optimal transport problem}

In this section we define the {\textbf{$\infty$-}}optimal transport problem,
and show that the metric on the set of measures induced by the
{\textbf{$\infty$-}}optimal transport problem is equal to the dynamical
$\Delta$ metric. We then deduce that the {\textbf{$\infty$-}}optimal transport
problem generates a different structure on the set of of measures than the
$p${\textbf{-}}optimal transport problem for any $1\leq p<\infty$.

\begin{definition}
Let $X$ be a metric space with a metric $d$, and $\mu$, $\nu$ be two Borel
probability measures on $X$.

The set of all transports $T(\mu,\nu)$ of measures $\mu,\nu$ is the set of all
Borel probability measures $\gamma$ on $X\times X$, such that $\pi_{1\sharp
}\gamma=\mu$, $\pi_{2\sharp}\gamma=\nu$, where $\pi_{1}$, $\pi_{2}$ are
projections of $X\times X$ to the first, resp. second variable.

Distance of measures $\mu,\nu$ with respect to a transport $\gamma\in
T(\mu,\nu)$ is defined with
\[
\Delta_{\gamma}(\mu,\nu)=\inf\{\sup\{d(x,y)\,|(x,y)\in A\}\,|\,A\subseteq
X\times X\text{ measurable, and }\gamma(A)=1\}\text{.}%
\]

The $\infty$-Wasserstein distance of two measures $\mu,\nu$ is defined with
\begin{equation}
\Delta_{\infty}(\mu,\nu)=\inf\{\Delta_{\gamma}(\mu,\nu)\,|\,\gamma\in
T(\mu,\nu)\}\text{. }\label{iw}%
\end{equation}

If the minimum in (\ref{iw})\ is attained, then any measure $\gamma$ for which
$\Delta_{\gamma}(\mu,\nu)=\Delta_{\infty}(\mu,\nu)$ is called a solution of
the optimal transport problem with respect to the measures $\mu,\nu$.
\end{definition}

Now we prove that the $\infty$-Wasserstein distance is the same as the
dynamical metric $\Delta$.

\begin{proposition}
Given two probability measures $\mu,\nu$, then $\Delta_{\infty}(\mu
,\nu)=\Delta(\mu,\nu)$.
\end{proposition}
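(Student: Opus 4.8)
The plan is to prove the two inequalities $\Delta_{\infty}(\mu,\nu)\leq\Delta(\mu,\nu)$ and $\Delta(\mu,\nu)\leq\Delta_{\infty}(\mu,\nu)$ separately. The bridge between the two descriptions is the observation that a pair of maps $f,g:I\to X$ representing $\mu,\nu$ carries essentially the same information as a transport $\gamma$ on $X\times X$: one passes from the pair to a coupling by pushing $\lambda$ forward under $a\mapsto(f(a),g(a))$, and one passes back from a coupling to a pair by realizing $\gamma$ itself as a pushforward of $\lambda$ via Corollary \ref{t:big}. The only genuine subtlety is that $D(f,g)$ is a true pointwise supremum whereas $\Delta_{\gamma}$ is a $\gamma$-essential supremum, and reconciling these is where the work lies.

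For $\Delta_{\infty}\leq\Delta$: given measurable $f,g:I\to X$ with $\mu=f_{\sharp}\lambda$, $\nu=g_{\sharp}\lambda$, set $\gamma=(f,g)_{\sharp}\lambda$, the pushforward of $\lambda$ under the measurable map $a\mapsto(f(a),g(a))$. Then $\pi_{1\sharp}\gamma=\mu$ and $\pi_{2\sharp}\gamma=\nu$, so $\gamma\in T(\mu,\nu)$. Taking the closed (hence measurable) set $A=\{(x,y):d(x,y)\leq D(f,g)\}$, we have $\gamma(A)=\lambda\{a:d(f(a),g(a))\leq D(f,g)\}=1$, since $d(f(a),g(a))\leq D(f,g)$ for every $a$. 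Hence $\Delta_{\gamma}(\mu,\nu)\leq\sup_{(x,y)\in A}d(x,y)\leq D(f,g)$, so $\Delta_{\infty}(\mu,\nu)\leq D(f,g)$; taking the infimum over all admissible $f,g$ gives the inequality.

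For $\Delta\leq\Delta_{\infty}$: fix a transport $\gamma\in T(\mu,\nu)$ and $\varepsilon>0$, and choose a measurable $A\subseteq X\times X$ with $\gamma(A)=1$ and $\sup_{(x,y)\in A}d(x,y)\leq\Delta_{\gamma}(\mu,\nu)+\varepsilon$. Since $X\times X$ is a compact metric space, Corollary \ref{t:big} yields a measurable $h:I\to X\times X$ with $\gamma=h_{\sharp}\lambda$; writing $f=\pi_{1}\circ h$ and $g=\pi_{2}\circ h$ gives $f_{\sharp}\lambda=\pi_{1\sharp}\gamma=\mu$ and $g_{\sharp}\lambda=\nu$. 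The set $J=h^{-1}(A)$ has $\lambda(J)=\gamma(A)=1$, and for $a\in J$ we have $d(f(a),g(a))\leq\Delta_{\gamma}(\mu,\nu)+\varepsilon$, i.e. $D_{J}(f,g)\leq\Delta_{\gamma}(\mu,\nu)+\varepsilon$. This is exactly the situation of Lemma \ref{l:topA}(i): it produces $\widetilde{f},\widetilde{g}$ with the same pushforwards $\mu,\nu$ and $D(\widetilde{f},\widetilde{g})\leq D_{J}(f,g)$. Consequently $\Delta(\mu,\nu)\leq D(\widetilde{f},\widetilde{g})\leq\Delta_{\gamma}(\mu,\nu)+\varepsilon$; letting $\varepsilon\to0$ and taking the infimum over $\gamma\in T(\mu,\nu)$ yields $\Delta(\mu,\nu)\leq\Delta_{\infty}(\mu,\nu)$.

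The main obstacle is precisely the second direction's essential-supremum-versus-supremum gap: a raw realization $h=(f,g)$ of the coupling may place $f(a)$ and $g(a)$ arbitrarily far apart on a $\lambda$-null set, which would spoil the pointwise distance $D(f,g)$ even when $\Delta_{\gamma}$ is small. Lemma \ref{l:topA}(i) is the tool that surgically removes this null set (redefining $f,g$ to be constant off $J$) without disturbing the marginals, and I expect invoking it after realizing $\gamma$ via Corollary \ref{t:big} to be the crux of the argument. The remaining steps---verifying the marginals of the pushforward coupling and the measurability of $A$---are routine.
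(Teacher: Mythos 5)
Your proof is correct and follows essentially the same route as the paper: the pushforward coupling $(f,g)_{\sharp}\lambda$ for one inequality, and Corollary \ref{t:big} applied to $\gamma$ on $X\times X$ followed by Lemma \ref{l:topA} for the other. Your choice of the closed set $A=\{(x,y):d(x,y)\leq D(f,g)\}$ in the first direction is in fact a slightly cleaner witness than the paper's $(f\times g)(I)$, whose Borel measurability is not immediate, but this is a cosmetic difference rather than a different argument.
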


\begin{proof}
\textit{Claim:\ }$\Delta_{\infty}(\mu,\nu)\leq\Delta(\mu,\nu)$. Choose any
measurable $f,g:I\rightarrow X$, $f_{\sharp}\lambda=\mu$, $\ g_{\sharp}%
\lambda=\nu$. Then we define $\gamma:=(f\times g)_{\sharp}\lambda$, where
$(f\times g)(x):=(f(x),g(x))$. The assumptions imply that $\gamma\in T(\mu
,\nu)$, and then
\[
\Delta_{\infty}(\mu,\nu)\leq\Delta_{\gamma}(\mu,\nu)\leq\sup
\{d(x,y)\,|\,(x,y)\in(f\times g)(I)\}=D(f,g)\text{.}%
\]

Now, it is sufficient to take infimum of the right hand side of the equation
above over all $f,g$, such that $f_{\sharp}\lambda=\mu$, $\ g_{\sharp}%
\lambda=\nu$.\smallskip

\textit{Claim:\ }$\Delta(\mu,\nu)\leq\Delta_{\infty}(\mu,\nu)$. Suppose
$\gamma\in T(\mu,\nu)$. By applying Theorem \ref{t:big}, we can find a
function $h:I\rightarrow X\times X$, such that $\gamma=h_{\sharp}\lambda$. We
define \thinspace$f=\pi_{1}\circ h$, $g=\pi_{2}\circ h$, and then
$\mu=f_{\sharp}\lambda$, $\nu=g_{\sharp}\lambda$. Let $A\subseteq X\times X$
be any set such that $\gamma(A)=1$, and let $J=h^{-1}(A)$. Now definitions and
Lemma \ref{l:topA} imply that
\begin{align*}
\Delta(\mu,\nu)  & \leq D_{J}(f,g)=\sup\{d(x,y)\,|\,(x,y)\in h(J)\}\\
& =\sup\{d(x,y)\,|\,(x,y)\in A\}\text{.}%
\end{align*}

The proof is completed by taking the infimum of the right hand side of the
equation above over all $A,\gamma$.
\end{proof}

\begin{corollary}
Assume that $X$ has at least two elements. Then $\infty$-Wasserstein metric
$\Delta_{\infty}=\Delta$ is neither uniformly nor topologically equivalent to
any of the p-Wasserstein metrices $W_{p}$ for $1\leq p<\infty$.
\end{corollary}

\begin{proof}
For any $p$, $1\leq p<\infty$, the metric $W_{p}$ is uniformly equivalent to
the Prokhorov metric on $P(X)$ (see e.g. \cite{Gibbs:02}), hence topologically
equivalent to the weak* topology. Corollary \ref{c:horprop},\ (i) implies the claim.
\end{proof}

We close this Section with a proof that the {\textbf{$\infty$-}}optimal
transport problem\thinspace has a solution.

\begin{proposition}
There exists a measure $\gamma_{0}\in T(\mu,\nu)$, for which the minimum of
the expression (\ref{iw}) is attained.
\end{proposition}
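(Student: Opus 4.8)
The plan is to establish existence of a minimizer via a direct method: take a minimizing sequence of transports, extract a weak* convergent subsequence using compactness, and verify that the limit is both a valid transport and actually attains the infimum $\Delta_\infty(\mu,\nu)$. The overall structure mirrors standard existence proofs in optimal transport theory, but the nonstandard cost $\Delta_\gamma$ (an essential supremum of distances, rather than an integral) will require care.

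First I would let $\gamma_n\in T(\mu,\nu)$ be a sequence with $\Delta_{\gamma_n}(\mu,\nu)\to\Delta_\infty(\mu,\nu)$. Since $X$ is a compact metric space, $X\times X$ is compact, so $P(X\times X)$ is weak* compact (sequentially compact, as it is metrizable); hence I can extract a subsequence, still denoted $\gamma_n$, that w-converges to some $\gamma_0\in P(X\times X)$. The constraint set $T(\mu,\nu)$ is weak* closed: the maps $\gamma\mapsto\pi_{1\sharp}\gamma$ and $\gamma\mapsto\pi_{2\sharp}\gamma$ are weak* continuous (because $\pi_1,\pi_2$ are continuous, so for $h\in C(X)$ one has $\int h\,d(\pi_{1\sharp}\gamma)=\int h\circ\pi_1\,d\gamma$, and $h\circ\pi_1\in C(X\times X)$), so the marginal conditions $\pi_{1\sharp}\gamma_0=\mu$ and $\pi_{2\sharp}\gamma_0=\nu$ pass to the limit. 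Thus $\gamma_0\in T(\mu,\nu)$.

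The main obstacle is lower semicontinuity of the cost functional $\gamma\mapsto\Delta_\gamma(\mu,\nu)$ under weak* convergence; unlike the integral cost $\int d(x,y)^p\,d\gamma$, the essential-supremum-type cost is not obviously lower semicontinuous, and in fact the natural expectation is that it should behave well on the limit precisely because the minimizers concentrate on closed sets. The key geometric fact I would exploit is that $\Delta_\gamma(\mu,\nu)\le r$ holds if and only if $\gamma$ is supported on the closed set $K_r=\{(x,y)\in X\times X\,|\,d(x,y)\le r\}$ (here the infimum over sets $A$ with $\gamma(A)=1$ is attained by $\operatorname{supp}\gamma$, and $\gamma$ gives full mass to $K_r$ exactly when $\operatorname{supp}\gamma\subseteq K_r$). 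This reformulation converts the problem into one about support containment, which is stable under weak* limits.

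Concretely, I would set $r=\Delta_\infty(\mu,\nu)$ and show $\gamma_0(K_r)=1$. Fix any $\varepsilon>0$; for all large $n$ we have $\Delta_{\gamma_n}(\mu,\nu)<r+\varepsilon$, so $\gamma_n(K_{r+\varepsilon})=1$, i.e.\ $\gamma_n(U)=0$ where $U=(K_{r+\varepsilon})^c$ is open. Since $U$ is open and $\gamma_n$ w-converges to $\gamma_0$, Proposition \ref{p:basic}, (i) gives $\gamma_0(U)\le\liminf\gamma_n(U)=0$, hence $\gamma_0(K_{r+\varepsilon})=1$. As $K_r=\bigcap_{\varepsilon>0}K_{r+\varepsilon}$ (a decreasing intersection of closed sets, using continuity of $d$), continuity of the measure $\gamma_0$ from above yields $\gamma_0(K_r)=1$. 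This shows $\Delta_{\gamma_0}(\mu,\nu)\le r=\Delta_\infty(\mu,\nu)$, and since $\gamma_0\in T(\mu,\nu)$ forces the reverse inequality $\Delta_{\gamma_0}(\mu,\nu)\ge\Delta_\infty(\mu,\nu)$ by definition of the infimum, equality holds and $\gamma_0$ is the desired solution.
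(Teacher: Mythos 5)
Your proof is correct, and it follows the same overall skeleton as the paper's (minimizing sequence, weak* sequential compactness of $P(X\times X)$, passage to the limit), but the crucial semicontinuity step is handled by a genuinely different mechanism. The paper proves full weak* lower semicontinuity of $\gamma\mapsto\Delta_{\gamma}(\mu,\nu)$ by first identifying $\Delta_{\gamma}(\mu,\nu)$ with $\sup\{d(x,y)\,|\,(x,y)\in\mathrm{supp}\,\gamma\}$ and then showing that every point of $\mathrm{supp}\,\gamma_{0}$ is approximated by points of $\mathrm{supp}\,\gamma_{n}$; you instead prove only closedness of the sublevel set $\{\gamma\,|\,\Delta_{\gamma}(\mu,\nu)\leq r\}=\{\gamma\,|\,\gamma(K_{r})=1\}$ at the single relevant level $r=\Delta_{\infty}(\mu,\nu)$, using Proposition \ref{p:basic}, (i) applied to the open set $K_{r+\varepsilon}^{c}$ and continuity of the measure from above along $K_{r}=\bigcap_{\varepsilon>0}K_{r+\varepsilon}$. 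The two are of course equivalent in spirit (lower semicontinuity is closedness of all sublevel sets), but your version is slightly more economical: it avoids having to justify that the infimum defining $\Delta_{\gamma}$ is attained on $\mathrm{supp}\,\gamma$, and it makes explicit a point the paper passes over silently, namely that the marginal constraints $\pi_{1\sharp}\gamma_{0}=\mu$, $\pi_{2\sharp}\gamma_{0}=\nu$ survive the weak* limit, so that $T(\mu,\nu)$ is weak* closed. The paper's stronger statement (semicontinuity everywhere) is a reusable fact in its own right, whereas yours is tailored to the existence claim; both are complete proofs of the Proposition.
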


\begin{proof}
For given $\mu,\nu\in P(X)$, we first prove that the map $\gamma
\longmapsto\Delta_{\gamma}(\mu,\nu)$ is w-lower semi continuous. Since
$d:X\times X\rightarrow%
\mathbb{R}
$ is continuous, $\{\sup\{d(x,y)\,|(x,y)\in A\}=\{\sup\{d(x,y)\,|(x,y)\in
Cl(A)\}$, and
\begin{equation}
\Delta_{\gamma}(\mu,\nu)=\{\sup\{d(x,y)\,|\,(x,y)\in\text{supp }%
\gamma\}.\label{e:tran1}%
\end{equation}
Choose any sequence of measures $\gamma_{n}\rightarrow\gamma_{0}$, convergent
in w-topology, as $n\rightarrow\infty$. Now for any $\varepsilon>0$, and any
$(x,y)\in$ supp $\gamma_{0}$, let $B_{\varepsilon}$ be an\ open $\varepsilon
$-ball around $(x,y)$, and then by definition of the support $\gamma
_{0}(B_{\varepsilon})\,>0$. Since $\lim\inf\gamma_{n}(B_{\varepsilon}%
)\geq\gamma_{0}(B_{\varepsilon})\,>0$, (\ref{e:tran1}) implies that there is
$n_{0}$ such that for all $n\geq n_{0}$, $\gamma_{n}(B_{\varepsilon})>0$. For
each $n\geq n_{0}$, there is a point $(x^{\prime},y^{\prime})\in$ supp
$\gamma_{n}\cap B_{\varepsilon}$, and then because of (\ref{e:tran1}),
$\Delta_{\gamma_{n}}(\mu,\nu)\geq d(x^{\prime},y^{\prime})\geq
d(x,y)-2\varepsilon$. Since $\varepsilon$ was arbitrary, we conclude that
$\lim\inf\Delta_{\gamma_{n}}(\mu,\nu)\geq d(x,y)$. Since $(x,y)$ $\in$ supp
$\gamma_{0}$ was arbitrary, we get $\lim\inf\Delta_{\gamma_{n}}(\mu,\nu
)\geq\Delta_{\gamma_{0}}(\mu,\nu)$.

Suppose now $\gamma_{n}\in T(\mu,\nu)$ is a sequence of transports such that
$\Delta_{\gamma_{n}}(\mu,\nu)\leq\Delta_{\infty}(\mu,\nu)+1/n$. Now
$\gamma_{n}$ has a w-convergent subseqence, which converges to another
transport $\gamma_{0}$. By definition $\Delta_{\gamma_{0}}(\mu,\nu)\geq
\Delta_{\infty}(\mu,\nu)$, and because of w-lower semicontinuity,
$\Delta_{\gamma_{0}}(\mu,\nu)\leq\Delta_{\infty}(\mu,\nu)\,$, so $\gamma_{0}$
is the required measure.
\end{proof}

\begin{corollary}
For given probability measures $\mu,\nu$, there exist measurable functions
$f,g:I\rightarrow X$, $f_{\sharp}\lambda=\mu$, $g_{\sharp}\lambda=\nu$, such
that $\Delta(\mu,\nu)=D(f,g)$.
\end{corollary}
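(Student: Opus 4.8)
The plan is to extract the required functions from the optimal transport $\gamma_{0}$ produced in the preceding Proposition, essentially re-running the argument from the proof of the inequality $\Delta(\mu,\nu)\leq\Delta_{\infty}(\mu,\nu)$, but now applied to a genuine minimizer, so that every inequality appearing there collapses to an equality. Concretely, the preceding Proposition gives a transport $\gamma_{0}\in T(\mu,\nu)$ attaining the infimum in (\ref{iw}), and the Proposition identifying the two metrics gives $\Delta_{\gamma_{0}}(\mu,\nu)=\Delta_{\infty}(\mu,\nu)=\Delta(\mu,\nu)$. Moreover, formula (\ref{e:tran1}) rewrites this minimal value as $\Delta_{\gamma_{0}}(\mu,\nu)=\sup\{d(x,y)\,|\,(x,y)\in\operatorname{supp}\gamma_{0}\}$, which is the quantity I will need to match with $D$.

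Next I would apply Theorem \ref{t:big} to $\gamma_{0}$ to obtain a measurable $h:I\rightarrow X\times X$ with $\gamma_{0}=h_{\sharp}\lambda$, and set $f=\pi_{1}\circ h$ and $g=\pi_{2}\circ h$, so that $\mu=f_{\sharp}\lambda$ and $\nu=g_{\sharp}\lambda$. Writing $A=\operatorname{supp}\gamma_{0}$ and $J=h^{-1}(A)$, the facts $\gamma_{0}(A)=1$ and $\gamma_{0}=h_{\sharp}\lambda$ force $\lambda(J)=1$. For every $t\in J$ one has $(f(t),g(t))=h(t)\in A$, so $D_{J}(f,g)\leq\sup\{d(x,y)\,|\,(x,y)\in A\}$, and this last supremum equals $\Delta(\mu,\nu)$ by the rewriting above.

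The one point that genuinely needs care is that $D(f,g)$ is a supremum over all of $I$, not an essential supremum, so the bound just obtained on the full-measure set $J$ does not by itself control $D(f,g)$: the function $h$ may misbehave on the $\lambda$-null set $I\setminus J$. This is exactly the situation Lemma \ref{l:topA}(i) is designed to handle. Applying it to $f$, $g$ and the full-measure set $J$ yields measurable $\widetilde{f},\widetilde{g}:I\rightarrow X$ with $\widetilde{f}_{\sharp}\lambda=\mu$, $\widetilde{g}_{\sharp}\lambda=\nu$, and $D(\widetilde{f},\widetilde{g})\leq D_{J}(f,g)\leq\Delta(\mu,\nu)$.

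Finally, the reverse inequality $\Delta(\mu,\nu)\leq D(\widetilde{f},\widetilde{g})$ holds by the very definition of $\Delta$ as an infimum over all representing pairs, since $\widetilde{f},\widetilde{g}$ push $\lambda$ forward to $\mu,\nu$. Combining the two inequalities gives $D(\widetilde{f},\widetilde{g})=\Delta(\mu,\nu)$, so $\widetilde{f},\widetilde{g}$ are the desired functions. I do not anticipate a serious obstacle: all the analytic content is carried by the existence Proposition and by (\ref{e:tran1}), and the remaining work is bookkeeping, the only subtlety being the passage from an almost-everywhere bound to a true supremum bound, which is packaged cleanly in Lemma \ref{l:topA}.
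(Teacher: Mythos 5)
Your proof is correct and follows essentially the same route as the paper: realize the optimal transport $\gamma_{0}$ from the preceding Proposition as $h_{\sharp}\lambda$ via Corollary \ref{t:big} and take the coordinate projections. In fact you are more careful than the paper's one-line proof, which declares $f=\pi_{1}\circ h$, $g=\pi_{2}\circ h$ to be "the required functions" without addressing that $h$ may send a $\lambda$-null set outside $\operatorname{supp}\gamma_{0}$ (so that $D(f,g)$, being a genuine supremum, could exceed $\Delta(\mu,\nu)$); your repair via $D_{J}$ and Lemma \ref{l:topA}(i) is exactly the step needed to close that gap.
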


\begin{proof}
If $\gamma_{0}\in T(\mu,\nu)$ is the measure for which (\ref{iw}) is minimal,
and $h:I\rightarrow X\times X$, $h$ a measurable function such that
$\gamma_{0}=h_{\sharp}\lambda$, then $f=\pi_{1}\circ h$, $g=\pi_{2}\circ h$
are required functions.
\end{proof}

\section{Appendix:\ The proof of Lemma \ref{l:claim2}}

In this Appendix we prove the fact from the proof of Theorem \ref{t:char},
which is essentially combinatorial in character. Assume that an integer $m$,
any family of measurable subsets $(B_{1},B_{2},...,B_{m})$ of $X$, and a
measure $\xi$ are given. We first introduce some notation and definitions. Let
$\mathcal{P}(\{1,...,m\})$ be the set of all subsets of $\{1,...,m\}$, and for
a nonempty $\varphi\in\mathcal{P}(\{1,...,m\})$,
\[
B_{\varphi}:=B_{i_{1}}\cap B_{i_{2}}\cap...\cap B_{i_{k}}\cap B_{j_{1}}%
^{C}\cap B_{j_{2}}^{C}\cap...\cap B_{j_{m-k}}^{C},
\]
where $\varphi=\{i_{1},i_{2},...,i_{k}\}$, $\varphi^{C}=\{j_{1},j_{2}%
,...,j_{m-k}\}$. Let $|\varphi|$ denote the cardinal number of $\varphi$. We
define the \textit{arrangement} of $(B_{1},B_{2},...,B_{m})$ to be $\rho
(B_{1},B_{2},...,B_{m})=\sum_{k=1}^{m}\rho_{k}$, where%
\[
\rho_{k}=|\{\varphi\subseteq\{1,...,m\}:|\varphi|=k,\;\text{and }%
\xi(B_{\varphi})>0\}|
\]
(The arrangement $\rho$ depends also on the measure $\xi$, which we omit from
the argument of $\rho$ because it is always clear which measure is being
considered). The $k^{th}$ arrangement $\rho_{k}$ shows how many sets of
intersections of exactly $k$ sets $B_{i}$ have positive measure $\xi$. The
arrangement\ $\rho$ is the number of such sets $B_{\varphi}$ for any $k>0$,
such that they have positive measure $\xi$, and then%
\begin{equation}
\rho\leq2^{m}-1.\label{arrgm}%
\end{equation}

Now we prove Lemma \ref{l:claim2}.

\begin{proof}
Let $\xi$\ be a measure on $X$\ (positive, not necessarily a normed one),
$x_{1},x_{2},...,x_{m}$, nonnegative real numbers, and $B_{1},B_{2},...,B_{m}%
$\ measurable subsets of $X$, such that (\ref{com1}), (\ref{com2}) hold.

We prove the claim inductively, with\ respect to two integers $m$, $\rho$,
where $m$ is the number of sets $B_{i}$, $i=1,...,m$, and $\rho$ their
arrangement $\rho=\rho(B_{1},B_{2},...,B_{m})$.

The basis of the induction is the case $m=1$, $\rho=1$. Then $\nu_{1}:=\xi$
clearly satisfies (\ref{com3.2}), (\ref{com3.3}), (\ref{com3.4}).\medskip

Now, for a given $m$, $\rho$, assume that the claim is true for any family of
measurable sets $(B_{i}^{\prime})_{i=1,...,m^{\prime}}$, where $m^{\prime}\leq
m$, $\rho^{\prime}=\rho(B_{1}^{\prime},....,B_{m^{\prime}}^{\prime})\leq\rho$,
and either $m^{\prime}<m$ or $\rho^{\prime}<\rho$. Choose a family of
measurable sets $B_{1},B_{2},...,B_{m}$, with the arrangement $\rho$, and
assume that $x_{1},x_{2},...,x_{m}$ are nonnegative real numbers satisfying
(\ref{com1}), (\ref{com2}). We analyze three cases:\medskip

\textit{Case 1:\ There exist }$\iota=\{i_{1},i_{2},...,i_{k}\}$, $1\leq k\leq
m-1$, \textit{such that there is equality in (\ref{com1}), namely that }%
\[
\xi(B_{i_{1}}\cup B_{i_{2}}\cup...\cup B_{i_{k}})=x_{i_{1}}+x_{i_{2}%
}+...+x_{i_{k}}\text{.}%
\]
Let $\{i_{k+1},i_{k+2},...,i_{m}\}=\{i_{1},i_{2},...,i_{k}\}^{C}$. We define%
\begin{align*}
B_{j}^{\prime}  & =B_{i_{j}}\\
x_{j}^{\prime}  & =x_{i_{j}}\\
\xi^{\prime}  & =\xi\,|\,(B_{i_{1}}\cup B_{i_{2}}\cup...\cup B_{i_{k}})\\
B_{j}^{\prime\prime}  & =B_{i_{k+j}}\backslash\,(B_{i_{1}}\cup B_{i_{2}}%
\cup...\cup B_{i_{k}})\\
x_{j}^{\prime\prime}  & =x_{i_{k+j}}\\
\xi^{\prime\prime}  & =\xi\,|\,(B_{i_{1}}\cup B_{i_{2}}\cup...\cup B_{i_{k}%
})^{C}\text{,}%
\end{align*}
where $\xi\,|\,A$ is the measure $\xi\,|\,A(Y)=\xi(A\cap Y)$. It is easy to
check that $(B_{j}^{\prime})$, $(x_{j}^{\prime})$, $\xi^{\prime}$;\ and also
$(B_{j}^{\prime\prime})$, $(x_{j}^{\prime\prime})$, $\xi^{\prime\prime}$
satisfy (\ref{com1}), and (\ref{com2}). By applying the inductive assumption,
we can find $\nu_{i}^{\prime},\,i=1,...,k$, and $\nu_{i}^{\prime\prime
},\,i=1,...,m-k$, satisfying (\ref{com3.2}), (\ref{com3.3}), (\ref{com3.4}).
We define
\[
\nu_{j}=\left\{
\begin{array}
[c]{ccc}%
v_{j}^{\prime} & \text{if} & j=1,...,k,\\
\nu_{j-k}^{\prime\prime} & \text{if} & j=k+1,...,m,
\end{array}
\right.
\]
and then $\nu_{1},....,\nu_{m}$ satisfy (\ref{com3.2}), (\ref{com3.3}),
(\ref{com3.4}).\medskip

\textit{Case 2:\ There exists }$k$\textit{, }$1\leq k\leq m$\textit{, such
that }$x_{k}=0$\textit{. \medskip}

We define $\nu_{k}=0$. We set $B_{j}^{\prime}=B_{j}$, $x_{j}^{\prime}=x_{j}$,
for $j=1,...,k-1$, and $B_{j}^{\prime}=B_{j+1}$, $x_{j}^{\prime}=x_{j+1}$, for
$j=k+1,...,m$. Then $(B_{j}^{\prime})$, $(x_{j}^{\prime})$, $\xi$ satisfy
(\ref{com1}), and (\ref{com2}), and by applying the inductive assumption we
find $\nu_{i}^{\prime},\,i=1,...,m-1$, satisfying (\ref{com3.2}),
(\ref{com3.3}), (\ref{com3.4}). Then we set $\nu_{j}=\nu_{j}^{\prime}$, for
$j=1,...,k-1$, and $\nu_{j}=\nu_{j-1}^{\prime}$, for $j=k+1,...,m$, and so
prove the claim.\medskip

\textit{Case 3:\ For all }$1\leq k\leq m-1$\textit{, and all subsets }%
$\{i_{1},i_{2},...,i_{k}\}\subset\{1,...,m\}$\textit{, }%
\[
\xi(B_{i_{1}}\cup B_{i_{2}}\cup...\cup B_{i_{k}})>x_{i_{1}}+x_{i_{2}%
}+...+x_{i_{k}}\text{,}%
\]
\textit{and also for all }$1\leq k\leq m$\textit{, }$x_{k}>0$\textit{.
\medskip}

We rewrite the relations (\ref{com1}), using the introduced notation:\ for all
$1\leq i_{1}<i_{2}<...<i_{k}\leq m$, $1\leq k\leq m-1$,%
\begin{equation}
\xi(B_{i_{1}}\cup B_{i_{2}}\cup...\cup B_{i_{k}})=%
{\displaystyle\sum\limits_{\varphi\cap\left\{  i_{1},i_{2},...,i_{k}\right\}
\neq\emptyset}}
\xi(B_{\varphi})>x_{i_{1}}+x_{i_{2}}+...+x_{i_{k}}\text{,}\label{vozi1}%
\end{equation}
where $\varphi$ is any nonempty $\varphi\subseteq\{1,...,m\}$. Let
\[
\delta_{\{i_{1},i_{2},...,i_{k}\}}=\xi(B_{i_{1}}\cup B_{i_{2}}\cup...\cup
B_{i_{k}})-x_{i_{1}}-x_{i_{2}}-...-x_{i_{k}}\text{.}%
\]
We now define
\begin{equation}
\delta=\min\{\delta_{\varphi}\text{, }1\leq|\varphi|\leq m-1\}\text{,
}\label{mini}%
\end{equation}
and let $\iota=\{j_{1},...,j_{k}\}$, $1\leq k\leq m-1$, be the set for which
(\ref{mini}) is minimal. The assumption of the Case 3 implies that $\delta>0$.
Choose any $\psi$ such that $\xi(B_{\psi})>0$. (Note that the assumptions of
the Case 3 do not apply that for all nonempty $\varphi$, $\xi(B_{\varphi}%
)>0$.) Let $p$ be any $p\in\psi$.

For some $\varepsilon$ (to be chosen later), we define:
\begin{align*}
B_{i}^{\prime}  & =B_{i}\text{,}\\
x_{i}^{\prime}  & =\left\{
\begin{array}
[c]{ccc}%
x_{i}-\varepsilon & \text{if} & i=p,\\
x_{i} & \,\text{if} & i\not =p,
\end{array}
\right. \\
\xi^{\prime}  & =\xi-\frac{\varepsilon}{\xi(B_{\psi})}(\xi\,|\,B_{\psi
})\text{,}%
\end{align*}
and then (\ref{vozi1}) implies that%
\[
\xi^{\prime}(B_{i_{1}}^{\prime}\cup B_{i_{2}}^{\prime}\cup...\cup B_{i_{k}%
}^{\prime})=\left\{
\begin{array}
[c]{ccc}%
\xi(B_{i_{1}}\cup B_{i_{2}}\cup...\cup B_{i_{k}})-\varepsilon & \text{if} &
(i_{1},...,i_{k})\cap\psi\not =\emptyset,\\
\xi(B_{i_{1}}\cup B_{i_{2}}\cup...\cup B_{i_{k}}) & \text{if} & (i_{1}%
,...,i_{k})\cap\psi=\emptyset.
\end{array}
\right.
\]
Let $\varepsilon_{0}$ be the maximal $\varepsilon$, such that for all nonempty
$\{i_{1},i_{2},...,i_{k}\}\subseteq\{1,...,m\}$, $1\leq k\leq m-1$,
\begin{equation}
\xi(B_{i_{1}}^{\prime}\cup B_{i_{2}}^{\prime}\cup...\cup B_{i_{k}}^{\prime
})\geq x_{i_{1}}^{\prime}+x_{i_{2}}^{\prime}+...+x_{i_{k}}^{\prime
}.\label{eqeq}%
\end{equation}
Such $\varepsilon_{0}>0$ exists because of $\delta>0$, and for $\varepsilon
=\varepsilon_{0}$, there is equality in (\ref{eqeq}) for some $\{i_{1}%
,i_{2},...,i_{k}\}$, $1\leq k\leq m-1$. We now fix $\varepsilon$ to be
\[
\varepsilon=\min\{\varepsilon_{0},x_{p},\xi(B_{\psi})\}\text{.}%
\]
The relation (\ref{eqeq}) and the fact that $\varepsilon\leq\varepsilon_{0}$
imply that $(B_{j}^{\prime})$, $(x_{j}^{\prime})$, $\xi^{\prime}$ satisfy
(\ref{com1});\ $\varepsilon\leq x_{p}$ implies that $(x_{i}^{\prime})$,
$i=1,...,m$ are nonnegative;\ and $\varepsilon\leq\xi(B_{\psi})$ implies that
$\xi^{\prime}$ is well defined (i.e. a nonnegative)\ measure.

By exchanging $(B_{j})$, $(x_{j})$, $\xi$ with $(B_{j}^{\prime})$,
$(x_{j}^{\prime})$, $\xi^{\prime}$ in (\ref{com2}) we subtract $\varepsilon$
from both sides, so (\ref{com2}) still holds for $(B_{j}^{\prime})$,
$(x_{j}^{\prime})$, $\xi^{\prime}$. We claim now that by applying the
assumption of the induction, we can construct $\nu_{1}^{\prime},\nu
_{2}^{\prime},...,\nu_{m}^{\prime}$, satisfying (\ref{com3.2}), (\ref{com3.3}%
), (\ref{com3.4}). We again discuss three cases: \medskip

\textit{Case 3.1. }$\varepsilon=\varepsilon_{0}$. Then for some $\{i_{1}%
,i_{2},...,i_{k}\}$, $1\leq k\leq m-1$, there is equality in
\textit{(\ref{com1}),} so the problem reduces to the problem analyzed in the
Case 1.\medskip

\textit{Case 3.2. }$\varepsilon=x_{p}$. Then $x_{p}^{\prime}=0$, so the
problem reduces to the problem analyzed in the Case 2.\medskip

\textit{Case 3.3. }$\varepsilon=\xi(B_{\psi})$. In this case, $\xi^{\prime
}(B_{\psi}^{\prime})=0$, so $\rho(B_{1}^{\prime},B_{2}^{\prime},...,B_{m}%
^{\prime})=\rho(B_{1},B_{2},...,B_{m})-1$, hence we can construct $\nu
_{1}^{\prime},\nu_{2}^{\prime},...,\nu_{m}^{\prime}$ inductively.\medskip

Now we define
\[
\nu_{i}=\left\{
\begin{array}
[c]{ccc}%
\nu_{i}^{\prime}+\frac{\varepsilon}{\xi(B_{\psi})}(\xi\,|\,B_{\psi}) &
\text{if} & i=p,\\
\nu_{i}^{\prime} & \,\text{if} & i\not =p.
\end{array}
\right.
\]
Then the construction implies that $\nu_{i}$, $i=1,...,m$ satisfy
(\ref{com3.2}), (\ref{com3.3}), (\ref{com3.4}).
\end{proof}

\end{document}